\numberwithin{equation}{section}
\newtheorem{prop}{Proposition}
\newtheorem*{nothm}{Theorem}
\newtheorem{thm}[prop]{Theorem}
\newtheorem{lem}[prop]{Lemma}
\newtheorem{cor}[prop]{Corollary}
\theoremstyle{definition}
\newtheorem{de}{Definition}
\newcommand{\1}{\boldsymbol{1}}
\newcommand{\aut}{\mathrm{Aut}}
\newcommand{\bk}{\vspace{5mm}\begin{center}
		$\ast\,\ast\,\ast$
	\end{center}\vspace{5mm}}
\newcommand{\C}{\mathbf{C}}
\newcommand{\cali}[1]{\mathcal{#1}}
\newcommand{\chx}[1]{\langle #1\rangle}
\newcommand{\G}{\mathbf{G}}
\newcommand{\got}[1]{\mathfrak{#1}}
\newcommand{\Hom}{\mathrm{Hom}}
\newcommand{\ide}{\mathbf{A}^{\times}}
\newcommand{\ilim}{\varinjlim}
\newcommand{\norm}[1]{\|#1\|}
\newcommand{\pair}[2]{\langle #1, #2 \rangle}
\newcommand{\plim}{\varprojlim}
\newcommand{\Q}{\mathbf{Q}}
\newcommand{\R}{\mathbf{R}}
\newcommand{\re}{\mathrm{Re}}
\newcommand{\Scr}[1]{\mathscr{#1}}
\newcommand{\spec}{\mathrm{Spec}}
\newcommand{\Z}{\mathbf{Z}}
\newcommand{\adefp}{\mathbf{A}_{f}^{p}}
\newcommand{\bmu}{\boldsymbol{\mu}}
\newcommand{\cts}{\mathrm{cts}}
\newcommand{\nm}{\mathrm{Nm}}
\newcommand{\chihaar}{\chi_{\mathrm{Haar}}}
\newcommand{\comment}[1]{}
\newcommand{\GrN}[2]{\underline{\got{l}^{-#1}\got{o}_K/\got{l}^{#2}}}
\newcommand{\grN}[2]{\got{l}^{-#1}\got{o}_K/\got{l}^{#2}}
\newcommand{\GrNvee}[2]{\underline{\got{l}^{-#1}\got{d}^{-1}/\got{l}^{#2}\got{d}^{-1}}}
\newcommand{\grNvee}[2]{\got{l}^{-#1}\got{d}^{-1}/\got{l}^{#2}\got{d}^{-1}}
\newcommand{\idefp}{(\mathbf{A}_{f}^{p})^\times}
\newcommand{\okt}{\mathfrak{o}_K^\times}
\newcommand{\ord}{\mathrm{ord}}
\newcommand{\op}{\got{o}_p}
\newcommand{\opn}{\got{o}_p/p^n}
\newcommand{\tr}{\mathrm{tr}}
\newcommand\blfootnote[1]{%
	\begingroup
	\renewcommand\thefootnote{}\footnote{#1}%
	\addtocounter{footnote}{-1}%
	\endgroup
}
\title{Note on $p$-adic Local Functional Equation}
\author{Luochen Zhao}
\date{}
\begin{document}
	\maketitle
	
	\blfootnote{\textit{Date}: Jun 5, 2022}
	\blfootnote{\textit{2020 Mathematics Subject Classification.} 11S40, 11S80 (primary); 22D15, 22D35, 43A70 (secondary).}
	\blfootnote{\textit{Key words and phrases.} Local functional equation, Fourier transform, group scheme, Cartier duality.}
	
	\begin{abstract}
		Given primes $\ell\ne p$, we record here a $p$-adic valued Fourier theory on a local field over $\mathbf{Q}_\ell$, which is developed under the perspective of group schemes. As an application, by substituting rigid analysis for complex analysis, it leads naturally to the $p$-adic local functional equation at $\ell$, which strongly resembles the complex one in Tate's thesis.
	\end{abstract}
	
	\section{Introduction}
	
	Let $K$ be a local field of characteristic 0, $f$ be a Schwartz-Bruhat function on $K$, and $c:K^\times \to \C^\times$ be a unitary continuous character. A classical result of Tate states that there is a local functional equation \cite[Theorem 2.4.1]{Ta50}
	\begin{align}
		\label{equation.local-fe-complex}
		\int_{K}f(x)c(x)|x|^s \frac{dx}{|x|} = \rho_{\C}(c|\cdot|^s) \int_{K}\hat{f}(x)c^{-1}(x)|x|^{1-s}\frac{dx}{|x|}
	\end{align}
	
	for $s\in \C$ with $0< \re(s)<1$. Here, $dx$ is an additive Haar measure on $K$, $|\cdot|$ is the character given by $d(ax) = |a|dx$ for all $a\in K^\times$, $\hat{f}$ is the Fourier transform of $f$, and $\rho_{\C}(c|\cdot|^s)$ is a meromorphic function on $\C$ independent of $f$. As an immediate conseqeunce, Tate deduced that the zeta integral $\int_{K}f(x)c(x)|x|^{s-1}dx$, a priori holomorphic on the half plane $\re(s)>0$, admits a meromorphic continuation to all of $\C$. 
	
	\vspace{3mm}
	
	The main aim of this note is to establish a $p$-adic valued analogue to \eqref{equation.local-fe-complex} when $K$ is a finite extension of $\Q_\ell$, where $\ell\ne p$. The organization is as follows: In §2, by employing Cartier duality in the manner of Katz, we shall establish the $p$-adic Fourier transform on $K$, which recovers a classical result of Schikhof. In §3, using elementary rigid analysis, we will discuss $p$-adic zeta integrals and their analytic nature. Eventually, following Tate's original argument, we will establish the $p$-adic local functional equation in this rigid analytic setting. For the benefit of the reader, we have included Appendix A, which presents some motivational computations for the $p$-adic local functional equation. In Appendix B, we briefly explain how the ideas developed in the main sections might shed some light on a hypothetical functional equation of Kubota-Leopoldt $p$-adic $L$-functions.
	
	\vspace{3mm}
	
	We now introduce some notation so as to state our main results below. With a fixed local field $K/\Q_\ell$ where $\ell\ne p$, let $\got{o}_K$ be the valuation ring of $K$, $\got{l}$ be its maximal ideal, $\pi\in \got{l}$ be a uniformizer, and write $q = \#\got{o}_K/(\pi)$. Also, denote by $\got{d} = (\pi^{\delta})$ the different ideal of $\got{o}_K/\Z_\ell$. Let $\C_p$ be the completion of a fixed algebraic closure of $\Q_p$, and denote by $|\cdot|_p$ the absolute value on $\C_p$ extending that on $\Z_p$, so $|p|_p = 1/p$. Let $\op$ be the ring of integers in $\C_p$. Finally, if $E$ is a subset of $K$, we denote by $\1_{E}$ the characteristic function $\1_E: K \to \{0,1\}$, i.e., $\1_E(x) = 1$ if $x\in E$ and $\1_E(x) = 0$ otherwise. 
	\begin{nothm}[A]
		Let $\cali{C}(K,\got{o}_p)$ be the Fr\'echet space of continuous functions from $K$ to $\got{o}_p$, and let $\cali{D}(K,\got{o}_p)$ be the continuous linear dual $\Hom^{\rm cts}_{\got{o}_p}(\cali{C}(K,\got{o}_p),\got{o}_p)$. By choosing a compatible system of primitive $\ell$-power roots of unity $\zeta = (\zeta_n)_{n\ge 0} \in \plim_{n\ge 0}\mu_{\ell^n}$, we have an embedding
		\begin{align*}
			\cali{D}(K,\got{o}_p) \hookrightarrow \cali{C}(K,\got{o}_p) \qquad \mu\mapsto \hat{\mu}_{\zeta},
		\end{align*}
		
		which identifies $\cali{D}(K,\got{o}_p)$ as uniformly continuous functions on $K$. Furthermore, if we let $dx$ denote the linear functional on compactly supported locally constant functions on $K$ defined by $dx(\1_{a+\got{l}^n}) = q^{-\delta/2-n}$, then for any $f\in \cali{C}(K,\got{o}_p)$ such that $\lim_{x\to \infty} f(x) = 0$, we have $fdx \in \cali{D}(K,\got{o}_p)$, and
		\begin{align*}
			(fdx)^{\wedge}_{\zeta}(\xi) = \int_{K} f(x)\zeta^{\tr(x\xi)} dx.
		\end{align*}
		
		Here, $\tr:K\to \Q_\ell$ is the trace map, and $\zeta^x$ is the function on $\Q_\ell$ such that if $x = \ell^t u$ with $u\in \Z_\ell$, $\zeta^x = \zeta_{-t}^u$ (we write $\zeta_n = 1$ for $n< 0$).
	\end{nothm}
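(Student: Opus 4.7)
The plan is to realize $K$ as an ind-profinite abelian group, perform Fourier analysis level-by-level via Cartier duality for finite $\ell$-groups, and then assemble the results by passing to topological limits.

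First, I would write $K = \ilim_n \got{l}^{-n}$ with each $\got{l}^{-n} = \plim_m \got{l}^{-n}/\got{l}^m$ profinite, so that $\cali{C}(K, \op) = \plim_n \cali{C}(\got{l}^{-n}, \op)$ is a Fr\'echet space built from $\op$-Banach factors, and dually $\cali{D}(K, \op) = \ilim_n \cali{D}(\got{l}^{-n}, \op)$ with $\cali{D}(\got{l}^{-n}, \op) = \plim_m \op[\got{l}^{-n}/\got{l}^m]$ the completed group algebra. At each finite quotient $G_{n,m} = \got{l}^{-n}/\got{l}^m$, whose order is an $\ell$-power and hence a unit in $\op$, I would verify that the pairing $(x, \xi) \mapsto \zeta^{\tr(x\xi)}$ realizes Cartier duality, identifying $G_{n,m}^\vee \cong \got{l}^{-m}\got{d}^{-1}/\got{l}^n\got{d}^{-1}$---this uses the characterizing property $\got{d}^{-1} = \{\xi \in K : \tr(\xi\got{o}_K) \subset \Z_\ell\}$. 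Standard Fourier inversion on finite abelian $\ell$-power groups then produces $\op[G_{n,m}] \xrightarrow{\sim} \cali{C}(G_{n,m}^\vee, \op)$, $\mu \mapsto (\xi \mapsto \mu(\chi_\xi))$ with $\chi_\xi(x) = \zeta^{\tr(x\xi)}$.

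Next, passing to $\plim_m$ with $n$ fixed: the transition maps $G_{n,m}^\vee \hookrightarrow G_{n,m+1}^\vee$ are inclusions whose union is $K/\got{l}^n\got{d}^{-1}$, so the resulting identification exhibits $\cali{D}(\got{l}^{-n}, \op)$ as the space of $\op$-valued functions on $K/\got{l}^n\got{d}^{-1}$---equivalently, continuous functions on $K$ invariant under translation by $\got{l}^n\got{d}^{-1}$. Taking $\ilim_n$ then produces the desired embedding $\cali{D}(K, \op) \hookrightarrow \cali{C}(K, \op)$, whose image is exactly the uniformly continuous functions (those invariant under some open compact subgroup), with injectivity following from the finite-level isomorphisms.

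For the integral formula, when $f \in \cali{C}(K, \op)$ decays to zero at infinity, I would show that $g \mapsto \int_K f(x)g(x)\,dx$---a priori defined on compactly supported locally constant $g$ via the prescribed $dx$---extends by continuity to all $g \in \cali{C}(K, \op)$, since the decay of $f$ compensates for the lack of compact support. Unwinding the definition $\hat{\mu}_\zeta(\xi) = \mu(\chi_\xi)$ for $\mu = fdx$ then yields the stated formula directly. The main technical hurdle is the topological bookkeeping: confirming that the continuous dual of the Fr\'echet space $\cali{C}(K, \op)$ is indeed the strict direct limit $\ilim_n \cali{D}(\got{l}^{-n}, \op)$, and that the Fourier transform commutes with both the projective and inductive limits involved.
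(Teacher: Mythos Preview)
Your overall architecture---Cartier duality at finite levels, then passage to limits---matches the paper's, but there is a genuine gap in the limit step. You assert that $\cali{D}(K,\got{o}_p) = \ilim_n \cali{D}(\got{l}^{-n},\got{o}_p)$, i.e.\ that every continuous $\got{o}_p$-linear functional on $\cali{C}(K,\got{o}_p)$ factors through restriction to some $\got{l}^{-n}$. This is false. Take $L = \sum_{n\ge 0} p^n \delta_{\pi^{-n}}$: it is continuous (for each $m$, the condition $L(f)\in p^m\got{o}_p$ depends only on $f(\pi^0),\dots,f(\pi^{-(m-1)})$ modulo suitable powers of $p$), yet $L(\1_{\pi^{-r-1}+\got{o}_K}) = p^{r+1}\ne 0$ for every $r$, so $L$ does not vanish on the kernel of restriction to $\got{l}^{-r}$ for any $r$. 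The continuous dual of this Fr\'echet space is genuinely larger than the strict inductive limit of the levelwise duals.

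Correspondingly, your description of the image as ``functions invariant under some open compact subgroup'' is too small: that is $\ilim_N \cali{C}(K/\got{l}^N,\got{o}_p)$, the locally constant functions with a uniform modulus, whereas the actual image is the space of uniformly continuous functions in the metric sense, namely $\plim_n \ilim_N \cali{C}(K/\got{l}^N,\got{o}_p/p^n)$. The Fourier transform of the $L$ above, $\hat{L}(\xi)=\sum_n p^n \zeta^{\tr(\pi^{-n}\xi)}$, is uniformly continuous but not invariant under any fixed $\got{l}^N$. The paper handles this by inserting an outermost $\plim_n$ over reductions modulo $p^n$: one shows $\cali{D}(K,\got{o}_p)\simeq \plim_n \ilim_r \plim_N \Hom(\cali{C}(\got{l}^{-r}\got{o}_K/\got{l}^N,\got{o}_p/p^n),\got{o}_p/p^n)$, where the $\ilim_r$ sits \emph{inside} the $\plim_n$ rather than outside. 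This reduction mod $p^n$ is not bookkeeping but the essential move that makes the duality go through; once you insert it, the rest of your plan works.
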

	
	\begin{nothm}[B]
		Let $f\in \cali{C}(K,\got{o}_p)$ be such that $0 = \lim_{x\to \infty}f(x) = \lim_{x\to \infty}\hat{f}(x) = f(0) = \hat{f}(0)$. Let $\tilde{\chi}: K^\times \to \C_p^\times$ be a continuous character with $\tilde{\chi}(\pi) = 1$, and for any $\lambda\in \C_p^\times$, let $\chi_\lambda:K^\times \to \C_p^\times$ be the unramified character with $\chi(\pi) = \lambda$. Then the zeta integral $Z(f,\tilde{\chi}\chi_\lambda) = \int_{K} f(x)\tilde{\chi}\chi_\lambda(x)dx$ is a rigid analytic function on the Laurent domain $\{\lambda \in \C_p^\times: |\lambda|_p = 1\}$, and satisfies a local functional equation:
		\begin{align}
			\label{equation.local-fe}
			\int_{K}f(x)\tilde{\chi}\chi_\lambda(x) dx = \rho(\tilde{\chi}\chi_\lambda)\int_{K}\hat{f}(x)\tilde{\chi}^{-1}\chi_{q\lambda^{-1}}(x) dx,
		\end{align}
		
		where $\rho(\tilde{\chi}\chi_\lambda)$ is a rigid meromorphic function on $\C_p^\times$ independent of $f$. Moreover, if both $f$ and $\hat{f}$ are of Schwartz class $c$ for some $c\ge 1$ (see Definition \ref{definition.schwartz-class}), then \eqref{equation.local-fe} implies that $Z(f,\tilde{\chi}\chi_\lambda)$ can be meromorphically continued to the annulus $\{\lambda\in \C_p^\times: c^{-1}\le |\lambda|_p\le c\}$.
	\end{nothm}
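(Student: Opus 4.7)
The plan adapts Tate's classical argument to the rigid-analytic setting, in four movements.

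\emph{First}, decompose $K\setminus\{0\} = \bigsqcup_{n\in\Z}\pi^n\okt$. Since $f(0)=0$ and $\tilde\chi(\pi)=1$, on each annulus $\tilde\chi\chi_\lambda(\pi^n u) = \tilde\chi(u)\lambda^n$, so
\begin{align*}
Z(f,\tilde\chi\chi_\lambda) \;=\; \sum_{n\in\Z}\lambda^n a_n, \qquad a_n \;:=\; \int_{\pi^n\okt}f(x)\tilde\chi(x)\,dx.
\end{align*}
Because $q$ is prime to $p$, the values of $dx$ have $p$-adic absolute value one, so $|a_n|_p \le C\sup_{x\in\pi^n\okt}|f(x)|_p$ with $C=\sup_{\okt}|\tilde\chi|_p$ finite by compactness of $\okt$. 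The decay $\lim_{x\to 0}f(x)=\lim_{x\to\infty}f(x)=0$ then forces $a_n\to 0$ as $|n|\to\infty$, so the Laurent series converges on $\{|\lambda|_p=1\}$ and is rigid analytic there.

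\emph{Second}, for the functional equation set $\phi_1 := \tilde\chi\chi_\lambda$ and $\phi_2 := \tilde\chi^{-1}\chi_{q\lambda^{-1}}$, so that $\phi_1\phi_2 = |\cdot|^{-1}$. For an auxiliary $g\in\cali{C}(K,\got{o}_p)$ satisfying the same hypotheses as $f$, I would compute $Z(f,\phi_1)Z(\hat g,\phi_2)$ by Fubini and the substitution $y = xz$: the identity $\phi_1(x)\phi_2(xz)\cdot|x| = \phi_2(z)$ cancels the $x$-factors against the Jacobian, and the Fourier formula of Theorem~A rewrites $\int f(x)\hat g(xz)\,dx$ as $\int g(w)\hat f(zw)\,dw$. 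A further change of variables $z\mapsto z/w$ symmetrizes the whole expression and yields
\begin{align*}
Z(f,\phi_1)\,Z(\hat g,\phi_2) \;=\; Z(\hat f,\phi_2)\,Z(g,\phi_1).
\end{align*}
Hence $\rho(\tilde\chi\chi_\lambda) := Z(f,\phi_1)/Z(\hat f,\phi_2)$ is independent of $f$, proving \eqref{equation.local-fe} on the unit circle. To upgrade $\rho$ to a rigid meromorphic function on all of $\C_p^\times$, I would insert a specific test function---for instance a character-twisted indicator $\tilde\chi^{-1}\1_{\pi^m\okt}$ for suitable $m$---whose Fourier transform, computed via Theorem~A, reduces to a local Gauss sum; then both $Z(g,\phi_1)$ and $Z(\hat g,\phi_2)$ become finite Laurent polynomials in $\lambda^{\pm 1}$, displaying $\rho$ as a rigid meromorphic function on $\C_p^\times$.

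\emph{Third}, for the Schwartz-class consequence, the hypothesis on $f$ should bound the Laurent coefficients $a_n$ so that $\sum_n a_n\lambda^n$ extends to a rigid analytic function on the closed disk $\{|\lambda|_p\le c\}$; symmetrically, $\hat f$ being Schwartz of class $c$ gives an extension of $Z(\hat f,\phi_2)$, viewed in $\lambda$ via $\lambda\mapsto q\lambda^{-1}$, to the region $\{|\lambda|_p\ge c^{-1}\}$. The functional equation then glues these two rigid analytic pieces across the unit circle into a meromorphic continuation of $Z(f,\phi_1)$ to the annulus $\{c^{-1}\le|\lambda|_p\le c\}$, the poles coming only from $\rho$.

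I expect the main obstacle to lie in the Fubini/substitution step of the second movement: one must verify absolute convergence of each double integral in the rigid-analytic sense and that the substitution $y = xz$ interacts cleanly with the additive trace-pairing underlying $\hat g$. The explicit evaluation of $\rho$---choosing a test $g$ whose Fourier transform is an \emph{explicit} finite sum, especially when $\tilde\chi$ is ramified---will carry most of the remaining combinatorial bookkeeping.
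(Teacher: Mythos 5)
Your proposal follows the paper's architecture faithfully: Laurent decomposition over annuli (the paper's Lemma \ref{lem.analyticity}), Tate's cross-multiplication identity $Z(f,\chi)Z(\hat g,\chi^*)=Z(\hat f,\chi^*)Z(g,\chi)$ via a $\Psi$-type double integral (the paper's Lemma \ref{lem.local-fe}), and an explicit test function to realize $\rho$ as a meromorphic function. Two points need repair, one of them a genuine gap.

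The genuine gap is your choice of test function. You propose $g=\tilde\chi^{-1}\1_{\pi^m\okt}$ and need \emph{both} $g$ and $\hat g$ to be of Schwartz class $1$ for Lemma \ref{lem.local-fe} to apply. When $\tilde\chi$ is ramified this is fine (the Fourier transform is a Gauss-sum times the indicator of a punctured annulus, vanishing at $0$). But when $\tilde\chi$ is trivial, $g=\1_{\pi^m\okt}$ has $\hat g(y)=q^{-m-\delta/2}\1_{\got{l}^{-m}\got{d}^{-1}}(y)-q^{-m-1-\delta/2}\1_{\got{l}^{-m-1}\got{d}^{-1}}(y)$, so $\hat g(0)=q^{-m-\delta/2}(1-q^{-1})\ne 0$ for every $m$. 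By Proposition \ref{prop.schwartz-criterion} this means $\hat g$ is never of Schwartz class $1$, and indeed $Z(\hat g,\chi^*)=\int \hat g\,\chi^*\,dx$ fails to converge when $|\chi^*(\pi)|_p=1$, because the partial sums $\int_{\pi^n\okt}\hat g\chi^*\,dx$ do not tend to $0$ $p$-adically (they approach $\hat g(0)\chi^*(\pi)^n q^{-n-\delta/2}\int_{\okt}\chi^*\,dx$, whose $p$-adic size is constant). One must instead take a linear combination engineered so that both the function and its transform vanish at $0$; the paper uses $h_0 = -\tfrac{1}{q}\1_{\okt}+\1_{1+\pi\got{o}_K}-\1_{\pi+\pi^2\got{o}_K}$ for exactly this reason.

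The second point is a misstatement in your third movement: the Schwartz class $c$ hypothesis does \emph{not} make $\sum_n a_n\lambda^n$ converge on the closed disk $\{|\lambda|_p\le c\}$. The positive-index tail is controlled by the decay of $f$ at $0$ (converging for $|\lambda|_p\le c$), but the negative-index tail is controlled by the decay of $f$ at $\infty$ and only converges for $|\lambda|_p\ge 1$; the Laurent series therefore lives on the annulus $\{1\le|\lambda|_p\le c\}$, as in Lemma \ref{lem.analyticity}. Correspondingly $Z(\hat f,\chi^*)$, under $\lambda\mapsto q/\lambda$, is analytic only on $\{c^{-1}\le|\lambda|_p\le 1\}$, and the gluing happens across the common unit circle $\{|\lambda|_p=1\}$ --- which is admissible open, the essential $p$-adic phenomenon the paper leans on. Once both corrections are made the rest of your outline matches the paper's proof.
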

	
	We remark that our approach to Theorem (A), namely via group schemes, has the merit that it does not presume any knowledge of the complex Fourier transform (which is really a special case of Pontryagin duality), despite the resulting explicit formula for $(fdx)^\wedge_\zeta$ taking the same form as the complex one. In other words, it can be seen as an instance where Cartier and Pontryagin duality ``coincide''. On the other hand, Theorem (B) shows that, if we supplant complex analysis by rigid analysis, the $p$-adic valued locally theory bears an uncanny resemblance to the complex one presented in §2.4 of \cite{Ta50}.

	\section{Generalities on Fourier theory}
	In §2.1 we shall consider functions on $K/\Q_\ell$ from the viewpoint of analysis and discuss their linearization; in §2.2 we use the linearization to give an algebraic viewpoint, and thereby realize the statement ``$K$ is self dual'' as known in classical Fourier analysis. This eventually leads to our construction of the Fourier transform in §2.3. 
	
	\vspace{3mm}
	
	\textit{Remark on prior works.}
	We emphasize that the algebraization of ``distributions'' on an $\ell$-adic ring of integers is not new, and in fact can be found in the literature as early as 1960s, e.g., \cite{Am64} and \cite{MS74}, which treated mainly the $\ell=p$ case. The (formal) group scheme interpretation is prevalent in the works of Katz on $p$-adic $L$-functions and congruences of Bernoulli-Hurwitz numbers, see for example \cite{Ka81} and \cite{Ka82}. On the other hand, to the best knowledge of the author, the $\ell\ne p$ case has attracted little attention from number theorists (perhaps the most notable being \cite{Si87}), and is only treated in detail, with strong analysis flavor, in W.~H.~Schikhof's PhD thesis as recorded in Chapter 9 of \cite{vR78}. In particular, one should note that the main result of this section, Theorem \ref{thm.fourier-inversion}, is in fact due to Schikhof (\textit{cf}. Theorem 9.21 \textit{loc. cit.}), although the author was unaware of the work of Schikhof until he finished writing this note. Still, we have not found our treatment in terms of group schemes anywhere in the literature, and as emphasized in the introduction, our treatment will be independent of the classical Pontryagin duality, which plays a key role in Schikhof's Fourier theory. As such, we believe that our treatment might be more natural, and better suited for modern number theorists .
	
	\subsection{Linearization of function spaces}
	For topological spaces $X,Y$, we let $\cali{C}(X,Y)$ denote all continuous functions from $X$ to $Y$. For $\cali{C}(K,\got{o}_p)$, we can make it a Fr\'echet space: for each compact open subset $U$ of $K$, we can define a semi-norm $\norm{f}_U = \sup_{x\in U}|f(x)|_p$. It is easy to see that $(\cali{C}(U,\got{o}_p), \norm{\cdot}_U)$ is a complete normed $\op$-module. Since by restriction we have an isomorphism of $\got{o}_p$-modules, $\cali{C}(K,\got{o}_p)\simeq \plim_U \cali{C}(U,\got{o}_p)$, we may make $\cali{C}(K,\got{o}_p)$ Fr\'echet with $\{\norm{\cdot}_U\}_U$ being its topology-defining semi-norms. 
	
	\vspace{3mm}
	
	We note that modulo $p^n$, we have a natural identification $\cali{C}(\got{l}^{-r}\got{o}_K,\got{o}_p/p^n) \simeq \ilim_{N} \cali{C}(\got{l}^{-r}\got{o}_K/\got{l}^N, \got{o}_p/p^n)$, because any continuous function on $\got{l}^{-r}\got{o}_K$ valued in the discrete ring $\got{o}_p/p^n$ must be locally constant and $\got{l}^{-r}\got{o}_K$ is compact. Now let $r$ vary and replace the above family $\{U\}_{U: \rm open\ compact}$ by $\{\got{l}^{-r}\got{o}_K\}_{r\ge 0}$ and $\got{o}_p$ by $\got{o}_p/p^n$, we can prolongate the above identification of $\op/p^n$-modules: 
	\[
		\cali{C}(K,\got{o}_p/p^n) \simeq \plim_{r}\ilim_{N} 
		\cali{C}(\got{l}^{-r}\got{o}_K/\got{l}^N, \got{o}_p/p^n),
	\]
	
	where the projective limit is taken with respect to restrictions and the inductive limit is defined using inclusions. Inside this space, we have also the uniformly continuous functions given by inverting limits: $\ilim_{N}\plim_{r}\cali{C}(\got{l}^{-r}\got{o}_K/\got{l}^N,\got{o}_p/p^n) = \ilim_{N}\cali{C}(K/\got{l}^N,\got{o}_p/p^n)$. 
	
	\vspace{3mm}
	\textbf{Example.} The function $f = \sum_{n\ge 0} \1_{1/\ell^n + \ell^n\Z_\ell}$ is continuous, but not uniformly continuous.
	
	\vspace{3mm}
	The linearization above is useful in that it is also topological: 
	
	\begin{prop}
		\label{prop.topological-linearization}
		Let $p_{n,r}: \cali{C}(K,\got{o}_p) \to 
		\cali{C}(\got{l}^{-r}\got{o}_K,\got{o}_p/p^n)$ be the canonical projection by restriction and modulo $p^n$. Then the Fr\'echet space $\cali{C}(K,\got{o}_p)$ is linearly topologized by $\{\ker(p_{n,r})\}_{n,r}$; i.e., these kernels are $\got{o}_p$-modules, and they form a fundamental system at the zero function. Hence we have a topological linearization:
		\begin{align}
		\label{equation.linearization}
		\cali{C}(K,\op) \simeq \plim_{n}\plim_{r:\rm{res.}}\ilim_{N:\textrm{\rm{incl.}}}
		\cali{C}(\grN{r}{N},\opn)
		\end{align}
		where the limit over $r$ is with respect to restriction and the colimit over $N$ is with respect to inclusion (as functions on $\got{l}^{-r}\got{o}_K$).
	\end{prop}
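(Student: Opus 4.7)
The plan is to verify that the ``kernel'' filtration $\{\ker(p_{n,r})\}_{n,r}$ defines the same topology as the semi-norms $\{\|\cdot\|_U\}_U$, and then to read off the displayed linearization by taking the completion.

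\textbf{Step 1 (linear topology).} First I observe that each $p_{n,r}$ is visibly $\op$-linear, so each $\ker(p_{n,r})$ is an $\op$-submodule of $\cali{C}(K,\op)$. Next I compare the two topologies. Since $|\cdot|_p$ takes discrete values on $\op$, a basic semi-norm neighborhood of $0$ has the form
\[
    V_{U,n} \;=\; \{f\in\cali{C}(K,\op):\|f\|_U\le p^{-n}\}
\]
for some compact open $U\subset K$ and some $n\ge 0$. Choosing $r$ large enough that $U\subset \got{l}^{-r}\got{o}_K$, we immediately get $\ker(p_{n,r})\subset V_{U,n}$. Conversely, the very definition of $p_{n,r}$ yields $\ker(p_{n,r}) = V_{\got{l}^{-r}\got{o}_K,n}$. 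Hence the two systems of neighborhoods are cofinal and the Fr\'echet topology is linear.

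\textbf{Step 2 (identification of quotients).} Since $\cali{C}(K,\op)$ is Fr\'echet (in particular Hausdorff and complete), Step 1 gives a topological isomorphism
\[
    \cali{C}(K,\op) \;\simeq\; \plim_{n,r}\; \cali{C}(K,\op)/\ker(p_{n,r}).
\]
I need to identify each quotient with $\cali{C}(\got{l}^{-r}\got{o}_K,\op/p^n)$. This amounts to surjectivity of $p_{n,r}$: since $\got{l}^{-r}\got{o}_K$ is \emph{both open and closed} in $K$, any continuous $g:\got{l}^{-r}\got{o}_K\to\op/p^n$ extends by zero to a continuous $\tilde g:K\to\op/p^n$, and by the inverse limit description of $\op$ such a $\tilde g$ lifts to an element of $\cali{C}(K,\op)$ (one truncates the limit, then lifts coefficient-by-coefficient; here one uses that $\op/p^n$ is discrete so the lift is automatically continuous).

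\textbf{Step 3 (insertion of the inductive limit).} Finally, as remarked just before the statement, the compactness of $\got{l}^{-r}\got{o}_K$ together with the discreteness of $\op/p^n$ forces every continuous function to be locally constant, so
\[
    \cali{C}(\got{l}^{-r}\got{o}_K,\op/p^n) \;=\; \ilim_{N}\; \cali{C}(\grN{r}{N},\opn),
\]
and splicing this into the double inverse limit of Step 2 delivers the displayed formula \eqref{equation.linearization}.

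The only non-routine point is Step 1, where one must know that the $p$-adic absolute value on $\op$ is discretely valued so that the $\epsilon$-balls in each semi-norm are themselves submodules; everything else is formal manipulation of limits and the open–closed property of $\got{l}^{-r}\got{o}_K\subset K$.
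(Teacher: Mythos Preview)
Your argument is essentially the same as the paper's: both identify $\ker(p_{n,r})$ with the closed ball $\{f:\|f\|_{\got{l}^{-r}\got{o}_K}\le p^{-n}\}$ and then use that the family $\{\|\cdot\|_{\got{l}^{-r}\got{o}_K}\}_{r\ge 0}$ is cofinal among the defining semi-norms. Your Steps~2 and~3 spell out the ``Hence'' in the statement, which the paper leaves implicit.

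One slip to fix: your sentence ``Since $|\cdot|_p$ takes discrete values on $\op$'' is false here, because $\op$ is the valuation ring of $\C_p$, whose value group is dense in $\R_{>0}$. Fortunately this does not damage the argument: for any $\epsilon>0$ you can still pick an integer $n$ with $p^{-n}<\epsilon$, so the sets $V_{U,n}$ are cofinal among all semi-norm balls even without discreteness. Just replace the discreteness remark by this observation and the proof stands.
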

	
	\begin{proof}
		First, by definition we have $\ker(p_{n,r}) = \{f\in \cali{C}(K,\got{o}_p) : f(\got{l}^{-r}\got{o}_K) \subseteq p^n\got{o}_p\}$. In particular, it is an $\got{o}_p$-module. Next, we note that under the semi-norm $\norm{\cdot}_{\got{l}^{-r}\got{o}_K}$, $\ker(p_{n,r})$ is nothing but $\{g:\norm{g}_{\got{l}^{-r}\got{o}_K}\le p^{-n}\}$. Therefore $\ker(p_{n,r})$ is open. Finally, to see that they form a fundamental system, we simply observe that the sub-family of semi-norms $\norm{\cdot}_{\got{l}^{-r}\got{o}_K}$, for $r\in \Z_{\ge 0}$, already give the Fr\'echet topology of $\cali{C}(K,\got{o}_p)$, which conludes that the family of open sets $\{\ker(p_{n,r})\}_{n,r}$ have enough candidates to become a fundamental system at 0.
	\end{proof}
	
	Next, we define the continuous linear dual $\cali{D}(K,\got{o}_p) = \Hom^{\mathrm{cts}}_{\got{o}_p}(\cali{C}(K,\got{o}_p), \got{o}_{p})$. A consequence of Proposition \ref{prop.topological-linearization} is the following: 
	\begin{cor}
		\label{cor.measure-linearization}
		There is a linearization:
		\begin{align}
		\label{equation.pre-linearization-measure}
		\cali{D}(K,\got{o}_p) \simeq \plim_{n}\ilim_{r}\plim_{N}
		\Hom_{\op}(\cali{C}(\got{l}^{-r}\got{o}_K/\got{l}^N, \opn),\opn),
		\end{align}
		where the colimit over $r$ is induced by restrictions and the limit over $N$ is induced by inclusions.
	\end{cor}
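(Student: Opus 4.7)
The plan is to apply the continuous hom functor $\Hom^{\cts}_{\op}(-,\op)$ to the topological linearization \eqref{equation.linearization} and track how each limit or colimit transforms. The argument breaks naturally into three steps; the main obstacle is simply keeping the bookkeeping of transition maps (restrictions versus inclusions) straight at each stage, together with one small verification of surjectivity.

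First, since $\op = \plim_n \opn$ with each $\opn$ discrete, a continuous $\op$-linear map $\mu : \cali{C}(K,\op) \to \op$ is equivalent to a compatible system of continuous $\op$-linear maps $\mu_n : \cali{C}(K,\op) \to \opn$. This gives
$$\cali{D}(K,\op) \simeq \plim_n \Hom^{\cts}_{\op}(\cali{C}(K,\op), \opn).$$

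Next, I would invoke Proposition \ref{prop.topological-linearization}: the $\op$-submodules $\{\ker(p_{n,r})\}_r$ form a fundamental system of open neighborhoods of $0$ in $\cali{C}(K,\op)$. Because $\opn$ is discrete, continuity of $\mu_n$ amounts to $\mu_n$ vanishing on some $\ker(p_{n,r})$, i.e.\ to $\mu_n$ factoring through the surjection $p_{n,r} : \cali{C}(K,\op) \twoheadrightarrow \cali{C}(\got{l}^{-r}\got{o}_K, \opn)$. (The surjectivity here follows because any locally constant function on $\got{l}^{-r}\got{o}_K$ valued in $\opn$ can be extended by zero to $K$ and then lifted value-by-value to $\op$.) This produces the colimit structure
$$\Hom^{\cts}_{\op}(\cali{C}(K,\op), \opn) \simeq \ilim_r \Hom_{\op}(\cali{C}(\got{l}^{-r}\got{o}_K, \opn), \opn),$$
with transitions induced by restriction, as required.

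Finally, from the identification $\cali{C}(\got{l}^{-r}\got{o}_K, \opn) \simeq \ilim_N \cali{C}(\grN{r}{N}, \opn)$ recorded in §2.1 (with transitions the pullbacks along the quotients $\grN{r}{N'} \twoheadrightarrow \grN{r}{N}$ for $N < N'$), together with the universal property sending an inductive limit of modules to a projective limit of hom groups, I obtain
$$\Hom_{\op}(\cali{C}(\got{l}^{-r}\got{o}_K, \opn), \opn) \simeq \plim_N \Hom_{\op}(\cali{C}(\grN{r}{N}, \opn), \opn).$$
Concatenating these three identifications yields \eqref{equation.pre-linearization-measure}.
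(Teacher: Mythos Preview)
Your proposal is correct and follows essentially the same three-step route as the paper's proof: pass to $\plim_n$ of continuous maps into $\opn$, use the linear topology from Proposition~\ref{prop.topological-linearization} to produce the $\ilim_r$, and finally dualize the colimit $\ilim_N$ into $\plim_N$. One small imprecision: you assert that $\{\ker(p_{n,r})\}_r$ (with $n$ fixed) is a fundamental system of neighborhoods of $0$ in $\cali{C}(K,\op)$, which is literally false---Proposition~\ref{prop.topological-linearization} only gives this for the full family $\{\ker(p_{n,r})\}_{n,r}$; the paper closes this gap by noting that any $\mu_n$ into $\opn$ automatically kills $p^n\cali{C}(K,\op)$, so that $\ker(\mu_n)\supseteq \ker(p_{n',r}) + p^n\cali{C}(K,\op)\supseteq \ker(p_{n,r})$, which recovers the conclusion you want.
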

	\begin{proof}
		For any $L \in \cali{D}(K,\op)$ and $f\in \cali{C}(K,\op)$, to know the value $L(f)\in \op$ it is necessary and sufficient to know the values $L(f) \bmod p^n$ for all $n\ge 1$. So, we have an injection $\cali{D}(K,\got{o}_p) \hookrightarrow \plim_{n}\Hom^{\cts}_{\op}(\cali{C}(K,\op), \opn)$. This map is also surjective: For any $L=(L_n)_n \in \plim_{n}\Hom^{\cts}_{\op}(\cali{C}(K,\op), \opn)$, we can regard $L$ as a linear functional on $\cali{C}(K,\op)$ by $L(f)=\lim_n L_n(f)$. Furthermore, $L$ is continuous because for all $a\in \op$, $n\in \Z_{\ge 0}$ and $m\ge n$, $L^{-1}(a+ p^n\op) = L_m^{-1}(a+p^n\op)$ is an open subset of $\cali{C}(K,\op)$ by the continuity of $L_m$.
		
		\vspace{3mm}
		
		It suffices now to deal with $\Hom^{\cts}_{\op}(\cali{C}(K,\op),\opn)$. Note first that as $p^n \cali{C}(K,\op) = \cali{C}(K,p^n\op)$, we have $\Hom^{\cts}_{\op}(\cali{C}(K,\op),\opn) \simeq \Hom^{\cts}_{\op}(\cali{C}(K,\opn),\opn)$. Since $\opn$ is discrete, any linear functional belonging to this hom-set has an open kernel. By Proposition \ref{prop.topological-linearization} we see that the kernel contains some $\ker(p_{n',r})$, and thus contains $\ker(p_{n',r}) + p^n\cali{C}(K,\got{o}_p) \supseteq \ker(p_{n,r})$. Therefore, 
		\[
			\Hom^{\cts}_{\op}(\cali{C}(K,\op),\opn) = \ilim_{r}\Hom^{\cts}_{\op}(\cali{C}(\got{l}^{-r}\got{o}_K,\opn),\opn) = \ilim_{r}\plim_{N}\Hom^{\cts}_{\op}(\cali{C}(\grN{r}{N},\opn),\opn).
		\]
		
		We note finally that since $\cali{C}(\grN{r}{N},\opn)$ is discrete, we may drop the $\cts$ superscript.
	\end{proof}

	To unveil the nature of the space $\cali{D}(K,\op)$, we find it enlightening to think in terms of (commutative finite \'etale) group schemes. This will be carried out in the following section.

	\subsection{Group scheme theoretic interpretation}
	
	Consider the constant group scheme $\GrN{r}{N}$ defined over $\Z$, which is the sheafification of the functor $G_{r,N}(R) = \got{l}^{-r}\got{o}_K/\got{l}^N$ for any ring $R$. When $G$ is an affine group scheme over a ring $S$ and $R$ is an $S$-algebra, we shall write $\cali{A}_{G,R}$ for the affine algebra of $G\times_S R/\spec(R)$. It is easy to show that, as an algebra, $\cali{A}_{\GrN{r}{N},R} \simeq \oplus_{a\in \grN{r}{N}} R \delta_a$, where $\delta_a$'s are orthogonal idempotents and the identity is given by $\sum_a \delta_a$. Moreover, the comultiplication is given by the formula
	\[s: \delta_a \mapsto \sum_{b+c = a} \delta_b\otimes \delta_c.\] 
	
	This is relevant to us, because when we regard $\cali{C}(\grN{r}{N},\opn)$ as an $\opn$-algebra (its arithmetic coming from $\opn$), it is isomorphic to $\cali{A}_{\GrN{r}{N}}\otimes \opn$. For instance, the idempotent $\delta_a$ represents the characteristic function $\1_{a+\got{l}^N}(x)$ on $\grN{r}{N}$. 
	
	\vspace{3mm}
	Now we consider $\Hom_{\op}(\cali{C}(\grN{r}{N},\opn),\opn)$. If we identify $\cali{C}(\grN{r}{N},\opn)$ with $\cali{A}_{\GrN{r}{N}}\otimes{\opn}$, then we see the hom-set is naturally identified as the affine algebra of the Cartier dual of $\GrN{r}{N}$ over $\opn$. Temporarily we write $\G_{N,r} = (\GrN{r}{N})^\vee$, where $\vee$ stands for the Cartier dual. As such, by Corollary \ref{cor.measure-linearization}, we see immediately $\cali{D}(K,\op) \simeq \plim_{n}\ilim_{r}\plim_{N}\cali{A}_{\G_{N,r},\opn}$. By virtue of the next theorem, we may identify $\G_{N,r}$ with $\GrNvee{N}{r}$. Granting this, if we revert to analytic terms, we see that we have really shown ``measures on $K$ are (uniformly continuous) functions on $K$'' ------ or as we claimed earlier, ``$K$ is self-dual''. Strictly speaking, we also need to check ``functoriality'', namely the (co-)limits are exactly those we saw in the linearization of functions; this is taken care of by Proposition \ref{prop.functoriality} below. 
	
	\begin{thm}
		\label{thm.etalization}
		Let $\got{d}$ be the different ideal of $\got{o}_K/\Z_\ell$. Let $R$ be a ring in which $\ell$ is invertible, and which contains a primitive $\ell^{N+r}$-th root of unity $\zeta_{N,r}$. Then we have an isomorphism $\G_{N,r}\times_{\Z} R \simeq \underline{\got{l}^{-N}\got{d}^{-1}/\got{l}^{r}\got{d}^{-1}}$ depending on the choice of $\zeta_{N,r}$. 
	\end{thm}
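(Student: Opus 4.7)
My plan is to construct the isomorphism explicitly via the trace form on $K/\Q_\ell$, and then verify it by invoking the classical characterization $\got{d}^{-1} = \{y\in K : \tr(y\got{o}_K)\subseteq \Z_\ell\}$. Since $\grN{r}{N}$ is annihilated by $\pi^{N+r}$, hence by $\ell^{N+r}$, and $\ell$ is invertible in $R$, the Cartier dual $\G_{N,r}$ is finite étale over $R$, and unwinding definitions yields
\[
\G_{N,r}(R) = \Hom_{\Z}(\grN{r}{N}, R^\times) = \Hom_{\Z}(\grN{r}{N}, \mu_{\ell^{N+r}}(R)).
\]

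Next I would introduce the pairing
\[
\grN{r}{N}\times \grNvee{N}{r} \longrightarrow \Q_\ell/\Z_\ell,\qquad (x,y)\mapsto \tr(xy)\bmod \Z_\ell,
\]
which is well-defined because $\tr(\got{l}^N\cdot \got{l}^{-N}\got{d}^{-1})\subseteq \tr(\got{d}^{-1})\subseteq \Z_\ell$ and symmetrically in the second slot. Since $\pi^{-N-r}\got{d}^{-1}\subseteq \ell^{-(N+r)}\got{d}^{-1}$, its image lies in $\ell^{-(N+r)}\Z_\ell/\Z_\ell$, and postcomposing with the injection $\ell^{-(N+r)}\mapsto \zeta_{N,r}$ upgrades the pairing to one taking values in $\mu_{\ell^{N+r}}(R)$. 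By currying this yields a group homomorphism $\grNvee{N}{r}\to \G_{N,r}(R)$, and since the source is a constant group scheme, this globalizes at once to a morphism of $R$-group schemes $\GrNvee{N}{r}\times_{\Z}R\to \G_{N,r}\times_{\Z}R$.

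To finish, one shows that this morphism is an isomorphism. Both sides are finite étale $R$-group schemes that become constant after base change to $R$ (the target thanks to the presence of $\zeta_{N,r}$), so it suffices to check bijectivity on $R$-points; equivalently, the pairing above must be perfect. This last assertion is where the real content lies: the trace-duality property of $\got{d}^{-1}$ immediately gives $\{y\in K : \tr(y\cdot \got{l}^{-r}\got{o}_K)\subseteq \Z_\ell\} = \got{l}^r\got{d}^{-1}$ and, symmetrically, $\{x\in K : \tr(x\cdot \got{l}^{-N}\got{d}^{-1})\subseteq \Z_\ell\} = \got{l}^N$, whence both radicals of the pairing vanish. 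The only technical point in the whole argument is this bookkeeping with the different ideal; no deeper input beyond the defining property of $\got{d}$ is needed.
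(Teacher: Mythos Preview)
Your proposal is correct and follows essentially the same route as the paper: construct the morphism via the trace pairing composed with $\zeta_{N,r}$, then deduce it is an isomorphism from the perfectness of that pairing (which the paper isolates as a separate lemma, proved exactly as you do from the defining property of $\got{d}^{-1}$). The only cosmetic difference is that the paper phrases the final reduction as a check on geometric points rather than on $R$-points---slightly cleaner since $R$ need not be connected---but the substantive content in both arguments is identical.
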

	
	\textit{Remark.} This is where we find the perspective of group schemes extremely useful: we know an \'etale finite flat group scheme is constant over the algebraic closure. Since $\ell \ne p$ we can immediately apply this to our group scheme $\G_{N,r}$. In the proof however we shall give an explicit isomorphism, which is already hinted in §2.2~of Tate's thesis.
	
	\begin{proof}[Proof of Theorem \ref{thm.etalization}.]
		Our strategy is the following: First we construct a morphism from the constant presheaf of $\grNvee{N}{r}$ to $\G_{N,r}$. Then, since $\G_{N,r}$ is a sheaf, we extend the morphism to $\GrNvee{N}{r}$. Finally, we check the isomorphism at geometric points, which is enough since both groups are \'etale. 
		
		\vspace{3mm}
		\textit{Step 1.} Recall that if $G$ is a group scheme over $R$, the Cartier dual as a functor is given by $G^\vee(R') = \Hom_{R'-\textrm{GrpSch}}(G_{R'}, \G_{m,R'})$, an element of which is then an invertible element $x\in \cali{A}_{G,R'}$ such that $s(x) = x \otimes x$, where $s$ is the comultiplication.
		
		\vspace{3mm}
		
		\textit{Step 2.} Let $G_{N,r}$ be the constant functor such that $G_{N,r}(R') = \grNvee{N}{r}$ and $G(f)\colon G(S)\to G(T)$ is the identity map, for any $R', S, T$ over $R$ and $f:S/R\to T/R$. We shall construct a morphism $G_{N,r} \to \G_{N,r}$. Using the description of points in step 1, it suffices to construct functorial morphisms $\grNvee{N}{r} \to \Hom_{R'-\textrm{GrpSch}}(G_{R'}, \G_{m,R'})$. Concretely, for each $R'/R$, and each $b\in \grNvee{N}{r}$, we need to construct $x_b = x_{b,R'} \in (\oplus_{a\in \grN{r}{N}} R'\delta_a)^\times$ such that 
		\begin{itemize}
			\item $s(x_b) = x_b\otimes x_b$;
			\item $x_{a+b} = x_a x_b$;
			\item if $f: S\to T$ is an $R$-algebra map then $f(x_{b,S})=x_{b,T}$.
		\end{itemize}
		
		\vspace{3mm}
		\textit{Step 3.} Define a function $\zeta^x: \ell^{-N-r}\Z_\ell/\Z_\ell \to R^\times$ by $\zeta^x = \zeta_{N,r}^{[\ell^{N+r}]x}$ where for any $t>0$, $[\ell^t]: \ell^{-t}\Z_\ell/\Z_\ell \xrightarrow{\sim} \Z_\ell/\ell^t$ is the isomorphism sending $y$ to $\ell^t y$. We may compose $\zeta^x\circ \tr : \got{l}^{-N-r}\got{d}^{-1}/\got{d}^{-1} \to R^\times$, which makes sense since $\tr$ sends $\got{l}^{-N-r}\got{d}^{-1}/\got{d}^{-1}$ to $\ell^{-N-r}\Z_\ell/\Z_\ell$ (in general not surjective).
		
		\vspace{3mm}
		
		\textit{Step 4.} For $b \in \grNvee{N}{r}$, we write
		\[x_b = \sum_{a\in\grN{r}{N}} \zeta^{\tr(ab)}\delta_a.\]
		
		Seeing $\zeta^{\tr(ab)}\in R^\times$ and $\delta_a$'s are orthogonal idempotents, it is clear that $x_b\in (\oplus_{a\in \grN{r}{N}} R\delta_a)^\times$. Moreover, we verify:
		\begin{align*}
		\begin{split}
		s(x_b) &= \sum_{a\in\grN{r}{N}}\zeta^{\tr(ab)}\sum_{u+v =a}\delta_u\otimes\delta_v\\
		&= \sum_{u,v}\zeta^{\tr(ub)}\delta_u\otimes \zeta^{\tr(vb)}\delta_v\\
		&= x_b\otimes x_b,
		\end{split}
		\end{align*}
		and
		\[x_b x_c = \sum_{a\in\grN{r}{N}} \zeta^{\tr(ab)+\tr(ac)}\delta_a 
		= x_{b+c}.\]
		
		Finally, since by definition $x_b$ has coefficients in $R$, the functorial requirement is automatically satisfied.
		
		\vspace{3mm}
		\textit{Step 5.} We have constructed a natural morphism $G_{N,r} \to \G_{N,r}$. Hence by the universal property, it extends to $\GrNvee{N}{r} \to \G_{N,r}$. Since $\ell^{-1}\in R$, we know that both of the $\ell$-groups, $\grNvee{N}{r}$ and $\G_{N,r}$, are \'etale. Thus, the isomorphism can be checked by checking only Galois modules $\GrNvee{N}{r}(k) \to \G_{N,r}(k)$ for any $R\to k$ geometric (namely $k$ is an algebraically closed field). To do this, we need to show that any $x\in \Hom_{k-\textrm{GrpSch}}(G_{k},\G_{m,k})$ is of the form $x_b$ for a unique $b\in \grNvee{N}{r}$ (as such, both $\GrNvee{N}{r}(k)$ and $\G_{N,r}(k)$ are acted trivially by the Galois/fundamental group of $k/R$).
		
		\vspace{3mm}
		\textit{Step 6.} Let $x\in \Hom_{k-\textrm{GrpSch}}(G_k,\G_{m,k})$, and we may write $x = \sum_{a\in\grN{r}{N}}z(a)\delta_a$ where $z$ is valued in $k^\times$. Since $s(x) = x\otimes x$, expand it and we have
		\[\sum_{a\in\grN{r}{N}}z(a)\sum_{u+v =a}\delta_u\otimes \delta_v = \sum_{a\in\grN{r}{N}}z(a)\delta_a \otimes \sum_{b\in\grN{r}{N}}z(b)\delta_b.\]
		
		Comparing the coefficients of $\delta_a\otimes \delta_b$, we find that $z(a+b) = z(a)z(b)$. Therefore, we have shown that any $x$ is coming from some $z \in \Hom(\grN{r}{N}, k^\times)$, and it is easy to work out, conversely, any $z \in \Hom(\grN{r}{N}, k^\times)$ gives an $x\in \Hom_{k-\textrm{GrpSch}}(G_k,\G_{m,k})$ by the above formula. By using the following lemma we may then conclude $x = x_b$ for some unique $b \in \grNvee{N}{r}$. Hence, by step 5 the proof is finished.
	\end{proof}

	\begin{lem}
		\label{lem.pairing-perfect}
		Let $R$ be a ring containing a primitive $\ell^{N+r}$-th root of unity $\zeta_{N,r}$. The bilinear pairing $\grN{r}{N} \otimes \grNvee{N}{r} \to R^\times$ given by $a\otimes b \mapsto \zeta^{\tr(ab)}$ is perfect; i.e., it induces an isomorphism:
		\[\grNvee{N}{r} \simeq \Hom(\grN{r}{N}, R^\times).\]
	\end{lem}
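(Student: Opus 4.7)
The plan is to reduce perfectness to injectivity by a counting argument, and then use the defining property of the different ideal $\got{d}^{-1} = \{x\in K: \tr(x\got{o}_K)\subseteq \Z_\ell\}$.

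First I would check that the pairing is well-defined on the quotients. For $a\in \got{l}^{-r}\got{o}_K$ and $b\in \got{l}^{-N}\got{d}^{-1}$, the product $ab$ lies in $\got{l}^{-(N+r)}\got{d}^{-1}$, so $\tr(ab)\in \ell^{-(N+r)}\Z_\ell$ and $\zeta^{\tr(ab)}$ makes sense. Shifting $a$ by an element of $\got{l}^N$ or $b$ by an element of $\got{l}^r\got{d}^{-1}$ moves $ab$ into $\got{d}^{-1}$, hence moves $\tr(ab)$ into $\Z_\ell$, so $\zeta^{\tr(ab)}$ is unchanged. Bilinearity is immediate from the $\Z_\ell$-linearity of $\tr$ and the definition of $\zeta^{\bullet}$.

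Next I would carry out the counting. Since $\ell\got{o}_K\subseteq \got{l}$, one checks $\ell^{N+r}\got{l}^{-r}\got{o}_K\subseteq \got{l}^N$ and $\ell^{N+r}\got{l}^{-N}\got{d}^{-1}\subseteq \got{l}^r\got{d}^{-1}$, so both $\grN{r}{N}$ and $\grNvee{N}{r}$ have exponent dividing $\ell^{N+r}$ and order $q^{N+r}$. Because $R$ contains $\mu_{\ell^{N+r}}$, the Hom-set $\Hom(\grN{r}{N},R^\times)$ equals $\Hom(\grN{r}{N},\mu_{\ell^{N+r}})$, which by the structure theorem for finite abelian groups has the same order as $\grN{r}{N}$, namely $q^{N+r}$. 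It therefore suffices to show that the induced map $\grNvee{N}{r}\to \Hom(\grN{r}{N},R^\times)$, $b\mapsto (a\mapsto \zeta^{\tr(ab)})$, is injective.

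For injectivity, suppose $b\in \got{l}^{-N}\got{d}^{-1}$ pairs trivially with every $a\in \got{l}^{-r}\got{o}_K$. Since $\zeta_{N,r}$ is primitive of order $\ell^{N+r}$, the condition $\zeta^{\tr(ab)}=1$ is equivalent to $\tr(ab)\in \Z_\ell$, so we get $\tr(b\got{l}^{-r}\got{o}_K)\subseteq \Z_\ell$. By the characterization of $\got{d}^{-1}$ recalled above, this forces $b\got{l}^{-r}\subseteq \got{d}^{-1}$, hence $b\in \got{l}^r\got{d}^{-1}$, i.e., $b=0$ in $\grNvee{N}{r}$.

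The only delicate points are tracking the exponents of both sides (so that the pairing factors through a $\mu_{\ell^{N+r}}$-valued pairing matched by $R$) and invoking the correct characterization of $\got{d}^{-1}$ at the injectivity step; after that the argument is formal.
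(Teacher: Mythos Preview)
Your proof is correct and follows essentially the same approach as the paper: establish injectivity via the defining property of $\got{d}^{-1}$, then conclude surjectivity by the counting argument that $\#\Hom(G,R^\times)=\#G$ for a finite abelian $\ell$-group $G$ killed by $\ell^{N+r}$ when $R$ contains $\mu_{\ell^{N+r}}$. Your additional verification of well-definedness on the quotients and of the exponent bound is a welcome clarification that the paper leaves implicit.
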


	\begin{proof}
		 The induced map is injective because, for the trace pairing $\tr\colon \got{l}^{-N}\got{d}^{-1} \otimes \got{l}^{-r}\got{o}_K \to \Q_\ell$, $\tr(a\otimes b) \in \Z_\ell$ for all $a\in \got{l}^{-r}$ if and only if $\pi^{-r}b\in \got{d}^{-1}$, if and only if $b\in \got{l}^{r}\got{d}^{-1}$. After the injectivity is established, the proof can then be completed using a general but elementary fact: if $G$ is a finite abelian $\ell$-group killed by $\ell^t$ and $R$ a ring containing a primitive $\ell^t$-th root of unity, then $\#G = \#\Hom(G,R^\times)$. Indeed, as the finite abelian group $\grNvee{N}{r}$ is isomorphic to $\grN{r}{N}$ and is killed by $\ell^{N+r}$, the forementioned fact applies, and shows that $\#(\grNvee{N}{r}) = \#\Hom(\grN{r}{N},R^\times)$. Hence, we see that the map in question, known to be injective, is also surjective by counting.
	\end{proof}
	
	\textbf{Example.}
		$K = \Q_\ell$. It is well-known that $(\underline{\Z/\ell^t})^\vee$ is isomorphic to $\boldsymbol{\mu}_{\ell^{t}}$, the group scheme representing the functor $S\mapsto \{\xi\in S: \xi^{\ell^t} = 1\}$. So, if we choose a primitive $\ell^t$-th root of unity $\zeta_t$, Theorem \ref{thm.etalization} in particular provides an isomorphism $(\Z/\ell^{t})^\vee \simeq \boldsymbol{\mu}_{\ell^{t}} \simeq \underline{\ell^{-t}\Z/\Z} \simeq \underline{\Z/\ell^{t}}$. Furthermore, we may write it down explicitly in terms of affine algebras:
		\[\cali{A}_{\bmu_{\ell^{t}}}\simeq R[X]/(X^{\ell^{t}}-1) \xrightarrow{\sim} \cali{A}_{\underline{\Z/\ell^{t}}} \simeq 
		\oplus_{i\in \Z/\ell^{t}} R\delta_i,\quad X\mapsto \sum_{i\in\Z/\ell^{t}} \zeta_{t}^i \delta_i.\]
		
		This map is an isomorphism, since there exists $f_i(X) = \frac{\prod_{j\ne i}(X-\zeta_t^j)}{\prod_{j\ne i}(\zeta_t^i-\zeta_t^j)} \in R[X]$ (the denominator is allowed as $\ell^{-1} \in R$), which enjoys the property $f_i(\zeta_t^j) = 0$ (resp.~$=1$) if $j\ne i$ (resp.~$j=i$). Moreover, we see here that the isomorphism indeed depends on the $\zeta_{t}$ chosen.
	
	\vspace{3mm}
		
	As $\op$ contains all $\ell$-power roots of unity, all groups $\G_{N,r}$ can be made constant group schemes \textit{by choosing a primitive $\ell^{N+r}$-th root of unity $\zeta_{N,r}$ for each pair $(N,r)$}. So, we have an isomorphism that may or may not be canonical: $\cali{D}(K,\op) \approx \plim_{n}\ilim_{r}\plim_{N}\cali{A}_{\grNvee{N}{r}}\otimes\opn$, where the limit over $n$ is just projection by congruences, and the limit/colimit over $r,N$ are given by, after base change to $\op/p^n$, maps $\psi^{r,N},\psi_{N,r}$ coming from commutative diagrams:
	\[\begin{tikzcd}
	\Hom_{\op}(\cali{C}(\grN{r}{N},\op),\op) \ar[r,"\sim"] \ar[d]&
	\cali{A}_{\G_{N,r}} \ar[r,"\sim"] \ar[d] & \cali{A}_{\GrNvee{N}{r}} \ar[d,"\psi^{r,N}"]\\
	\Hom_{\op}(\cali{C}(\grN{r-1}{N},\op),\op) \ar[r,"\sim"] & 
	\cali{A}_{\G_{N,r+1}} \ar[r,"\sim"] & \cali{A}_{\GrNvee{N}{r+1}}
	\end{tikzcd}\]
	
	and
	\[\begin{tikzcd}
	\Hom_{\op}(\cali{C}(\grN{r}{N+1},\op),\op) \ar[r,"\sim"] \ar[d]&
	\cali{A}_{\G_{N+1,r}} \ar[r,"\sim"] \ar[d] & \cali{A}_{\GrNvee{N-1}{r}} \ar[d,"\psi_{N,r}"]\\
	\Hom_{\op}(\cali{C}(\grN{r}{N},\op),\op) \ar[r,"\sim"] & 
	\cali{A}_{\G_{N,r}} \ar[r,"\sim"] & \cali{A}_{\GrNvee{N}{r}}
	\end{tikzcd}\]

	where all group schemes are over $\op$, and the left vertical maps are induced from restriction and inclusion of functions respectively. We also remind the reader that the two left squares are canonical while the two right squares, in particular $\psi^{r,N}$'s and $\psi_{N,r}$'s, depend on the choice of $\zeta_{N,r}$'s. We would like to identify $\cali{D}(K,\op)$ with uniformly continuous functions by the linearization \eqref{equation.linearization}, so we need a result about ``functoriality'' stated below. Note in the following we do not distinguish $\cali{A}_{\GrNvee{N}{r}}$ from $\cali{C}(\grNvee{N}{r}, \op)$.
	
	\begin{prop}
		\label{prop.functoriality}
		Suppose the roots of unity $\{\zeta_{N,r}\}_{N,r}$ are chosen in such a way that there exists $\{\zeta_n\}_{n\ge 0}$, where each $\zeta_n$ is a primitive $\ell^n$-th root of unity, satisfying $\zeta_{t+1}^\ell = \zeta_t$ and $\zeta_{N,r} = \zeta_{N+r}$. Then the map $\psi^{r,N}$ is induced from the natural projection $\grNvee{N}{r+1} \to \grNvee{N}{r}$, so $\psi^{r,N}$ is inclusion of continuous functions on $\grNvee{N}{r}$ to those on $\grNvee{N}{r+1}$; and the map $\psi_{N,r}$ is induced from the inclusion of groups $\grNvee{N}{r} \to \grNvee{N-1}{r}$, so $\psi_{N,r}$ is restriction of continuous functions on $\grNvee{N-1}{r}$ to those on $\grNvee{N}{r}$.
	\end{prop}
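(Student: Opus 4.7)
The plan is to unwind the definitions of $\psi^{r,N}$ and $\psi_{N,r}$ via Cartier duality and then verify, via the explicit formula $x_b=\sum_{a}\zeta^{\tr(ab)}\delta_a$ provided by Theorem \ref{thm.etalization}, that they agree with the tautological pullback/restriction maps on the dual side. By construction, the left vertical arrows in the two commutative diagrams are obtained by dualizing the restriction of functions $\cali{C}(\grN{r+1}{N},\op)\to \cali{C}(\grN{r}{N},\op)$ and the pullback inclusion $\cali{C}(\grN{r}{N},\op)\hookrightarrow \cali{C}(\grN{r}{N+1},\op)$ along the projection $\grN{r}{N+1}\twoheadrightarrow \grN{r}{N}$. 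Under Cartier duality these become, respectively, the morphism of group schemes $\G_{N,r+1}\to \G_{N,r}$ given by restricting a character of $\grN{r+1}{N}$ to its subgroup $\grN{r}{N}$, and the morphism $\G_{N,r}\to \G_{N+1,r}$ given by inflating a character of $\grN{r}{N}$ along the projection $\grN{r}{N+1}\twoheadrightarrow\grN{r}{N}$. After transport along the identification $\G_{N,r}\simeq \GrNvee{N}{r}$, it will suffice to show that the first corresponds to the natural projection $\grNvee{N}{r+1}\twoheadrightarrow \grNvee{N}{r}$ and the second to the natural inclusion $\grNvee{N}{r}\hookrightarrow \grNvee{N+1}{r}$; pullback and restriction of functions on the dual side then appear automatically.

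The substantive check is a direct computation using $x_b$. For $\psi^{r,N}$, pick $b\in \grNvee{N}{r+1}$ and let $\bar b\in \grNvee{N}{r}$ be its image; for any $a\in \got{l}^{-r}\got{o}_K$, changing $b$ by an element of $\got{l}^r\got{d}^{-1}$ alters $\tr(ab)$ by an element of $\tr(\got{d}^{-1})\subseteq \Z_\ell$, so $\tr(ab)\equiv \tr(a\bar b) \pmod{\Z_\ell}$, and the compatibility $\zeta_{t+1}^{\ell}=\zeta_t$ ensures that the value $\zeta^{\tr(ab)}$ depends only on $\tr(ab)\bmod \Z_\ell$; hence $x_b|_{\grN{r}{N}} = x_{\bar b}$, identifying the restriction of characters with the projection $b\mapsto \bar b$. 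The computation for $\psi_{N,r}$ is symmetric: given $b\in \grNvee{N}{r}$ (viewed inside $\grNvee{N+1}{r}$) and $a\in \grN{r}{N+1}$ with image $\bar a\in \grN{r}{N}$, the difference $(a-\bar a)b$ lies in $\got{l}^{N}\cdot \got{l}^{-N}\got{d}^{-1}=\got{d}^{-1}$, so $\tr(ab)\equiv \tr(\bar a b)\pmod{\Z_\ell}$, which says that the character $x_b^{(N+1)}$ on $\grN{r}{N+1}$ is the inflation of $x_b^{(N)}$ on $\grN{r}{N}$, identifying the inflation map with the inclusion $\grNvee{N}{r}\hookrightarrow \grNvee{N+1}{r}$.

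I do not foresee a genuine obstacle, as this proposition is essentially naturality bookkeeping. The one point to handle carefully is how the hypothesis $\zeta_{N,r}=\zeta_{N+r}$ with $\zeta_{t+1}^\ell=\zeta_t$ enters: it is precisely what makes the single symbol $\zeta^{\tr(\cdot)}$ well-defined and stable across the various $(N,r)$, so that the explicit isomorphisms furnished by Theorem \ref{thm.etalization} for different indices actually piece together into the naive projection and inclusion maps between the $\grNvee{N}{r}$'s; without it, one would only obtain the correct maps up to a twisting cocycle coming from the failure of the $\zeta_{N,r}$'s to be compatible.
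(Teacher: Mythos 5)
Your proof is correct and is essentially the same as the paper's: both reduce to checking, at the level of groups, that the explicit pairing $b \mapsto \bigl(a\mapsto \zeta^{\tr(ab)}\bigr)$ is compatible with the natural projections and inclusions, with the compatibility $\zeta_{t+1}^{\ell}=\zeta_t$ doing exactly the work you identify. You additionally spell out the $\psi_{N,r}$ case, which the paper dispatches with ``proved in a similar fashion.''
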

	
	\textit{Remark.} Essentially this proposition says that, when the roots of unity $\zeta_{N,r}$'s are chosen compatibly, the limit/colimit of affine algebras is taken with respect to ``natural'' maps. We note that this ``functoriality'' result is in fact clear in view of $\got{l}$-divisible groups, although we will not need this perspective. For instance, we explain briefly how it works for $\psi_{N,r}$: Fix $r\in \Z_{\ge 0}$ and consider the two $\got{l}$-divsible groups over $\op$, $\Scr{G}_1 = \ilim_{N \to \infty}\GrNvee{N}{r}$ and $\Scr{G}_2 = \ilim_{N \to \infty}(\GrN{r}{N})^\vee = \ilim_{N \to \infty} \G_{N,r}$. To prove the functoriality of $\psi_{N,r}$ it suffices to show that a choice of compatible system of primitive $\ell$-power roots of unity gives an isomorphism between $\Scr{G}_1$ and $\Scr{G}_2$, and this isomorphism restricts to the same isomorphism $\GrNvee{N}{r} \to \G_{N,r}$ as in Theorem \ref{thm.etalization} for all $N$. To give this isomorphism, as both $\Scr{G}_1$ and $\Scr{G}_2$ are \'etale, we can instead give it for the corresponding Tate-modules, $T_{\got{l}}(\Scr{G}_1) = \got{l}^r\got{d}^{-1}$ and $T_{\got{l}}(\Scr{G}_2) = \Hom_{\Z_\ell}(\got{l}^{-r}\got{o}_K,\Z_\ell(1))$. Denote by $\zeta \in \Z_\ell(1)$ the chosen compatible system of $\ell$-power roots of unity, and we can form the following isomorphism of finite free $\got{o}_K$-modules with trivial Galois action:
	\[\got{l}^r\got{d}^{-1} \xrightarrow{\tr} \Hom_{\Z_\ell}(\got{l}^{-r}\got{o}_K,\Z_\ell)\xrightarrow{x \mapsto x\otimes \zeta} \Hom_{\Z_\ell}(\got{l}^{-r}\got{o}_K,\Z_\ell)\otimes_{\Z_\ell} \Z_\ell(1) \simeq \Hom_{\Z_\ell}(\got{l}^{-r}\got{o}_K,\Z_\ell(1)).\] 
	
	Moreover, it is clear that the above map sends $x\in \got{l}^{r}\got{d}^{-1}$ to the linear map whose value at $y\in \got{l}^{-r}\got{o}_K$ is $\zeta^{\tr(xy)}$. In turn, by chasing the definition of the isomorphism in Theorem \ref{thm.etalization}, it can be shown that the two isomorphisms coincide at the piece $\GrNvee{N}{r} \to \G_{N,r}$. 
	
	\begin{proof}[Proof of Proposition \ref{prop.functoriality}]
		We prove here the statement about $\psi^{r,N}$; for $\psi_{N,r}$ it can be proved in a similar fashion. Recall from the proof of Theorem \ref{thm.etalization} that we constructed the isomorphism $\GrNvee{N}{r} \xrightarrow{\sim} \G_{N,r}$ by sheafifying the group homomorphisms $\phi_{r}: \grNvee{N}{r} \to \Hom(\grN{r}{N},R^\times)$. Thus, to prove the statement about group schemes, by functoriality, it suffices to prove the statement for groups. That is, to show the following diagram commutes:
		\begin{displaymath}
		\begin{tikzcd}
		\grNvee{N}{r+1} \ar[d,"\rm proj."] \ar[r,"\phi_{r+1}"] & \Hom(\grN{r+1}{N}, R^\times) \ar[d,"\rm incl.^*"]\\
		\grNvee{N}{r} \ar[r,"\phi_{r}"] & \Hom(\grN{r}{N},R^\times)
		\end{tikzcd}
		\end{displaymath}
	
		where the right vertical arrow is pullback by inclusion of groups. It is easily seen that the commutativity boils down to checking for all $a\in \grNvee{N}{r+1}, b\in \grN{r}{N}$, $\zeta_{N,r+1}^{[\ell^{N+r+1}]\tr(ab)} = \zeta_{N,r}^{[\ell^{N+r}]\tr(ab)}$. As $\tr(ab) \in \ell^{-N-r}\Z_\ell/\Z_\ell$, we see that
		\[
			\zeta_{N,r+1}^{[\ell^{N+r+1}]\tr(ab)} = \zeta_{N,r+1}^{\ell [\ell^{N+r}]\tr(ab)} = (\zeta_{N,r+1}^\ell)^{[\ell^{N+r}]\tr(ab)} = \zeta_{N,r}^{[\ell^{N+r}]\tr(ab)},
		\] 
		
		where the last equality follows from the compatibility of our particular choice of roots of unity.
	\end{proof}
	
	\textbf{Notation.} 
	\begin{enumerate}
		\item When a compatible system of $\ell$-power roots of unity is chosen, say $\{\zeta_n\}_{n\ge 0}$, for any $r \in \Q_\ell$ we shall use the shorthand $\zeta^r$ to mean $\zeta_t^{r\ell^t}$ if $\ord_\ell(r) = -t<0$ and $\zeta^r =1$ if $r\in \Z_\ell$. Of course, in the former case, for any $N\ge t$, $\zeta_N^{r\ell^N} = \zeta_t^{r\ell^t}$, so we really have $\zeta^r = \lim_{N \to \infty} \zeta_N^{r\ell^N}$. Note also this notation is already present in Theorem \ref{thm.etalization} and Lemma \ref{lem.pairing-perfect}, and our use is consistent. Finally we point out for each $r\in \Q_\ell$, the function $\zeta^{rx}: \Q_\ell \to \op$ is uniformly continuous.
		
		\item Instead of saying ``let $\{\zeta_n\}_{n\ge0}$ be a system of primitive $\ell$-power roots of unity such that $\zeta_{t+1}^\ell =\zeta_t$ for all $t$'', we will simply say ``let $\zeta\in T_\ell(\G_m)$ be \textit{generative}'', where \textit{generative} should be understood as either being a topological group generator, or a generator for the $\Z_\ell$-module.
	\end{enumerate}
	 
	We may now summarize our results below; even better, as a byproduct, we recover the familiar formula for the Fourier transform (we are using $\int_K f(x) \mu(x)$ to mean the evaluation of $\mu$ at $f$):
	\begin{cor}
		\label{cor.fourier1}
		Let $\zeta \in T_\ell(\G_m)$ be generative. Then there is an isomorphism depending on $\zeta$:
		\begin{align}
		\label{equation.measure-linearization}
		\cali{D}(K,\op) \xrightarrow{\sim} \plim_{n}\ilim_{r:\rm incl.}\plim_{N: \rm proj.}\cali{C}(\grNvee{N}{r},\opn)
		\hookrightarrow
		\cali{C}(K,\op),
		\end{align}
	
		which identifies $\cali{D}(K,\op)$ as uniformly continuous functions on $K$. Explicitly, it is given by the map:
		\[\mu\longmapsto \hat{\mu}_{\zeta}(r) = \int_{K}\zeta^{\tr(rx)} \mu(x)\qquad\text{for all } r\in K.\]
	\end{cor}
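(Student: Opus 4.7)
The proof assembles the three preceding results---Corollary \ref{cor.measure-linearization}, Theorem \ref{thm.etalization}, and Proposition \ref{prop.functoriality}---and then reads off the explicit formula by chasing Cartier duality at the level of a single point. Concretely, Corollary \ref{cor.measure-linearization} linearises $\cali{D}(K,\op)$ as $\plim_n\ilim_r\plim_N \Hom_{\op}(\cali{C}(\grN{r}{N},\opn),\opn)$. For each triple $(N,r,n)$, the identification $\cali{C}(\grN{r}{N},\opn) \simeq \cali{A}_{\GrN{r}{N}}\otimes\opn$ together with Cartier duality realises the $\Hom$-set as $\cali{A}_{\G_{N,r},\opn}$, and since the generative $\zeta$ provides a compatible system of primitive $\ell^{N+r}$-th roots of unity, Theorem \ref{thm.etalization} further identifies it with $\cali{C}(\grNvee{N}{r},\opn)$. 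Proposition \ref{prop.functoriality}, applied to the compatible choice $\zeta_{N,r} = \zeta_{N+r}$, then guarantees that the transition maps in $r$ and $N$ become pullback-along-projection and restriction-along-inclusion respectively, yielding the first arrow of \eqref{equation.measure-linearization}.

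Next, I would verify that the middle term $\plim_n\ilim_r\plim_N \cali{C}(\grNvee{N}{r},\opn)$ sits inside $\cali{C}(K,\op)$ precisely as the subspace of uniformly continuous functions. For fixed $n$ and $r$, the inverse limit $\plim_N \cali{C}(\grNvee{N}{r},\opn)$ along restriction to $\grNvee{N}{r}\hookrightarrow\grNvee{N+1}{r}$ is $\cali{C}(K/\got{l}^r\got{d}^{-1},\opn)$; taking the colimit in $r$ along pullback by $K/\got{l}^{r+1}\got{d}^{-1}\twoheadrightarrow K/\got{l}^r\got{d}^{-1}$ gives exactly the functions $K\to\opn$ which factor through $K/\got{l}^M$ for some $M$; finally $\plim_n$ produces the uniformly continuous functions $K\to\op$. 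The inclusion into $\cali{C}(K,\op)$ is then the tautological one coming from \eqref{equation.linearization}.

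For the explicit formula, it suffices to compute at an individual point $b \in \grNvee{N}{r}$. Under Theorem \ref{thm.etalization}, $b$ corresponds to the grouplike unit $x_b = \sum_{a\in\grN{r}{N}} \zeta^{\tr(ab)}\delta_a$ in $\cali{A}_{\GrN{r}{N}}\otimes\opn$, and under the Cartier-duality identification, the evaluation at $b$ of the image in $\cali{C}(\grNvee{N}{r},\opn)$ of a functional $\mu$ on $\cali{C}(\grN{r}{N},\opn)$ is simply $\mu(x_b)$. Hence
\[
\hat{\mu}_\zeta(b) \;=\; \mu(x_b) \;=\; \sum_{a\in\grN{r}{N}} \zeta^{\tr(ab)}\,\mu(\1_{a+\got{l}^N}) \;=\; \int_{K} \zeta^{\tr(bx)}\,\mu(x),
\]
where the last equality uses that for $b \in \got{l}^{-N}\got{d}^{-1}$ and $x-a \in \got{l}^N$ one has $\tr(b(x-a)) \in \tr(\got{d}^{-1}) \subseteq \Z_\ell$, so that $\zeta^{\tr(bx)}$ is constant on each coset $a+\got{l}^N$. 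The only step requiring genuine care is the bookkeeping in Proposition \ref{prop.functoriality} ensuring that the various transition maps stabilise compatibly as $(N,r)$ vary; once that is in hand, the formula passes to the limit and holds for every $r \in K$.
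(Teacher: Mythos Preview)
Your proposal is correct and follows essentially the same approach as the paper's own proof: both assemble Corollary \ref{cor.measure-linearization}, Theorem \ref{thm.etalization}, and Proposition \ref{prop.functoriality} to obtain the isomorphism, and both compute the explicit formula by unwinding the Cartier-duality map at the level of a single finite piece via the grouplike element $x_b = \sum_a \zeta^{\tr(ab)}\delta_a$. The paper is slightly terser (it declares that ``only the explicit formula requires a proof'' and writes the finite-level map as $l \mapsto \sum_b \delta_b\, l(x_b)$), whereas you additionally spell out why the middle term of \eqref{equation.measure-linearization} consists of uniformly continuous functions; this elaboration is sound and matches the earlier discussion in the paper.
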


	\textit{Remarks.} 
	\begin{enumerate}
		\item Note that $\zeta^{ab}$ is in general not equal to $(\zeta^a)^{b}$ since $a,b$ take values in $\Q_\ell$.
		
		\item We could have taken $\int_{K}\zeta^{-\tr(rx)} \mu(x)$ to be the definition of $\hat{\mu}_\zeta(r)$ and still get an isomorphism, which is the common convention in classical Fourier analysis on real numbers.
		
		\item When $\zeta$ is fixed, we shall abuse the notation and just write $\hat{\mu}$.
	\end{enumerate}

	\begin{proof}[Proof of Corollary \ref{cor.fourier1}]
		Only the explicit formula requires a proof. We note that 
		\[\cali{D}(K,\op) \simeq \plim_{n}\ilim_{r}\plim_{N}\cali{C}(\grNvee{N}{r},\opn)\]
		
		is obtained by taking the projective limit of
		\[\ilim_{r}\plim_{N} \Hom_{\op}(\cali{C}(\grN{r}{N},\opn),\opn)\simeq \ilim_{r}\plim_{N}\cali{C}(\grNvee{N}{r},\opn),\]
		
		which in turn is obtained by gradually enlarging the isomorphism
		\[\plim_{N} \Hom_{\op}(\cali{C}(\grN{r}{N},\opn),\opn) \simeq \plim_{N}\cali{C}(\grNvee{N}{r},\opn),\]
		
		and which in turn is the projective limit of
		\[\Hom_{\op}(\cali{C}(\grN{r}{N},\opn),\opn) \simeq \cali{C}(\grNvee{N}{r},\opn).\]
		
		The last isomorphism, however, is implicit in steps 2 and 4 in the proof of Theorem \ref{thm.etalization}. Using the notation from there, we see that the map is given by (base change to $\opn$ of)
		\[l \mapsto \sum_{b\in\grNvee{N}{r}}\delta_b l(x_b) = \sum_{b\in\grNvee{N}{r}}\delta_b\sum_{a\in\grN{r}{N}}\zeta^{\tr(ab)}l(a+\got{l}^N)\in \cali{A}_{\grNvee{N}{r}}\simeq \cali{C}(\grNvee{N}{r},\op).\]
		
		Thus, for any $b\in \grNvee{N}{r}$, $\hat{l}(b) = \int_{\grN{r}{N}}\zeta^{\tr(bx)}l(x)$. Chasing back, we then obtain the formula claimed above.
	\end{proof}
	
	\paragraph{Example: Fourier transform on $\Z_\ell$.}
	In the following we fix some $\zeta\in T_\ell(\G_m)$ that is generative. For the $\cali{C}(\Q_\ell,\op)$-module $\cali{D}(\Q_\ell,\op)$, we have a submodule $\cali{D}(\Z_\ell,\op) = \cali{D}(\Q_\ell,\op)\1_{\Z_\ell}$, which has a compatible linearization:
	\[
	\begin{tikzcd}
		\cali{D}(\Q_\ell,\op) \ar[r,"\sim"] &\plim_{n}\ilim_{r}\plim_{N}
		\cali{C}(\ell^{-N}\Z_\ell/\ell^r\Z_\ell,\opn) \ar[r,hook]
		&
		\cali{C}(\Q_\ell,\op)\\
		\cali{D}(\Z_\ell,\op) \ar[r,"\sim"] \ar[u,hook]
		&\plim_{n}\plim_{N} \cali{C}(\ell^{-N}\Z_\ell/\Z_\ell,\opn) \ar[r,equal] \ar[u,hook]
		&\cali{C}(\Q_\ell/\Z_\ell,\op) \ar[u,hook]
	\end{tikzcd}
	\]
	
	where the last vertical arrow is given by regarding a function on $\Q_\ell/\Z_\ell$ as a continuous function on $\Q_\ell$ that is constant on $\Z_\ell$-cosets. Note that since $\Q_\ell/\Z_\ell$ is discrete, $\cali{C}(\Q_\ell/\Z_\ell,\op)$ really consists of arbitrary functions from $\Q_\ell/\Z_\ell$ to $\op$. Moreover, the isomorphism $\cali{D}(\Z_\ell,\op) \xrightarrow{\sim} \cali{C}(\Q_\ell/\Z_\ell,\op)$ recovers the Fourier transform utilized in \cite{Si87}, especially Proposition 2.1 \textit{loc.~cit.}, by our choice of a generative $\zeta$. To illustrate this isomorphism, below we list some elements in $\cali{C}(\Q_\ell/\Z_\ell,\op)$ and the corresponding elements in $\cali{D}(\Z_\ell,\op)$:
	\begin{itemize}
		\item The Dirac function $\underline{\delta_a}$, where $a\in \Q_\ell/\Z_\ell$, is the Fourier transform of $\nu_a(x) =\zeta^{-ax}dx$, where $dx$ is the Haar measure on $\Z_\ell$ with $\int_{a+\ell^n\Z_\ell}dx = \ell^{-n}$ for all $a\in \Q_\ell,n\in \Z_{\ge 0}$. In particular, we have $\widehat{dx} = \underline{\delta_0}$.
		
		\item The function $\zeta^{bx}$ for any $b\in \Z_\ell$ is the Fourier transform of the Dirac measure $\overline{\delta_b}$. Note that if $b\notin \Z_\ell$, then $\zeta^{bx}$ will not be constant on $\Z_\ell$-cosets, meaning that in this case $\zeta^{bx}$ is not in the image of $\cali{D}(\Z_\ell,\op)$.
		
		\item Let $\chi: (\Z_\ell/\ell)^\times\to \C_p^\times$ be a non-trivial character. Let $R_\chi$ be the function on $\Q_\ell/\Z_\ell$ given by (\textit{cf.~}§4 of \cite{Si87})
		\begin{align*}
			R_\chi(x) = 
			\begin{cases}
				0 & \text{if } x\in \frac{1}{\ell}\Z_\ell/\Z_\ell;\\
				\sum_{1\le a<\ell} \chi(a)\frac{\zeta^{ax}}{1-\zeta^{\ell x}} & \text{otherwise.}
			\end{cases}
		\end{align*}
		
		Via direct computation, one can show that $R_\chi = \hat{\mu}_\chi$, where $\mu_\chi\in \cali{D}(\Z_\ell,\op)$ is such that for any $n\in \Z_{\ge 0}$ and $0\le a<\ell^n$:
		\begin{align*}
			\mu_\chi(a+\ell^n\Z_\ell) = \begin{cases}
				0 & \text{if } n=0,1;\\
				\chi(a)\ell^{1-n}(V(a) - \frac{\ell^{n-1}-1}{2}) & \text{if }n\ge 2,
			\end{cases}
		\end{align*}
		
		where for $x = \sum_{i\ge 0} x_i \ell^i \in \Z_\ell$ with $0\le x_i<\ell$ for all $i$, $V(x) = \sum_{i\ge 0} x_{i+1}\ell^i$. As such, in the terminology of \cite{Si87}, a rational function measure attached to $\sum_{1\le a<\ell}\chi(a)\frac{X^a}{1-X^\ell}$ necessarily takes the form $\mu_\chi + \sum_{a\in \Q_\ell/\Z_\ell} \lambda_a \nu_a$, where all but finitely many $\lambda_a\in \C_p$ are zero. 
	\end{itemize}
	
	\subsection{The Fourier tranform}
	
	Recall our setting: $K/\Q_\ell$ is finite, $\got{o}_K$ is the valuation ring, $\got{l}$ is the maximal ideal, and $\pi$ is a chosen uniformizer. Let further $q= \nm(\got{l})$, where $\nm(\got{a}) = \#\got{o}_K/\got{a}$ for any nonzero ideal $\got{a}\subset \got{o}_K$, $\got{d}$ be the different ideal of $\got{o}_K/\Z_\ell$, and $\delta \in \Z_{\ge 0}$ be such that $q^{\delta} = \nm(\got{d})$. 
	
	\vspace{3mm}
	
	Although the isomorphism in Corollary \ref{cor.fourier1} already deserves to be called the Fourier transform (attached to some generative $\zeta$), for our purpose we need a finer version that conforms to the classical (e.g., $L^2$-) Fourier theory. Namely we want to introduce Haar measures and obtain an endomorphism of functions on $K$. Following Tate, we denote the Haar measure of mass $\nm{\got{d}}^{-1/2}=q^{-\delta/2}$ on $\got{o}_K$ by $dx$. Recall, a priori, it is a linear functional on the space of $\got{o}_p$-valued Schwartz-Bruhat functions on $K$:
	\[
		\cali{S} = \cali{S}(K,\got{o}_p) = \left\{\sum_{\substack{E\subset K:\\\textrm{open compact}}}
		\lambda_{E}\1_{E} : \lambda_{E}\in \op, \textrm{ almost all }\lambda_{E}=0\right\} = \ilim_{N,r}\cali{C}(\grN{r}{N},\op),
	\]
	
	and it pairs with $\1_{a+\got{l}^N}$ to $q^{-N-\delta/2}$. We remark that $dx\in \Hom_{\op}(\cali{S},\op)$ is not in $\cali{D}(K, \got{o}_p)$, which follows from
	\begin{lem}
		\label{lem.S-extension}
		Let $L \in \cali{D}(K,\op)$ and recall the canonical projection $p_{n,r}: \cali{C}(K,\got{o}_p) \to 
		\cali{C}(\got{l}^{-r}\got{o}_K,\got{o}_p/p^n)$ in Proposition \ref{prop.topological-linearization}. With the linearization in \eqref{equation.pre-linearization-measure}, modulo $p^n$, $L\in \plim_{N}\Hom_{\op}(\cali{C}(\grN{r}{N},\op),\opn)$ if and only if $L$ is zero when restricted to $\ker(p_{n,r})$. Conversely, for $l \in \Hom_{\op}(\cali{S},\op)$ there is at most one $L \in \cali{D}(K,\op)$ such that $L|_{\cali{S}} = l$; and the existence is guaranteed if: 
		\begin{align}
		\tag{*}\label{condition.star}
		\text{for all } n\ge 1, \text{ there exists } r \ge 0, \text{ such that } l|_{\cali{S}\cap \ker(p_{n,r})} = 0.
		\end{align}
	\end{lem}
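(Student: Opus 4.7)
The lemma has three essentially independent parts: the characterization of when $L \bmod p^n$ factors through the $(n,r)$-term, uniqueness of extensions from $\cali{S}$, and their existence under \eqref{condition.star}. A single technical observation underlies all three: the restriction-reduction map $p_{n,r}\colon \cali{C}(K,\op) \to \cali{C}(\got{l}^{-r}\got{o}_K,\opn)$ is surjective. Indeed, any $g\in \cali{C}(\got{l}^{-r}\got{o}_K,\opn)$ is locally constant (the target is discrete, the source compact), so one may lift $g$ set-theoretically to $\cali{C}(\got{l}^{-r}\got{o}_K,\op)$ and then extend by $0$ outside the clopen subset $\got{l}^{-r}\got{o}_K$ to produce a continuous preimage in $\cali{C}(K,\op)$.

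For the characterization, unraveling \eqref{equation.pre-linearization-measure} identifies the $(n,r)$-term $\plim_{N}\Hom_{\op}(\cali{C}(\grN{r}{N},\op),\opn)$ with $\Hom^{\cts}_{\op}(\cali{C}(\got{l}^{-r}\got{o}_K,\opn),\opn)$, as any continuous map to discrete $\opn$ factors through some finite quotient $\cali{C}(\grN{r}{N},\opn)$. By the universal property of the quotient together with surjectivity of $p_{n,r}$, a functional on $\cali{C}(K,\op)$ reduced mod $p^n$ factors through $\cali{C}(\got{l}^{-r}\got{o}_K,\opn)\simeq \cali{C}(K,\op)/\ker(p_{n,r})$ if and only if it kills $\ker(p_{n,r})$, yielding the claimed equivalence.

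For uniqueness, the plan is to show $\cali{S}$ is dense in $\cali{C}(K,\op)$. Given $f\in \cali{C}(K,\op)$ and indices $n,r$, the mod-$p^n$ restriction $f|_{\got{l}^{-r}\got{o}_K} \bmod p^n$ is locally constant, so by the same lift-and-extend-by-zero procedure it admits a preimage $g\in \cali{S}$, whence $f-g\in \ker(p_{n,r})$. Since by Proposition \ref{prop.topological-linearization} the kernels $\ker(p_{n,r})$ form a fundamental neighborhood system at $0$, this establishes density, and continuity forces any two extensions of $l$ to coincide on $\cali{C}(K,\op)$.

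For existence under \eqref{condition.star}, pick an increasing sequence $\{r(n)\}_{n\ge 1}$ with $l$ vanishing on $\cali{S}\cap \ker(p_{n,r(n)})$. For $f\in \cali{C}(K,\op)$, choose (by density) $g_n\in \cali{S}$ with $f-g_n\in \ker(p_{n,r(n)})$ and set $L_n(f) := l(g_n) \bmod p^n$; well-definedness is immediate because if $g,g'$ are two such choices then $g-g'\in \cali{S}\cap \ker(p_{n,r(n)})$, which is killed by $l$ by hypothesis. Linearity descends from $l$, and continuity of $L_n$ is automatic since $\ker(p_{n,r(n)})$ is an open subgroup in its kernel. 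Compatibility across $n$ uses monotonicity of $r(n)$: $g_{n+1}$ also serves as a valid choice of $g_n$, whence $L_n(f) \equiv L_{n+1}(f) \pmod{p^n}$. Assembling $L := \varprojlim_n L_n \in \cali{D}(K,\op)$ and taking $g_n = f$ for $f\in \cali{S}$ verifies $L|_{\cali{S}} = l$. None of these steps is genuinely difficult; the only conceptual point, if any, is recognizing \eqref{condition.star} as exactly the condition needed to make $l(g_n) \bmod p^n$ independent of the approximant.
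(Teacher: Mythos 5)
Your proposal is correct and follows essentially the same route as the paper's proof. Both hinge on the identification $\cali{C}(K,\op)/\ker(p_{n,r}) = \cali{C}(\got{l}^{-r}\got{o}_K,\opn) = \cali{S}/(\cali{S}\cap\ker(p_{n,r}))$: the paper states this equality directly and uses it to conclude density and to factor $l$ through each quotient into a compatible system $(l_n)_n$; you unpack the same fact through the surjectivity of $p_{n,r}$ (lift-and-extend-by-zero) and construct the $L_n$ explicitly via approximants $g_n\in\cali{S}$, with the monotone choice of $r(n)$ handling compatibility. The two expositions differ only in level of detail, not in substance.
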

	
	\textit{Remark.} Clearly, \textit{restriction to some $\ker(p_{n,r})$ is zero} means the linear functional is compactly supported modulo $p^n$. Since for any fixed $n>0$, $dx(\1_{\pi^{-r}+\got{o}_K}) = q^{-\delta/2} \not\equiv 0 \bmod p^n$ for all $r>0$, we see that $dx$ is not the restriction of any $L\in \cali{D}(K,\op)$ to $\cali{S}$.
	
	\begin{proof}[Proof of Lemma \ref{lem.S-extension}]
		By the proof of Corollary \ref{cor.measure-linearization}, the first claim is clear. Next consider any $l \in \Hom_{\op}(\cali{S},\op)$: since for any $n,r$, $\cali{C}(K,\op)/\ker(p_{n,r}) = \cali{C}(\got{l}^{-r}\got{o}_K,\opn) = \cali{S}/\cali{S}\cap \ker(p_{n,r})$, we see that $\cali{S}$ is dense in $\cali{C}(K,\op)$. Thus there is at most one $L$ extending $l$. Now for any $l$ satisfying \eqref{condition.star}, for each $n\ge 1$, $l$ naturally gives rise to $\l_n \in \Hom_{\op}(\cali{S}/\cali{S}\cap\ker(p_{n,r}),\opn) = \Hom_{\op}(\cali{C}(K,\op)/\ker(p_{n,r}),\opn)\subset \ilim_{r}\Hom_{\op}(\cali{C}(\got{l}^{-r}\got{o}_K,\op),\opn)$. When we vary $n$, the $l_n$'s are apparently compatible and hence form 
		\[L = (l_n)_n \in \plim_{n}\ilim_{r}\Hom_{\op}(\cali{C}(\got{l}^{-r}\got{o}_K,\op),\opn) \simeq \cali{D}(K,\op).\] 
		
		It is clear that $L$ extends $l$.
	\end{proof}
	
	Although $dx$ is not in $\cali{D}(K,\op)$, we have the following easy consequence of Lemma \ref{lem.S-extension}:
	\begin{cor}
		\label{cor.basic-functions}
		We have $\1_{\got{o}_K}(x)dx\in \cali{D}(K,\got{o}_p)$. In fact, given a generative $\zeta\in T_\ell(\G_m)$, $(\1_{\got{o}_K}dx)^\wedge_{\zeta} = q^{-\delta/2}\1_{\got{d}^{-1}}$. Similarly, we have $\1_{a+\got{l}^N}dx \in \cali{D}(K,\op)$, and $(\1_{a+\got{l}^N} dx)^\wedge_{\zeta}(y) = \zeta^{\tr(ay)}q^{-N-\delta/2}\1_{\got{l}^{-N}\got{d}^{-1}}$.
	\end{cor}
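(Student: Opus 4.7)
The plan is to decompose the claim into two parts: first verify that $\1_{a+\got{l}^N}dx$, which \emph{a priori} is only a linear functional on $\cali{S}$, really extends to an element of $\cali{D}(K,\op)$; and then apply the explicit formula of Corollary \ref{cor.fourier1} to compute its Fourier transform.

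For the extension, I would invoke Lemma \ref{lem.S-extension}. Given $n \ge 1$, I choose $r$ large enough that $a+\got{l}^N \subseteq \got{l}^{-r}\got{o}_K$. Any $f \in \cali{S} \cap \ker(p_{n,r})$ then takes values in $p^n\op$ on $a+\got{l}^N$, and refining partitions I may write $f\cdot \1_{a+\got{l}^N} = \sum_i \lambda_i \1_{F_i}$ with the $F_i$ disjoint open compact subsets of $a+\got{l}^N$ and $\lambda_i \in p^n\op$. Since $\ell \ne p$, the volume $dx(\1_{F_i}) = q^{-m_i - \delta/2}$ lies in $\op^\times$, whence $(\1_{a+\got{l}^N}dx)(f) \in p^n\op$, which is condition (*).

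For the Fourier transform, Corollary \ref{cor.fourier1} reads
\begin{align*}
(\1_{a+\got{l}^N}dx)^\wedge_\zeta(y) = \int_{a+\got{l}^N}\zeta^{\tr(xy)}\,dx.
\end{align*}
I would choose $M \ge N$ large enough that $\tr(y\got{l}^M) \subseteq \Z_\ell$, so that $x\mapsto \zeta^{\tr(xy)}$ is locally constant on cosets of $\got{l}^M$; decomposing $a+\got{l}^N = \bigsqcup_{b\in \got{l}^N/\got{l}^M}((a+b)+\got{l}^M)$ and factoring out $\zeta^{\tr(ay)}$ produces
\begin{align*}
(\1_{a+\got{l}^N}dx)^\wedge_\zeta(y) = q^{-M-\delta/2}\zeta^{\tr(ay)}\sum_{b\in \got{l}^N/\got{l}^M}\zeta^{\tr(by)}.
\end{align*}
If $y \in \got{l}^{-N}\got{d}^{-1}$, then $\tr(\got{l}^N y) \subseteq \Z_\ell$ and each summand is $1$, so the sum equals $q^{M-N}$ and we obtain $\zeta^{\tr(ay)}q^{-N-\delta/2}$; otherwise, the defining property of $\got{d}^{-1}$ yields some $b_0 \in \got{l}^N$ with $\tr(b_0 y) \notin \Z_\ell$, making $b\mapsto \zeta^{\tr(by)}$ a non-trivial character of the finite abelian group $\got{l}^N/\got{l}^M$ valued in the integral domain $\op$, so the sum vanishes by the standard orthogonality argument. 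Specializing $a = 0$, $N = 0$ recovers $(\1_{\got{o}_K}dx)^\wedge_\zeta = q^{-\delta/2}\1_{\got{d}^{-1}}$.

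The main potential snag lies in the extension step rather than the Fourier computation: one must ensure that the normalization $q^{-m-\delta/2}$ actually belongs to $\op$, indeed to $\op^\times$, which crucially uses $\ell \ne p$ so that $q \in \Z_\ell^\times \subseteq \op^\times$. Once that integrality is in place, the Fourier computation is a routine application of character orthogonality, valid because $\op$ is an integral domain containing enough roots of unity.
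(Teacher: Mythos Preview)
Your proof is correct and follows essentially the same route as the paper: Lemma \ref{lem.S-extension} for membership in $\cali{D}(K,\op)$, followed by a direct character-sum computation of the transform. The only structural difference is the order of generality: the paper first computes $(\1_{\got{o}_K}dx)^\wedge_\zeta$ by the same orthogonality argument and then deduces the case of $\1_{a+\got{l}^N}$ from the translation and dilation rules (Proposition \ref{prop.classic} i, ii), whereas you handle arbitrary $a,N$ directly and specialize at the end. Your choice has the mild advantage of avoiding the forward reference to Proposition \ref{prop.classic}; the paper's choice isolates the two standard transform identities so they can be reused.

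One small slip in your closing remark: you write $q \in \Z_\ell^\times \subseteq \op^\times$, but $q = \ell^f$ is certainly \emph{not} a unit in $\Z_\ell$. What you mean (and what the argument needs) is that $q \in \Z_p^\times \subseteq \op^\times$ because $\ell \ne p$, so that $q^{-m-\delta/2}$ is a $p$-adic unit.
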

	
	\begin{proof}
		Since $\1_{\got{o}_K}(x)dx$ is compactly supported, by Lemma \ref{lem.S-extension}, it is in $\cali{D}(K,\op)$. Now for $r\in \pi^{k-\delta}\got{o}_K^\times$, we compute: 
		\begin{displaymath}
			(\1_{\got{o}_K}dx)^\wedge_{\zeta}(r) = \int_{\got{o}_K} \zeta^{\tr(rx)}dx 
			= 
			\begin{cases*}
				\nm(\got{d})^{-1/2} & $k\ge 0$;\\
				\nm(\got{d})^{-1/2}\sum_{a\in\got{o}_K/\got{l}^{-k}}
				\zeta^{\tr(ar)} = 0 & $k< 0$.
			\end{cases*}
		\end{displaymath}
	
		The claim for $\1_{a+\got{l}^N}$ then follows from standard properties of the Fourier transform, e.g., using i) and ii) of Proposition \ref{prop.classic}.
	\end{proof}
	
	\begin{de}
		A \textit{dissipative} function in $\cali{C}(K,\got{o}_p)$ is one that is supported on a compact subset of $K$ modulo $p^n$ for each $n\in \Z_{>0}$.
	\end{de}
	
	\textit{Remarks.} 
	\begin{enumerate}
		\item For a continuous function $f$ on $K$, the condition that $f$ is compactly supported modulo $p^n$ for all $n\ge 0$ will often by paraphrased as $\lim_{x\to \infty} f(x) = 0$.
		
		\item For simplicity, we have included in the definition that dissipative functions are valued in $\op$. However, everything to follow can be extended to ``$\C_p$-valued dissipative functions'', namely those of the form $f = p^{-n} g$ for some $n\in \Z_{\ge 0}$ and dissipative $g\in \cali{C}(K,\op)$.
	\end{enumerate}
	
	\vspace{3mm}
	
	As an example, the characteristic function $\1_{a+\got{l}^N}$ is dissipative since it is already compactly supported. Clearly, dissipative functions form a module over $\cali{C}(K,\op)$. We also have a linearization of dissipative functions compatible with Proposition \ref{prop.topological-linearization} by $\plim_{n}\ilim_{r}\ilim_{N:\textrm{\rm{incl.}}} \cali{C}(\grN{r}{N},\opn)$, where the colimit with respect to $r$ is extension by zero. As such, for all dissipative $f$ and for all $n\ge 0$, there exists $f_n\in \cali{S}$ such that $\sup_{x\in K}|f(x)-f_n(x)|_p <p^{-n}$. The notion of dissipative functions is tailored for $dx$, because again $f(x)dx$ modulo $p^n$ is compactly supported for all $n$. Thus using Lemma \ref{lem.S-extension} again, we conclude:
	
	\begin{cor}
		\label{cor.haar-dissipative-matching}
		For any dissipative $f$, $f(x)dx \in \cali{D}(K,\op)$.
	\end{cor}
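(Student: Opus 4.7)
The plan is to reduce to Lemma \ref{lem.S-extension}: I will construct a candidate linear functional $l$ on $\cali{S}$ representing $g \mapsto \int f(x) g(x)\, dx$, and then verify condition \eqref{condition.star}, so that $l$ extends uniquely to some $L \in \cali{D}(K, \op)$, which will be the desired $fdx$. To define $l$, use the approximation property noted just above the corollary: for each $m$, pick $f_m \in \cali{S}$ with $\sup_K |f - f_m|_p < p^{-m}$. For $g \in \cali{S}$, the product $f_m g$ lies in $\cali{S}$ (as $g$ has compact support), so $dx(f_m g) \in \op$ makes sense, and I will set $l(g) = \lim_m dx(f_m g)$.

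The key technical input is an elementary norm estimate: because $q = \nm(\got{l})$ is coprime to $p$, each atomic value $dx(\1_{a + \got{l}^N}) = q^{-N-\delta/2}$ is a $p$-adic unit, so writing any $h \in \cali{S}$ in its canonical disjoint form $h = \sum_E \mu_E \1_E$ (with the $E$'s a finite partition of $\mathrm{supp}(h)$ into cosets) and applying the ultrametric inequality yields
\[
  |dx(h)|_p \;\le\; \max_E |\mu_E|_p \;=\; \sup_x |h(x)|_p.
\]
Combined with $|g|_p \le 1$, this gives $|dx((f_m - f_{m'})g)|_p \le \sup |f_m - f_{m'}|_p \le p^{-\min(m,m')}$, so $\{dx(f_m g)\}$ is Cauchy in the complete ring $\op$, converges, and the limit is independent of the approximating sequence. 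Linearity of $l$ in $g$ is immediate by passing to the limit.

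To verify \eqref{condition.star}, fix $n \ge 1$, and use the dissipativity of $f$ to pick $r$ so that $|f(x)|_p \le p^{-n}$ for $x \notin \got{l}^{-r}\got{o}_K$; then for $k \ge n$ and any $g \in \cali{S} \cap \ker(p_{n,r})$, the product $f_k g$ satisfies $|f_k(x) g(x)|_p \le p^{-n}$ everywhere: bound $|g|$ by $p^{-n}$ on $\got{l}^{-r}\got{o}_K$, while off $\got{l}^{-r}\got{o}_K$ the inequality $|f_k - f|_p < p^{-n}$ transfers the decay of $f$ to $f_k$, so that $|f_k|_p \le p^{-n}$ there. The norm estimate then gives $|dx(f_k g)|_p \le p^{-n}$, and passing to the limit yields $l(g) \in p^n \op$. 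Hence $l$ vanishes modulo $p^n$ on $\cali{S} \cap \ker(p_{n,r})$, which is what \eqref{condition.star} requires (as its application in the proof of Lemma \ref{lem.S-extension} uses only the mod-$p^n$ reduction). Lemma \ref{lem.S-extension} then produces a unique $L = fdx \in \cali{D}(K, \op)$ extending $l$. The only substantive point is the norm estimate $|dx(h)|_p \le \sup |h|_p$; once that is in hand, both the convergence defining $l$ and condition \eqref{condition.star} are routine, and the conclusion follows immediately from Lemma \ref{lem.S-extension}.
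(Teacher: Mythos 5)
Your proposal is correct and takes essentially the same route as the paper: the paper's proof is a one-line appeal to Lemma \ref{lem.S-extension} together with the observation that $f$, being dissipative, is compactly supported modulo each $p^n$, and your write-up supplies exactly the implicit details (the ultrametric bound $|dx(h)|_p \le \sup|h|_p$ on $\cali{S}$, the limiting definition of $l$, and the verification of \eqref{condition.star}). You are also right to read \eqref{condition.star} as a mod-$p^n$ vanishing condition rather than literal vanishing in $\op$ --- the literal reading would be too strong for this application, and the proof of Lemma \ref{lem.S-extension} indeed only uses the reduction modulo $p^n$ to build $l_n$; this is a small but genuine imprecision in the paper's phrasing of \eqref{condition.star} that you handled correctly.
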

	
	\begin{de}
		For any generative $\zeta\in T_\ell(\G_m)$, we define the \textit{Fourier transform} of $f$ with respect to $\zeta$ by $(fdx)^\wedge_{\zeta}$. In the rest of this paper, we shall fix such a $\zeta$ and simply write $\hat{f}$ for $(fdx)^\wedge_{\zeta}$.
	\end{de}
	
	\textit{Remark.} Note that our formal definition of the Fourier transform coincides with the contrived one (denoted by $\cali{F}$) used in the appendix. Indeed, the only ingredient for that definition is a field embedding $\iota_p: \Q(\mu_{\ell^\infty}) \hookrightarrow \C_p^\times$, which is the same as choosing a generative $\zeta = \{\zeta_n\}_{n\ge 0} \in T_\ell(\G_m)$ through the correspondence
	\[\iota_p(e^{-2\pi i/\ell^n}) = \zeta_n.\]
	
	In turn, it is easy to verify that $\iota_p(e^{-2\pi it}) = \zeta^{t}$ for all $t \in \Q_\ell$, and hence by Corollary \ref{cor.basic-functions} we have $\cali{F}(\1_{a+\ell^N\Z_\ell}) = (\1_{a+\ell^N\Z_\ell})^\wedge_\zeta$ for all $a\in \Q_\ell$ and $N\in \Z$. This is enough to conclude the coincidence by approximation.
	
	\begin{thm}[Fourier inversion]
		\label{thm.fourier-inversion}
		The Fourier transform of a dissipative $f$ is still dissipative. Moreover,
		\begin{align*}
			((fdx)^\wedge_\zeta dx)^\wedge_{\zeta^{-1}} = f.
		\end{align*}
	\end{thm}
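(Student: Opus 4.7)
I will prove both assertions simultaneously by first establishing a $p$-adic \emph{continuity} estimate for the Fourier transform, then verifying the inversion formula on the basic characteristic functions $\1_{a+\got{l}^N}$, and finally extending to all dissipative functions by approximation.

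The key estimate is: if $g \in \cali{C}(K, \op)$ is dissipative with $|g(x)|_p \le p^{-m}$ for all $x \in K$, then $|\hat{g}(y)|_p \le p^{-m}$ for all $y \in K$. This follows by writing $g = p^m g_0$ with $g_0 \in \cali{C}(K, \op)$ still dissipative: Corollary \ref{cor.haar-dissipative-matching} gives $g_0\, dx \in \cali{D}(K, \op)$, whence $\hat{g}_0 \in \cali{C}(K, \op)$ by Corollary \ref{cor.fourier1}, and $\hat{g} = p^m \hat{g}_0$. From this, dissipativity of $\hat{f}$ for arbitrary dissipative $f$ follows by approximation: picking $f_m \in \cali{S}$ with $\sup_x |f(x) - f_m(x)|_p \le p^{-m}$, the function $\hat{f}_m$ is compactly supported as a finite $\op$-linear combination of the Fourier transforms of Corollary \ref{cor.basic-functions}, while $|\hat{f} - \hat{f}_m|_p \le p^{-m}$ everywhere; thus outside the compact support of $\hat{f}_m$, the function $\hat{f}$ is $\equiv 0 \bmod p^m$, and this holds for every $m$.

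For the inversion formula, once $\hat{\hat{f}} = f$ is known for $f = \1_{a+\got{l}^N}$, it extends by linearity to $f \in \cali{S}$ and then, by a double application of the continuity estimate, to all dissipative $f$: $|f - f_m|_p \le p^{-m}$ yields $|\hat{f} - \hat{f}_m|_p \le p^{-m}$, and applying continuity once more (using that $\hat{f} - \hat{f}_m$ is dissipative by the first part) gives $|\hat{\hat{f}} - f_m|_p \le p^{-m}$, whence $|\hat{\hat{f}} - f|_p \le p^{-m}$ for every $m$. It remains to treat the basic case, which is the heart of the matter: by Corollary \ref{cor.basic-functions}, $((\hat{\1}_{a+\got{l}^N})dx)^\wedge_{\zeta^{-1}}(x) = q^{-N-\delta/2}\int_{\got{l}^{-N}\got{d}^{-1}} \zeta^{\tr((a-x)y)}\, dy$. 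If $a - x \in \got{l}^N$, then $(a-x)\got{l}^{-N}\got{d}^{-1} \subset \got{d}^{-1}$, so the integrand is identically $1$ and the integral returns the volume $q^{N+\delta/2}$, canceling the prefactor; if $a - x \notin \got{l}^N$, then the integrand is a nontrivial character on the finite quotient $\got{l}^{-N}\got{d}^{-1}/\got{l}^{-\ord_\pi(a-x)}\got{d}^{-1}$, with nontriviality following precisely from the perfectness of Lemma \ref{lem.pairing-perfect}, and a standard orthogonality argument (valid since $1 - \omega$ is a unit in $\op$ for any nontrivial $\ell$-power root of unity $\omega$, as $\ell \ne p$) kills the sum. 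The main obstacle sits in this character-sum step, where the volume factor $q^{N+\delta/2}$ must align exactly with the prefactor $q^{-N-\delta/2}$ from the normalization of $dx$, and where the coprimality $\ell \ne p$ is essential for the orthogonality to make sense over $\op$.
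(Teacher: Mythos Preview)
Your proof is correct and follows essentially the same approach as the paper's: both establish the continuity estimate $\sup_y|\hat g(y)|_p\le \sup_x|g(x)|_p$ (the paper phrases it as $(p^ng)^\wedge\in p^n\cali{C}(K,\op)$), use it to deduce dissipativity of $\hat f$ by approximating $f$ by $f_m\in\cali{S}$, verify the inversion on $\cali{S}$, and extend by the same approximation. Your write-up is more explicit on the basic case $\1_{a+\got{l}^N}$, spelling out the character-sum orthogonality and the role of $\ell\ne p$, whereas the paper leaves this as ``directly verified''; conversely, the paper makes slightly more explicit why $g=p^{-m}(f-f_m)$ is itself dissipative (needed for the continuity estimate to apply), which you use but do not state.
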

	
	\textit{Remark.} A dissipative function is uniformly continuous, as a compactly supported function is. A \textit{tour de force} proof of this fact is to combine Theorem \ref{thm.fourier-inversion} and Corollary \ref{cor.fourier1}.
	
	\begin{proof}[Proof of Theorem \ref{thm.fourier-inversion}]
		First we want to show, modulo $p^n$ for any $n\ge 1$, $\hat{f}$ is compactly supported on $K$. Note that if $g$ is dissipative then $\hat{g}\in\cali{C}(K,\op)$ and $(p^n g)^\wedge = p^n\hat{g}\in p^n\cali{C}(K,\op)$. Now, if we take $f_n\in \cali{S}$ such that $\sup_{x\in K}|f(x) - f_n(x)|_p <p^{-n}$, then $f_n-f = p^n g$ where $g$ is continuous, and is in fact dissipative because $\lim_{x\to \infty} g(x) = p^{-n}\lim_{x\to \infty}[f(x)-f_n(x)] = 0$. This implies $\hat{f_n}-\hat{f}\in p^n\cali{C}(K,\op)$, so $\hat{f_n}\equiv \hat{f}\bmod p^n$. To complete our argument, it suffices to note that $f_n$ may be regarded as some function on $\grN{r}{N}$ for some $r,N$, and thus $\hat{f_n}$ is naturally a function on $\grNvee{N}{r}$ by Cartier duality, and thus compactly supported. As such, $\hat{f}$ is compactly supported modulo $p^n$.
		
		\vspace{3mm}
		
		The inversion identity can be directly verified for characteristic functions of open compact subsets, and hence it holds true for all elements of $\cali{S}$. Hence it holds true for dissipative function by \textit{approximation}: just like above, for any dissipative $f$, we take $f_n$ and we see $(fdx)^\wedge_\zeta = (f_ndx)^\wedge_\zeta + p^n(gdx)^\wedge_\zeta$ and thus $((fdx)^\wedge_\zeta dx)^\wedge_{\zeta^{-1}} 
		= f_n + p^n((gdx)^\wedge_\zeta dx)^\wedge_{\zeta^{-1}}$, whereby $((fdx)^\wedge_\zeta dx)^\wedge_{\zeta^{-1}} = \lim_n f_n = f$.
	\end{proof}

	\begin{prop}
		\label{prop.classic}
		Let $f,g$ be dissipative. We have the following facts:
		\begin{enumerate}
			\item[i)] If $g(x) = f(x-h)$, then $\hat{g}(y) = \hat{f}(y)\zeta^{\tr(hy)}$;
			
			\item[ii)] If $g(x) = f(\lambda x)$, then $\hat{g}(y)=\chihaar(\lambda)^{-1}\hat{f}(y/\lambda)$, where $\chihaar(r)dx = d(rx)$;
			
			\item[iii)]\emph{(Ultrametric inequality)}
			$\left|\int_{K}f(x)dx\right|_p\le \sup_{x\in K}|f(x)|_p$.
			
			\item[iv)]
			\emph{(Poisson summation formula)}
			For any $t\in K$:
			\begin{align*}
				\int_{\got{o}_K}f(x+t)dx = q^{-\delta/2}\int_{\got{d}^{-1}} \hat{f}(y)\zeta^{-\tr(ty)}dy.
			\end{align*}
		
			\item[v)] \emph{(Riemann-Lebesgue)} $\lim_{t \to \infty} \int_K f(x)\zeta^{\tr(tx)}dx = 0$. Furthermore, if $\int_K f(x) dx = 0$ then $\lim_{t\to  0} \int_K f(x)\zeta^{\tr(tx)}dx = 0$.
		\end{enumerate}
	\end{prop}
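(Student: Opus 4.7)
The plan is to handle each claim by the now-familiar two-step procedure: verify the identity first on the elementary characteristic functions $\1_{a+\got{l}^N}$, where Corollary \ref{cor.basic-functions} gives an explicit formula, extend by $\op$-linearity to all of $\cali{S}$, and then pass to an arbitrary dissipative $f$ via the same $p$-adic approximation used in the proof of Theorem \ref{thm.fourier-inversion}: choose $f_n\in\cali{S}$ with $\sup_K|f-f_n|_p<p^{-n}$, note that both sides of each identity depend $p$-adically continuously on $f$ (with $f\mapsto\hat{f}$ continuous by Corollaries \ref{cor.fourier1} and \ref{cor.haar-dissipative-matching}), and conclude by letting $n\to\infty$.

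On $\cali{S}$, (i) and (ii) are immediate: translating or scaling the variable in the defining finite sum $\hat f(y)=\int_K f(x)\zeta^{\tr(xy)}dx$, combined with translation-invariance of $dx$ and the scaling identity $d(\lambda x)=\chiharr(\lambda)dx$, produces the claimed factors at once. For (iii), decompose $f\in\cali{S}$ as a finite $\op$-linear combination $\sum_i\lambda_i\1_{E_i}$ with disjoint open compact $E_i$, so that $\sup_K|f|_p=\max_i|\lambda_i|_p$; since each $|dx(\1_{E_i})|_p=|q^{-N_i-\delta/2}|_p=1$ (using $\ell\ne p$, whence $|q|_p=1$), the ultrametric inequality in $\op$ yields $|\int_K f\,dx|_p\le\max_i|\lambda_i|_p$, and approximation extends the bound to dissipative $f$.

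For (iv), the shortest route is to substitute Fourier inversion $f(x+t)=\int_K\hat{f}(y)\zeta^{-\tr((x+t)y)}dy$ into the left-hand side, interchange the two integrations (legitimate on $\cali{S}$, where both integrals are finite sums), and recognize the inner integral $\int_{\got{o}_K}\zeta^{-\tr(xy)}dx$ as $(\1_{\got{o}_K}dx)^\wedge_{\zeta^{-1}}(-y)=q^{-\delta/2}\1_{\got{d}^{-1}}(y)$ via Corollary \ref{cor.basic-functions} applied to the generative system $\zeta^{-1}$; alternatively, one can verify (iv) directly on $\1_{a+\got{l}^N}$ using Corollary \ref{cor.basic-functions} and extend linearly. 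Either way, the usual approximation yields the general dissipative case. Finally, (v) is essentially automatic: the first limit just says that $\hat{f}$ is dissipative, which is Theorem \ref{thm.fourier-inversion}, while the second follows from $\hat{f}(0)=\int_K f\,dx=0$ together with the continuity of $\hat{f}\in\cali{C}(K,\op)$ at $0$.

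The only genuine subtlety is the Fubini-style interchange for (iv), but once the identity is checked on $\1_{a+\got{l}^N}$, where both sides reduce to finite exponential sums via Corollary \ref{cor.basic-functions}, the approximation machinery disposes of it without further analytic input.
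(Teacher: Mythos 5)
Your proof is correct and follows the paper's strategy exactly: verify i)--iv) on $\cali{S}$ and extend to dissipative $f$ by the same $p$-adic approximation used in Theorem \ref{thm.fourier-inversion}, then read off v) from the dissipativity and continuity of $\hat{f}$. You merely fill in the $\cali{S}$-level computations that the paper leaves implicit, including a clean route to (iv) via Fourier inversion and Corollary \ref{cor.basic-functions} applied to the generative system $\zeta^{-1}$.
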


	\begin{proof}
		Assertions i)-iv) are easily checked for $\cali{S}$, where the general case follows by approximation. We now explain v): Firstly note that both limits are limits of the value $\hat{f}(t)$. Next since $f$ is dissipative, by Theorem \ref{thm.fourier-inversion}, $\hat{f}$ is also dissipative. Therefore $\lim_{t \to \infty}\hat{f}(t)=0$, which is the first limit. For the second limit, we note $\hat{f}(0) = \int_{K} f(x)dx = 0$, and since $\hat{f}$ is continuous at 0, we deduce that $\lim_{t\to 0}\hat{f}(t)=0$.
	\end{proof}
	
	\begin{cor}
		\label{cor.uniform-convergence}
		Suppose a sequence of dissipative functions $(f_n)_{n\ge 0}$ converges to $f$ uniformly, i.e., $\lim_{n\to \infty}\sup_{x\in K}|f(x)-f_n(x)| = 0$. Then $\lim_{n\to \infty}\int_K f_n(x) dx = \int_{K} f(x)dx$.
	\end{cor}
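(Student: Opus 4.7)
The plan is to reduce the assertion to the ultrametric inequality (iii) of Proposition \ref{prop.classic} applied to the differences $f_n - f$. Before doing so, however, one must justify that the integral $\int_K f(x)\,dx$ is defined, i.e., that $f$ is itself dissipative (so that $fdx \in \mathcal{D}(K,\mathfrak{o}_p)$ by Corollary \ref{cor.haar-dissipative-matching}).

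First I would verify that $f$ is dissipative. Given $m \in \mathbf{Z}_{>0}$, by uniform convergence choose $N$ large enough that $\sup_{x\in K}|f(x) - f_N(x)|_p < p^{-m}$, so that $f \equiv f_N \bmod p^m$ as functions on $K$. Since $f_N$ is dissipative, it is compactly supported modulo $p^m$; the congruence then forces $f$ to be compactly supported modulo $p^m$ as well. As $m$ was arbitrary, $f$ is dissipative. In particular $f_n - f$ is dissipative for every $n$, since dissipative functions form a module over $\mathcal{C}(K,\mathfrak{o}_p)$ (and in particular are closed under subtraction).

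Next, by the linearity of the functional $dx$ (which is guaranteed by Corollary \ref{cor.haar-dissipative-matching} for dissipative inputs) and by the ultrametric inequality (iii) of Proposition \ref{prop.classic} applied to $f_n - f$, we have
\begin{align*}
\left|\int_K f_n(x)\,dx - \int_K f(x)\,dx\right|_p = \left|\int_K (f_n(x) - f(x))\,dx\right|_p \le \sup_{x\in K}|f_n(x) - f(x)|_p.
\end{align*}
The right-hand side tends to $0$ by the hypothesis of uniform convergence, giving the claimed limit.

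The only nontrivial step is the first paragraph, namely showing that the uniform limit of dissipative functions remains dissipative; once this is in hand, the rest is essentially an immediate consequence of the ultrametric inequality, so I do not anticipate any real obstacle.
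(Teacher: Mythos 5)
Your proof is correct and follows essentially the same route as the paper: apply the ultrametric inequality (Proposition \ref{prop.classic} iii) to $f - f_n$ and let $n\to\infty$. The only difference is that you explicitly verify that the uniform limit $f$ is dissipative before invoking Corollary \ref{cor.haar-dissipative-matching}, a point the paper's one-line proof leaves implicit; your argument for this (mod $p^m$, $f$ agrees with a compactly supported $f_N$ once $N$ is large enough) is exactly the right one, and it is a genuine improvement in completeness over what is printed.
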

	
	\begin{proof}
		By the ultrametric inequality from Proposition \ref{prop.classic}, we have
		\[\left|\int_K f(x)dx - \int_K f_n(x)dx\right|_p \le \sup_{x\in K}|f(x) - f_n(x)|_p,\]
		
		which tends to zero when $n$ tends to infinity by definition.
	\end{proof}

	\section{Application: local functional equation}
	
	In what follows, by a multiplicative character, or a character for short, we exclusively mean a continuous group homomorphism $\chi: K^\times \to \C_p^\times$. Clearly such a $\chi$ is uniquely determined by $\chi(\pi) \in \C_p^\times$ and its restriction $\chi|_{\got{o}_K^\times}$, which is of finite image. We shall call $|\chi(\pi)|_p \in \R_+$ the \textit{modulus} of $\chi$ and denote it by $\norm{\chi}$, which is our $p$-adic surrogate of the exponent of a complex character. By convention, when $\chi|_{\got{o}_K^\times}$ is trivial, the character $\chi$ is called \textit{unramified}, and we denote by $\chi_\lambda$ the unramified character with $\chi(\pi) = \lambda$. Following \cite{Ta50}, we define an equivalence relation between multiplicative characters, by announcing $\chi \sim \psi$ if $\chi\psi^{-1} = \chi_\lambda$ for some $\lambda \in \C_p^\times$. We note the following simple fact:
	
	\begin{prop}
		\label{prop.rigid-parametrization}
		An equivalence class $\got{C}$ of characters can be parametrized by the rigid analytic space $\C_p^\times$, which formally is the $\C_p$-points of the rigid analytification of $\G_m$ over $\C_p$. The parametrization that we will employ throughout, is to let any character $\chi\in \got{C}$ correspond to the point $\chi(\pi) \in \C_p^\times$, and conversely, let $\lambda \in \C_p^\times$ correspond to $\tilde{\chi}\chi_\lambda\in \got{C}$, where $\tilde{\chi}$ is the unique character in the class with $\tilde{\chi}(\pi) = 1$. 
	\end{prop}
	
	\begin{de}
		\label{definition.schwartz-class}
		Let $c$ be a positive real number. A dissipative function $f$ is said to be \textit{of Schwartz class} $c$, if for any multiplicative character $\chi$ of modulus $c$, $\lim_{x\to 0}f(x)\chi(x)$ exists and, with $f\chi(0)$ defined to be $\lim_{x\to 0}f(x)\chi(x)$, the resulting function $f\chi$ on $K$ is dissipative. 
	\end{de}
	
	\begin{prop}
		\label{prop.schwartz-criterion}
		Let $c\ge 1$ and $f$ be a dissipative function. If $f$ is of Schwartz class $c$, then for all multiplicative characters $\chi$ of modulus $c$, $\lim_{x\to 0}f(x)\chi(x)=0$. Conversely, if for some character $\chi$ of modulus $c$, $\lim_{x\to 0}f(x)\chi(x) = 0$, then $f$ is of Schwartz class $c$.
	\end{prop}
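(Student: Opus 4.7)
The plan is to treat the two directions separately, leveraging the observation that any two characters of the same modulus $c$ differ by an unramified twist $\chi_\alpha$ with $|\alpha|_p = 1$, so varying $\alpha$ probes the Schwartz class $c$ hypothesis from different angles.

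\textbf{Forward direction.} Suppose $f$ is of Schwartz class $c$; fix a character $\chi$ of modulus $c$ and set $L = \lim_{x \to 0} f(x)\chi(x)$. I would pick $\alpha \in \C_p^\times$ with $|\alpha|_p = 1$ such that $(\alpha^n)_{n \ge 0}$ fails to converge in $\C_p$ -- for example, any primitive $N$-th root of unity with $N > 1$ and $\gcd(N, p) = 1$ cycles through $N$ distinct values. The character $\chi\chi_\alpha$ still has modulus $c$, so the Schwartz class hypothesis also gives $\lim_{x \to 0} f(x)\chi(x)\chi_\alpha(x) \in \C_p$. Evaluating along $x_n = \pi^n$,
\[
f(\pi^n)\chi(\pi^n)\chi_\alpha(\pi^n) = L\alpha^n + \varepsilon_n\alpha^n, \qquad \varepsilon_n := f(\pi^n)\chi(\pi^n) - L \to 0;
\]
since $|\alpha^n|_p = 1$, the error $\varepsilon_n\alpha^n \to 0$, so convergence of the left-hand side forces $(L\alpha^n)_n$ to converge, which by the choice of $\alpha$ is only possible if $L = 0$.

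\textbf{Converse direction and main obstacle.} Now assume $\lim_{x\to 0} f(x)\chi(x) = 0$ for some single character $\chi$ of modulus $c$. Any other character $\psi$ of modulus $c$ takes the form $\chi\eta$ with $|\eta|_p \equiv 1$, whence $|f(x)\psi(x)|_p = |f(x)\chi(x)|_p \to 0$ as $x \to 0$. Setting $f\psi(0) = 0$, the remaining task is to verify that $f\psi$ is dissipative. Continuity on $K$ is clear. For decay at infinity, writing $x = \pi^k u$ with $u \in \okt$ yields $|f\psi(x)|_p = c^k|f(x)|_p$, which tends to zero as $k \to -\infty$: either $c > 1$ kills the first factor (with $f$ bounded), or $c = 1$ and $f$ is already dissipative. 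Given both limits $f\psi(x) \to 0$ at $0$ and at $\infty$, $f\psi$ extends continuously to the one-point compactification $K\cup\{\infty\}$, whence it is bounded and compactly supported modulo every $p^n$. The main obstacle is the forward direction: the Schwartz hypothesis alone guarantees existence of $L$ without any direct control on its size, and the twist trick is what bridges the gap by exploiting that Schwartz class $c$ is a constraint on \emph{all} characters of modulus $c$ simultaneously; the converse is then essentially bookkeeping about how moduli multiply.
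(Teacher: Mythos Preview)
Your proof is correct and follows essentially the same approach as the paper's. Both directions match: in the forward direction the paper likewise twists by an unramified $\chi_\lambda$ with $\lambda\in\got{o}_p^\times\setminus\{1\}$ and derives a contradiction from the non-convergence of $(\lambda^n)_{n\ge 0}$, and in the converse the paper also reduces to the observation that any two characters of modulus $c$ differ by a function of bounded (indeed unit) absolute value, then checks decay at infinity using $c\ge 1$. Your one-point compactification phrasing for the converse is a slightly cleaner packaging of the same estimate, but there is no substantive difference.
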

	
	\begin{proof}
		Suppose first $f$ is of Schwartz class $c$, and let $\chi$ be a multiplicative character of modulus $c$, and $\lambda\in \op^\times\setminus \{1\}$. By the definition of Schwartz class, we know that both $f\chi$ and $f\chi\chi_\lambda$ are dissipative, and both $\lim_{x\to 0}f(x)\chi(x)$ and $\lim_{x\to 0}f(x)\chi\chi_\lambda(x)$ exist. We assert that $\lim_{x\to 0}f(x)\chi(x)=0$, since otherwise the limit 
		\[
		\lim_{n\to \infty} \lambda^n
		= \lim_{n\to \infty} \chi_\lambda(\pi^n) 
		= \frac{\lim_{n\to \infty}f\chi\chi_\lambda(\pi^n)}{\lim_{n\to \infty}f\chi(\pi^n)}
		\]
		
		would exist, which is not possible as $\lambda \in \op^\times\setminus \{1\}$.
		
		\vspace{3mm}
		
		Conversely, suppose $\lim_{x\to 0}f(x)\chi(x)=0$ for some character $\chi$ of modulus $c$. Recall that a function $g$ is dissipative if and only if $g$ is continuous on $K$ and $\lim_{x\to \infty} g(x) = 0$. So, to show $f$ is of Schwartz class $c$, we will show that, for any character $\chi'$ of modulus $c$, the limit $\lim_{x\to 0}f\chi'(x)$ exists, and the resulting function $f\chi'$ on $K$ is continuous and $\lim_{x\to \infty}f\chi'(x)=0$. Note first that, when $\chi$ and $\chi'$ are of the same modulus, the function $\chi'\chi^{-1}$ on $K^\times$ is bounded, and thus $\lim_{x\to 0}f(x)\chi'(x) = \lim_{x\to 0}[f\chi(x)\chi'\chi^{-1}(x)] = 0$. It is then clear that $f\chi'$ is continuous on $K$, since on $K^\times$ it coincides with the continuous function $f(x)\chi'(x)$, and at $0$, we defined $f\chi'(0) = \lim_{x\to 0}f\chi'(x)$. Finally to show $\lim_{x\to \infty}f\chi'(x) = 0$, we first note that on $K\setminus \got{o}_K$, $\chi'$ is a bounded function since its modulus is $c\ge 1$. Therefore, as $f$ is dissipative, we have  $\lim_{x\to \infty}f(x)=0$, and subsequently $\lim_{x\to \infty}f(x)\chi'(x)=0$.
	\end{proof}
	
	\begin{cor}
		\label{cor.schwartz-filtration}
		Let $c\ge 1$, $f$ be a dissipative function, and $\chi$ be a multiplicative character of modulus $c$ such that $f\chi$ is dissipative. Then for all $c'\in [1,c)$, $f$ is of Schwartz class $c'$. Furthermore, if there is another character $\chi'\ne \chi$ of modulus $c$ such that $f\chi'$ is dissipative, then $f$ is also of Schwartz class $c$.
	\end{cor}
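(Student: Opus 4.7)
My plan is to reduce both parts to Proposition~\ref{prop.schwartz-criterion}, which lets us conclude that $f$ is of Schwartz class $c''$ once we exhibit a single character $\chi''$ of modulus $c''$ with $\lim_{x\to 0}f(x)\chi''(x)=0$.

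For the first assertion, I will fix an arbitrary character $\chi''$ of modulus $c'\in [1,c)$ and decompose $f\chi'' = (f\chi)\cdot (\chi''\chi^{-1})$. Since $f\chi$ is dissipative it is $\op$-valued, hence bounded by $1$ in $|\cdot|_p$. On the other hand, any multiplicative character restricted to $\got{o}_K^\times$ has finite image in $\C_p^\times$ and so lands in roots of unity of absolute value $1$; consequently $|\chi''\chi^{-1}(u\pi^n)|_p = (c'/c)^n \to 0$ as $u\pi^n\to 0$. The ultrametric estimate then gives $\lim_{x\to 0}f(x)\chi''(x) = 0$, and Proposition~\ref{prop.schwartz-criterion} finishes the first assertion.

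For the second assertion, since $\chi$ itself is a character of modulus $c$, Proposition~\ref{prop.schwartz-criterion} reduces the task to showing $f\chi(0) = \lim_{x\to 0}f(x)\chi(x) = 0$, which I will prove by contradiction. Setting $\mu = \chi(\pi)$, $\mu' = \chi'(\pi)$ and $\lambda = \mu'/\mu \in \op^\times$ (so $|\lambda|_p = 1$), the continuity at $0$ of the dissipative functions $f\chi$ and $f\chi'$ combined with the expansions $f\chi(u\pi^n) = f(u\pi^n)\chi(u)\mu^n$ and $f\chi'(u\pi^n) = f(u\pi^n)\chi'(u)\mu^n\lambda^n$ yields, for each $u\in \got{o}_K^\times$ and as $n\to\infty$,
\[
f(u\pi^n)\chi(u)\mu^n \to f\chi(0), \qquad f(u\pi^n)\chi'(u)\mu^n\lambda^n \to f\chi'(0).
\]
Assuming $f\chi(0)\ne 0$, the first sequence is eventually nonzero, and dividing gives $\chi'\chi^{-1}(u)\cdot \lambda^n \to f\chi'(0)/f\chi(0)$.

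The key dichotomy is then the following: if $\lambda\ne 1$, the choice $u=1$ forces $\lambda^n$ to converge in $\C_p$, which is impossible because $|\lambda^{n+1}-\lambda^n|_p = |1-\lambda|_p > 0$ for every $n$; and if $\lambda = 1$, then $\chi'\chi^{-1}(u)$ equals the constant $f\chi'(0)/f\chi(0)$ for every $u\in \got{o}_K^\times$, so taking $u=1$ pins this constant to $1$, forcing $\chi'|_{\got{o}_K^\times} = \chi|_{\got{o}_K^\times}$ and hence (with $\mu=\mu'$) $\chi'=\chi$, contrary to hypothesis. Either way we obtain a contradiction, so $f\chi(0)=0$ and we are done. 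I expect the main subtlety to be this last case where $\chi$ and $\chi'$ agree on $\pi$ but differ on the units: the proof of Proposition~\ref{prop.schwartz-criterion} only compared characters differing by an unramified twist, and exploiting the freedom to vary $u$ along sequences $u\pi^n\to 0$ is precisely what extends the argument to the general case.
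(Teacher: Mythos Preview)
Your proof is correct and follows essentially the same route as the paper's: both parts are reduced to Proposition~\ref{prop.schwartz-criterion}, the first via the factorization $f\chi'' = (f\chi)(\chi''\chi^{-1})$ together with $(c'/c)^n\to 0$, and the second via a contradiction argument that splits according to whether $\chi'(\pi)=\chi(\pi)$. Your handling of the case $\lambda=1$ is in fact slightly slicker than the paper's (you observe directly that $\chi'\chi^{-1}(u)$ is a constant independent of $u$, then set $u=1$; the paper instead compares two particular choices of $u$), but the underlying idea is identical.
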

	
	\begin{proof}
		To show that $f$ is of Schwartz class $c'$ with $1\le c'<c$, by Proposition \ref{prop.schwartz-criterion}, it suffices to show that, for all characters $\chi'$ of modulus $c'$, $\lim_{x\to 0}f\chi'(x) = 0$. First note that since $f\chi$ is dissipative, $\lim_{x\to 0}f\chi(x)$ exists. Next, as $c'<c$ we see that for $x\in \pi^n\got{o}_K^\times$, $|\chi'\chi^{-1}(x)|_p=(c'/c)^n$, and thus $\lim_{x\to 0}\chi'\chi^{-1}(x) = 0$. Combining these, we conclude that
		\[\lim_{x\to 0}f\chi'(x) = \lim_{x\to 0}f\chi(x)\cdot \lim_{x\to 0}\chi'\chi^{-1}(x) = 0.\]
		
		Now suppose $\chi'\ne \chi$ is also of modulus $c$ and $f\chi'$ is dissipative. Again by Proposition \ref{prop.schwartz-criterion}, we want to show $\lim_{x\to 0}f\chi(x)=0$, so as to conclude that $f$ is of Schwartz class $c$. When $\chi'(\pi) \ne \chi(\pi)$, assuming by contradiction that $\lim_{x\to 0}f\chi(x)\ne 0$, then the same argument as in the proof of Proposition \ref{prop.schwartz-criterion} shows that $\lim_{n\to \infty}\chi'\chi^{-1}(\pi^n)$ exists, which is not possible since $\chi'\chi^{-1}(\pi)\in \op^\times\setminus\{1\}$. When $\chi'(\pi) = \chi(\pi)$, since $\chi'\ne \chi$, we must have some $t\in \got{o}_K^\times$ such that $\chi'(t) \ne \chi(t)$. As such, if $\lim_{x\to 0}f(x)\chi(x)\ne 0$, then for any $u\in \got{o}_K^\times$:
		
		\[
			\frac{\lim_{x\to 0}f\chi'(x)}{\lim_{x\to 0}f\chi(x)} 
			= \lim_{n\to \infty} \frac{f\chi'(\pi^n u)}{f\chi(\pi^n u)} = \lim_{n\to \infty}\chi'\chi^{-1}(u) = \chi'\chi^{-1}(u).
		\]
		
		However, this cannot be consistent since, by taking $u=t$ and $u = 1+\pi^n$ where $n$ is big enough such that $\chi(1+\pi^n) =\chi'(1+\pi^n)=1$, we get the contradiction $\chi'\chi^{-1}(t) =1$. 
	\end{proof}
	
	\textit{Remark.} In particular, for $1\le c'\le c$, a dissipative function $f$ of Schwartz class $c$ is automatically of Schwartz class $c'$.

	\begin{cor}
		\label{cor.integral expansion}
		Let $\chi$ be of modulus $c\ge 1$, and $f$ be of Schwartz class $c$. Then we have
		\[
		\int_K f\chi(x) dx 
		= \lim_{n\to \infty}\int_{\pi^{-n}\got{o}_K \setminus \pi^n\got{o}_K}f(x)\chi(x)dx
		= \sum_{n\in \Z}\int_{\pi^n\got{o}_K^\times}f(x)\chi(x)dx.
		\]
	\end{cor}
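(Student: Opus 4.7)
The plan is to approximate $f\chi$ uniformly by its restrictions to finite annuli, invoke Corollary \ref{cor.uniform-convergence} for the first equality, and then partition each annulus into unit shells for the second.

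First I extract the two analytic properties of $f\chi$. Since $c\ge 1$, Proposition \ref{prop.schwartz-criterion} gives $\lim_{x\to 0}f(x)\chi(x)=0$, so $f\chi$ is continuous on all of $K$ with $f\chi(0)=0$. By the definition of Schwartz class, $f\chi$ is dissipative, so $\lim_{x\to\infty}f\chi(x)=0$ as well. Thus $f\chi$ is a continuous function on $K$ vanishing at both $0$ and $\infty$.

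For the first equality, set $g_n = f\chi\cdot\1_{\pi^{-n}\got{o}_K\setminus \pi^n\got{o}_K}$. Since the annulus is clopen and compact, every $g_n$ is continuous and compactly supported, hence dissipative. On $\pi^n\got{o}_K\cup(K\setminus \pi^{-n}\got{o}_K)$ the difference $f\chi-g_n$ equals $f\chi$, and elsewhere it vanishes; consequently
\[
\sup_{x\in K}|f\chi(x)-g_n(x)|_p = \max\Bigl(\sup_{x\in \pi^n\got{o}_K}|f\chi(x)|_p,\ \sup_{x\notin \pi^{-n}\got{o}_K}|f\chi(x)|_p\Bigr).
\]
As $n\to\infty$ the first supremum tends to $0$ by continuity of $f\chi$ at $0$ together with $f\chi(0)=0$ (the neighborhoods $\pi^n\got{o}_K$ shrink to $\{0\}$), and the second tends to $0$ because $f\chi$ vanishes at infinity. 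So $g_n\to f\chi$ uniformly on $K$, and Corollary \ref{cor.uniform-convergence} gives $\int_K g_n\,dx\to \int_K f\chi\,dx$, which is the first equality.

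For the second equality, I partition $\pi^{-n}\got{o}_K\setminus \pi^n\got{o}_K = \bigsqcup_{k=-n}^{n-1}\pi^k\got{o}_K^\times$, so additivity of integration on $\cali{S}$ (extended to dissipative functions by the same approximation used above) yields $\int_K g_n\,dx=\sum_{k=-n}^{n-1}\int_{\pi^k\got{o}_K^\times} f\chi\,dx$, giving convergence of symmetric partial sums. To upgrade this to an unordered series over $\Z$, I bound each shell integral via the ultrametric inequality (Proposition \ref{prop.classic}(iii)) by $\sup_{\pi^k\got{o}_K^\times}|f\chi|_p$; this tends to $0$ as $|k|\to\infty$ for the same two reasons as before, since $k\to+\infty$ forces $x\to 0$ while $k\to-\infty$ forces $x\to\infty$. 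Hence the series converges (unconditionally, in the non-archimedean sense) to the common value. No step presents any real obstacle; the only conceptual point is that the hypothesis ``Schwartz class $c$ with $c\ge 1$'' is engineered precisely to place $f\chi$ into the class of continuous functions on $K$ vanishing at $0$ and at $\infty$, on which uniform approximation by compactly supported functions behaves well.
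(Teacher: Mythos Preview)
Your proof is correct and follows essentially the same approach as the paper's: invoke Proposition~\ref{prop.schwartz-criterion} to get $f\chi(0)=0$, observe that the truncations $\1_{\pi^{-n}\got{o}_K\setminus\pi^n\got{o}_K}\cdot f\chi$ converge uniformly to $f\chi$, and apply Corollary~\ref{cor.uniform-convergence}. The paper is terser on the second equality (it simply says it ``follows from the first''), whereas you spell out the shell decomposition and the non-archimedean convergence of the bi-infinite series via Proposition~\ref{prop.classic}(iii); this added detail is correct and welcome.
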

	
	\begin{proof}
		By Proposition \ref{prop.schwartz-criterion} we have $f\chi(0) = 0$, and thus the compactly supported function $\1_{\pi^{-n}\got{o}_K\setminus\pi^n\got{o}_K}(x)f(x)\chi(x)$ converges to $f\chi(x)$ uniformly when $n$ tends to infinity. By Corollary \ref{cor.uniform-convergence} this implies $\int_{\pi^{-n}\got{o}_K \setminus \pi^n\got{o}_K}f\chi(x)dx\to \int_{K}f\chi(x)dx\ (n\to\infty)$. The second equalily then follows from the first.
	\end{proof}
	
	\begin{de}
		Suppose for a dissipative function $f$, there exists some $c\ge 1$ such that $f$ is of Schwartz class $c$. We define the (local) \textit{zeta integral} attached to $f$ to be:
		\[
		Z(f,\chi) = \int_{K}f(x)\chi(x) dx,
		\]
		
		where $\chi$ ranges over multiplicative characters of modulus in $[1,c]$.
	\end{de}
	
	We are now ready to establish an analogue to Lemma 2.4.1 from \cite{Ta50}. 
	
	\begin{lem}
		\label{lem.analyticity}
		Let $f$ be of Schwartz class $c\ge 1$. For any equivalence class $\got{C}$ of multiplicative characters with the rigid analytic parametrization by $\C_p^\times$ as in Proposition \ref{prop.rigid-parametrization}, the zeta integral $Z(f,\chi)$ is a rigid analytic function on the Laurent domain $\{\chi \in \got{C}: 1\le \norm{\chi}\le c\} \subset \got{C}\simeq \C_p^\times$.
	\end{lem}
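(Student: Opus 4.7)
The plan is to realise $Z(f,\tilde{\chi}\chi_\lambda)$ as a convergent Laurent series in $\lambda$ whose coefficients depend only on $f$ and $\tilde{\chi}$, and then check the two standard tail conditions that make such a series a rigid analytic function on the closed annulus $\{1\le|\lambda|_p\le c\}$. For the expansion, I would first invoke Corollary \ref{cor.integral expansion}, which applies because by the Remark after Corollary \ref{cor.schwartz-filtration} the function $f$ is of Schwartz class $|\lambda|_p$ for every $|\lambda|_p\in[1,c]$, to write
\[
Z(f,\tilde{\chi}\chi_\lambda) = \sum_{n\in\Z}\int_{\pi^n\got{o}_K^\times} f(x)\tilde{\chi}(x)\chi_\lambda(x)\,dx.
\]
Since $\chi_\lambda$ is unramified and equals the constant $\lambda^n$ on $\pi^n\got{o}_K^\times$, I can pull it out of each integral; setting $a_n := \int_{\pi^n\got{o}_K^\times}f(x)\tilde{\chi}(x)\,dx$, a quantity depending only on $f$ and $\tilde{\chi}$, the zeta integral takes the form $\sum_{n\in\Z}a_n\lambda^n$.

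Next I would verify the two tail estimates $|a_n|_p\to 0$ as $n\to-\infty$ and $|a_n|_p c^n\to 0$ as $n\to+\infty$, which are exactly the convergence conditions for a Laurent series on the Laurent domain $\{1\le|\lambda|_p\le c\}$. Because $\tilde{\chi}$ is continuous on the compact group $\got{o}_K^\times$ and trivial on $\pi^{\Z}$, its image is a compact subgroup of $\C_p^\times$, hence contained in $\op^\times$ (the kernel of $|\cdot|_p$), so $|\tilde{\chi}(x)|_p=1$ identically on $K^\times$. The ultrametric inequality iii) of Proposition \ref{prop.classic} then gives
\[
|a_n|_p \le \sup_{x\in\pi^n\got{o}_K^\times}|f(x)|_p,
\]
and the dissipativity of $f$ forces this supremum to tend to $0$ as $n\to-\infty$, since $|x|_K=q^{-n}\to\infty$ on the annulus. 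For the opposite tail I would fix any $\lambda_0\in\C_p^\times$ with $|\lambda_0|_p=c$; the Schwartz-class $c$ hypothesis ensures $f\tilde{\chi}\chi_{\lambda_0}$ is dissipative, and Proposition \ref{prop.schwartz-criterion} further forces $\lim_{x\to 0}f\tilde{\chi}\chi_{\lambda_0}(x)=0$. Since this function has absolute value $|f(x)|_p c^n$ on $\pi^n\got{o}_K^\times$, the vanishing at $0$ translates to $\sup_{x\in\pi^n\got{o}_K^\times}|f(x)|_p\cdot c^n\to 0$ as $n\to+\infty$, and the ultrametric inequality again upgrades this to $|a_n|_p c^n\to 0$.

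Taken together, these two decay estimates are precisely the convergence conditions that make $\sum_n a_n\lambda^n$ a rigid analytic function on the closed annulus $\{1\le|\lambda|_p\le c\}$, completing the argument. I expect the main subtlety to lie in the second tail estimate: the exponential factor $c^n$ needed for convergence at the outer radius does not come from $f$ alone, but must be supplied by absorbing an unramified character of modulus $c$ into $f$ via the Schwartz-class hypothesis, which is exactly the role that Proposition \ref{prop.schwartz-criterion} and the definition of Schwartz class were designed to play. Everything else reduces to a routine combination of the ultrametric inequality and the decomposition $K^\times=\bigsqcup_{n\in\Z}\pi^n\got{o}_K^\times$.
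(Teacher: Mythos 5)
Your proof is correct and follows essentially the same route as the paper: expand $Z(f,\tilde\chi\chi_\lambda)$ over the annuli $\pi^n\got{o}_K^\times$ via Corollary \ref{cor.integral expansion}, then establish the two tail estimates from the ultrametric inequality together with (i) dissipativity for $n\to-\infty$ and (ii) the Schwartz-class-$c$ condition (absorbing a modulus-$c$ unramified character) for $n\to+\infty$. The paper packages the same estimates as membership of the resulting Laurent series in the affinoid algebra $\C_p\chx{X/\xi,Y}/(XY-1)$, but the content is identical.
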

	
	\begin{proof}
		Recall we denote by $\tilde{\chi}$ the unique character in the equivalence class such that $\tilde{\chi}(\pi) = 1$. By Corollary \ref{cor.schwartz-filtration}, we know the function $Z(\lambda)=Z(f,\tilde{\chi}\chi_\lambda)$ is defined for $1\le |\lambda|_p\le c$. It then suffices to establish the Laurent expansion of $Z(\lambda)$. Choose some $\xi \in \C_p^\times$ such that $|\xi|_p = c$. For any $\lambda$ such that $1\le |\lambda|_p \le c = |\xi|_p$, we have by Corollary \ref{cor.integral expansion}
		\begin{align*}
			\int_{K} f\tilde{\chi}\chi_\lambda(x)dx
			& = \sum_{n\in \Z} \int_{\pi^n\got{o}_K^\times} f\tilde{\chi}\chi_\lambda(x)dx\\
			& = \sum_{n\ge 0} \left[\int_{\pi^n\got{o}_K^\times}f\tilde{\chi}\chi_\xi(x)dx\right]
			\left(\frac{\lambda}{\xi}\right)^n
			+ \sum_{n>0}\left[\int_{\pi^{-n}\got{o}_K^\times} f\tilde{\chi}(x)dx\right]\lambda^{-n}\\
			& = \sum_{n\ge 0} a_n \left(\frac{\lambda}{\xi}\right)^n + \sum_{n>0}a_{-n} \lambda^{-n}.
		\end{align*}
		
		We note that the above expansion is legitimate, because the coefficient $a_n$ tends to zero when $|n|$ tends to infinity: this can be seen by the ultrametric inequality
		\[
			\left|\int_{\pi^n\got{o}_K^\times}f\chi(x)dx\right|_p \le \sup_{x\in \pi^n\got{o}_K^\times}|f\chi(x)|_p,
		\]
		
		for $\chi\in \{\tilde{\chi},\tilde{\chi}\chi_\xi\}$, and the vanishing of $f\tilde{\chi}$ at infinity and $f\tilde{\chi}\chi_\xi$ at zero. In particular the above expansion gives an element in the affinoid algebra $\C_p\chx{X/\xi,Y}/(XY-1)$, which corresponds to the Laurent domain $\{\lambda\in \C_p^\times : 1\le |\lambda|_p\le |\xi|_p = c\}$. 
	\end{proof}
	
	\bk
	
	We can now start to discuss the $p$-adic local functional equation of zeta integrals. For a multiplicative character $\chi$, we write $\chi^*=(\chi\chihaar)^{-1}$, where $\chihaar = \chi_{q^{-1}}$ is the unramified character such that $d(ax) = \chihaar(a)dx$ for all $a\in K^\times$.
	
	\begin{lem}
		\label{lem.local-fe}
		For all multiplicative characters $\chi$ of modulus $1$, and dissipative functions $f,g$ such that all of $f,g,\hat{f},\hat{g}$ are of Schwartz class $1$, we have:
		\begin{align}
			\label{equation.common-zeta-ratio}
			Z(f,\chi)Z(\hat{g},\chi^*)=Z(\hat{f},\chi^*)Z(g,\chi).
		\end{align}
	\end{lem}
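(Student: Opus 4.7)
The plan is to show that both sides of \eqref{equation.common-zeta-ratio} coincide with a common, manifestly $(f,g)$-symmetric triple integral. The slogan is that all the asymmetry between $f$ and $g$ resides in the auxiliary variable $w=y/x$, whose weight $\chi^*(w)$ is unaffected by swapping $f$ and $g$.

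Starting from $Z(f,\chi)Z(\hat g,\chi^*) = \iint_{K\times K} f(x)\hat g(y)\chi(x)\chi^*(y)\,dx\,dy$, I would expand $\hat g(y) = \int_K g(z)\zeta^{\tr(yz)}\,dz$ and apply Fubini to rewrite this as
\[
Z(f,\chi)Z(\hat g,\chi^*) = \iiint_{K^3} f(x)g(z)\chi(x)\chi^*(y)\zeta^{\tr(yz)}\,dx\,dy\,dz.
\]
By Proposition \ref{prop.schwartz-criterion} (applied at modulus $c=1$) the products $f\chi$ and $\hat g\chi^*$ vanish at $0$, so the outer $x$- and $y$-integrations may be restricted to $K^\times$. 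Then I would substitute $w = y/x$, which contributes a Jacobian $dy = \chiharr(x)\,dw$ and converts $\chi^*(y)$ into $\chi^*(x)\chi^*(w)$. The defining identity $\chi\chi^*\chiharr = 1$ (which is just $\chi^* = (\chi\chiharr)^{-1}$ rearranged) annihilates every $x$-factor outside $f(x)$, leaving
\[
Z(f,\chi)Z(\hat g,\chi^*) = \iiint_{K^3} f(x)g(z)\chi^*(w)\zeta^{\tr(xwz)}\,dx\,dw\,dz.
\]
This integrand is invariant under the simultaneous swap $(f,x)\leftrightarrow(g,z)$, so running the exact same calculation with $f$ and $g$ interchanged yields the same triple integral for $Z(g,\chi)Z(\hat f,\chi^*)$ (relabeling the dummies $x\leftrightarrow z$), which gives the desired equality.

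The main obstacle is the rigorous justification of Fubini and of the multiplicative change of variable $w=y/x$, since $\chi^*$ is not integrable on $K^\times$ and the iterated integral is not absolutely convergent in any naive sense. I would deal with this by invoking Corollary \ref{cor.integral expansion} to break every one-dimensional zeta integral into an absolutely convergent $\Z$-indexed sum over the compact annuli $\pi^n\got{o}_K^\times$; on each such compact piece all integrands are bounded and locally constant modulo $p^n$, so Fubini and the substitution $y = xw$ reduce to finite manipulations that are manifestly legal. The Schwartz-class-$1$ hypothesis on $f,g,\hat f,\hat g$ guarantees the requisite decay of the annular pieces both as $n\to +\infty$ (dissipativity) and as $n\to -\infty$ (vanishing at $0$ after twisting by any modulus-$1$ character), which lets the resulting double sum be rearranged at will and produces the triple integral above as a genuinely convergent quantity.
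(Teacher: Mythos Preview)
Your approach is correct and is essentially the same as the paper's (which follows Tate): both rewrite $Z(f,\chi)Z(\hat g,\chi^*)$, via the annular decomposition of Corollary~\ref{cor.integral expansion} and the multiplicative change of variable $y\mapsto xw$, as $\int_K \Psi(f,g)(w)\chi^*(w)\,dw$ with $\Psi(f,g)(w)=\iint_{K\times K} f(x)g(z)\zeta^{\tr(xwz)}\,dx\,dz$, and then use the manifest $(f,g)$-symmetry of $\Psi$. The only place the paper is more explicit than your sketch is in verifying that $\Psi(f,g)$ is itself dissipative (hence that the $w$-integral makes sense), which it does by reducing $f,g$ modulo $p^n$ to characteristic functions of cosets; you may want to flag that step.
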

	
	\begin{proof}(Tate) 
		We break down the proof in 4 steps:
		
		\vspace{3mm}
		
		\textit{Step 1.} For any function $f(x,y)$ on $K\times K$ that is continuous and compactly supported modulo $p^n$ for all $n\ge 1$, the 2-dimensional integral $\int_{K\times K}f(x,y)dxdy$ exists. In fact, as in the one-dimensional case, we can approximate $f(x,y)$ uniformly by linear combinations of characteristic functions of the form $L=\sum_{m,n}\lambda_{m,n}\1_{E_m\times F_n}$, where the sum is finite and $E_n$ and $F_m$ are open compact. We can then define the double integral to be 
		\[
		\int_{K\times K}f(x,y)dxdy =
		\lim_L \left[\mu_{\rm Haar}(L) = \sum_{m,n}\lambda_{m,n}\mu_{\rm Haar}(E_m)\mu_{\rm Haar}(F_n)\right],
		\] 
		
		where the limit is over all such approximations, and for a compact open subset $E\subset K$, $\mu_{\rm Haar}(E)=\int_E dx$. This limit is well-defined, since if $L,L'$ are two approximations of $f$ such that $\sup_{x\in K}|f(x)-L(x)|_p\le p^{-n}$ and $\sup_{x\in K}|f(x)-L'(x)|_p\le p^{-n}$, then by the ultrametric property we have $|\mu_{\rm Haar}(L)-\mu_{\rm Haar}(L')|_p \le \sup_{x\in K}|L(x)-L'(x)|\le p^{-n}$.
		
		\vspace{3mm}
		
		\textit{Step 2.} Let $f(x,y)$ be supported on $E\times K$, where $E$ is open compact of $K^\times$. We prove the change of variable formula
		\[\int_{K\times K}f(x,y)dxdy = \int_{K\times K}f(x,xy)\chihaar(x)dxdy.\] 
		
		By approximation it suffices to prove this for $f(x,y) = \1_{a+\pi^n\got{o}_K}(x)\1_{b+\pi^m\got{o}_K}(y)$; passing to a finer division if necessary, we may further assume that $a\ne 0$ and $n>\max\{\ord_\pi(a),\ord_\pi(a)+m-\ord_\pi(b)\}$. In this case, the integral on the left is evaluated to be $q^{-n-m-\delta}$. To deal with the integral on the right, put $r = \ord_\pi(a)$ and $a = \pi^{r}u$ with $u\in \got{o}_K^\times$. Given $x\in a+\pi^n\got{o}_K$, we may write $x = a + \pi^nz = \pi^r(u+\pi^{n-r}z)$ for some $z\in \got{o}_K$, and let $z'\in \got{o}_K$ be such that $u^{-1}+\pi^{n-r}z' = (u+\pi^{n-r}z)^{-1}$; this is possible since $n>r=\ord_\pi(a)$. As such, we have
		\begin{align*}
			\1_{b+\pi^m\got{o}_K}(xy) =
			\1_{b+\pi^m\got{o}_K}(\pi^r(u+\pi^{n-r}z)y) = \1_{(u^{-1}+\pi^{n-r}z')(\pi^{-r}b+\pi^{m-r}\got{o}_K)}(y) = \1_{a^{-1}b+\pi^{m-\ord_\pi(a)}\got{o}_K}(y).
		\end{align*}
		
		Using this, we are ready to compute the integral on the right:
		\[
			\int_{K\times K}\1_{a+\pi^n\got{o}_K}(x)\1_{b+\pi^m\got{o}_K}(xy)q^{-\ord_\pi(a)}dxdy = q^{-\ord_\pi(a)}\int_{K\times K}\1_{a+\pi^n\got{o}_K}(x) \1_{a^{-1}b+\pi^{m-\ord_\pi(a)}\got{o}_K}(y) dxdy = q^{-n-m-\delta}.
		\]
		
		So the equality is verified.
		
		\vspace{3mm}
		
		\textit{Step 3.} For dissipative functions $f,g$ such that $f,g,\hat{f},\hat{g}$ are of Schwartz class 1, we show that $\Psi(f,g)(y) = \int_{K}f(x)\hat{g}(xy)dx$ is of Schwartz class $1$, $\Psi(f,g)(y) = \Psi(g,f)(y)$, and for all characters $\chi$ of modulus $1$, $\Psi(f,g)(y)\chi^*(y)$ is uniformly approximated by $\left[\int_{\pi^{m}\got{o}_K\setminus\pi^n\got{o}_K}f(x)\hat{g}(xy)dx\right]\chi^*(y)$, for $m,n\in \Z$ and $m\to -\infty$ and $n\to \infty$. To prove the symmetry of $f$ and $g$, we write
		\[\Psi(f,g)(y) = \int_{K\times K} f(x)g(t)\zeta^{\tr(xty)}dtdx.\]
		
		Since we defined the 2-dimensional integral symmetrically, we have $dtdx = dxdt$, and thus $\Psi(f,g) = \Psi(g,f)$. Moreover, we can show modulo $p^n$, $\Psi(f,g)$ is compactly supported, by approximating the dissipative functions $f$ and $g$ and reduce it to the case $f= \1_{a+\pi^m\got{o}_K}$ and $g= \1_{b+\pi^n\got{o}_K}$, for $a,b\in K$ and $m,n\in \Z$. In this case we have
		\[
		\Psi(f,g)(y) = \zeta^{\tr(aby)}\int_{(x,t)\in \pi^m\got{o}_K\times \pi^n\got{o}_K}\zeta^{\tr(aty+bxy+xty)}dtdx,
		\] 
		
		which is zero when $\ord_\pi(y) < -\delta - \max\{\ord_\pi(a),m\} -\max\{\ord_\pi(b),n\}$, and is therefore compactly supported. As such, we conclude that $\Psi(f,g)$ is dissipative if $f$ and $g$ are. Consequently, as $\hat{f},\hat{g}$ are further assumed to be of Schwartz class $1$, or equivalently $\hat{f}(0) = \hat{g}(0) = 0$, using Proposition \ref{prop.schwartz-criterion} we find that $\Psi(f,g)$ is also of Schwartz class $1$, since $\Psi(f,g)(0) = \hat{g}(0)\int_{K}f(x)dx = 0$. Finally, to show the uniform approximation, we note that as $\chi$ (and thus $\chi^*$) is of modulus $1$, it suffices to show that
		\begin{align*}
			0=&\lim_{\substack{m\to-\infty\\n\to \infty}} 
			\sup_{y\in K}\left|\Psi(f,g)(y) - \int_{\pi^{m}\got{o}_K\setminus\pi^n\got{o}_K} f(x)\hat{g}(xy)dx\right|_p \\
			= &\lim_{\substack{m\to-\infty\\n\to \infty}} 
			\sup_{y\in K}\left|\int_{K\setminus \pi^{m}\got{o}_K} f(x)\hat{g}(xy)dx
			+ \int_{\pi^n\got{o}_K}f(x)\hat{g}(xy)dx
			\right|_p.
		\end{align*}
		
		This is then clear from the following ultrametric estimate for $E\in \{K\setminus \pi^m\got{o}_K, \pi^n\got{o}_K\}$ and the vanishing of $f$ at $0$ and $\infty$:
		\begin{align*}
			\left|\int_{E} f(x)\hat{g}(xy)dx \right|_p \le \sup_{x\in E}|f(x)|_p\cdot  \sup_{x\in K}|\hat{g}(x)|_p.
		\end{align*}
		
		\vspace{3mm}
		
		\textit{Step 4.} Using Corollary \ref{cor.integral expansion} and the facts established in previous steps, we may re-write the product on the left of \eqref{equation.common-zeta-ratio} by:
		\begin{align*}
			\int_K f\chi(x) dx \int_K \hat{g} \chi^*(y) dy 
			&= \sum_{m,n\in \Z}\int_{x\in \pi^m\got{o}_K^\times}\int_{y\in \pi^n\got{o}_K^\times} f(x)\hat{g}(y)\chi(x)\chi^*(y) dydx\\
			&= \sum_{m,n\in \Z}\int_{x\in \pi^m\got{o}_K^\times}\int_{y\in \pi^{n-m}\got{o}_K^\times} f(x)\hat{g}(xy)\chi^*(y) dydx\\
			&= \sum_{m,n\in \Z}
			\int_{y\in \pi^n\got{o}_K^\times}
			\int_{x\in \pi^m\got{o}_K^\times}
			f(x)\hat{g}(xy)\chi^*(y) dxdy\\
			&= \sum_{n\in \Z}
			\int_{y\in \pi^n\got{o}_K^\times}\left(\sum_{m\in\Z}
			\int_{x\in \pi^m\got{o}_K^\times}
			f(x)\hat{g}(xy) dx\right)\chi^*(y)dy\\
			&= \sum_{n\in \Z}\int_{y\in \pi^n\got{o}_K^\times}\left( \int_{K}f(x)\hat{g}(xy)dx\right)\chi^*(y)dy\\
			&= \int_{K}\Psi(f,g)(y)\chi^*(y)dy.
		\end{align*}
		
		As $\Psi(f,g)(y) = \Psi(g,f)(y)$, we conclude that $\int_K f\chi \int_K \hat{g}\chi^* = \int_K \hat{f}\chi^*\int_K g\chi$.
	\end{proof}
	
	\vspace{3mm}
	
	\textit{Remark.} According to Corollary \ref{cor.haar-dissipative-matching} and Theorem \ref{thm.fourier-inversion}, if $f,g$ are dissipative, then one can define the \textit{Cartier pairing}
	\begin{align*}
		\pair{f}{g} = \int_K f(x) \hat{g}(x)dx,
	\end{align*}
	
	which is exactly $\Psi(f,g)(1)$. More generally, if we let $\rho:K^\times\to \aut(\cali{C}(K,\got{o}_p))$ be the regular representation $[\rho(y)f](x) = f(x/y)$, then 
	\begin{align*}
		\Psi(f,g)(y) = \int_K f(x)\hat{g}(xy)dx = \chihaar^{-1}(y)\int_K f(x/y)\hat{g}(x)dx = \chi_{q}(y)\pair{\rho(y)f}{g}.
	\end{align*}
	
	As such, step 3 essentially shows that the Cartier pairing is symmetric and $K^\times$-invariant, and when either $\hat{f}$ or $\hat{g}$ is of Schwartz class 1, the matrix coefficient $\pair{\rho(y)f}{g}$ extends to a function on $K$, and is also of Schwartz class 1.
	
	\vspace{3mm}
	
	\textbf{Examples of Schwartz class functions, and their zeta integrals.}
	
	\vspace{3mm}
	
	In the following, when $I\subseteq \R_+$ is an interval, we say that a function $f$ is of Schwartz class $I$ if $f$ is of Schwartz class $c$ for all $c\in I$. Also, for a multiplicative character $\chi$, recall that we have the decomposition $\chi = \tilde{\chi}\chi_\lambda$ where $\tilde{\chi}$ is the unique character equivalent to $\chi$ with $\tilde{\chi}(\pi) = 1$. 
	\begin{enumerate}
		\item[(a)] We may generalize the computations performed in the appendix. For any $\alpha,\beta \in \C_p^\times$ with $|\alpha|_p<1$ and $|\beta|_p<1$, we define the dissipative functions
		\begin{itemize}
			\item $g_\alpha = \sum_{n\ge 0}\alpha^n \1_{\pi^n\okt}$;
			
			\item $g ^\beta = \sum_{n\ge 0}\beta^n \1_{\pi^{-n-1-\delta}\okt}$;
			
			\item $G_\alpha^\beta = g_\alpha - q^{-1-\delta}\frac{1-\beta q}{1-\alpha q^{-1}} g^\beta$.
		\end{itemize}
		
		Take the Fourier transform, and we get dissipative functions
		\begin{itemize}
			\item $\widehat{g_\alpha} = \frac{q^{-\delta/2}}{1-\alpha q^{-1}}\left[
			(1-\frac{1}{q})\1_{\got{d}^{-1}} -\frac{1-\alpha}{q} g^{\alpha/q}
			\right]$;
			
			\item $\widehat{g^\beta} = \frac{q^{\delta/2}}{1-\beta q}\left[
			(q-1)\1_{\got{o}_K} - q(1-\beta)g_{\beta q}
			\right]$;
			
			\item $\widehat{G_\alpha^\beta} = \frac{q^{-\delta/2}}{1-\alpha q^{-1}}(1-\frac{1}{q})\1_{\got{d}^{-1}\setminus \got{o}_K} 
			+ \frac{q^{-\delta/2}}{1-\alpha q^{-1}}(1-\beta) \left[g_{\beta q} - \frac{1}{q}\frac{1-\alpha}{1-\beta}g^{\alpha/q}\right]$.
		\end{itemize}
		
		We list their Schwartz classes:
		\begin{displaymath}
			\begin{array}{|*{7}{c|}}
				\hline \vrule width 0pt height 2.5ex
				& g_\alpha & g^\beta & G_\alpha^\beta & \widehat{g_\alpha} & \widehat{g^\beta} & \widehat{G_\alpha^\beta} \\
				\hline \vrule width 0pt height 2.5ex
				\text{Schwartz class} & (0,|\alpha|_p^{-1}) & (|\beta|_p,\infty) & (|\beta|_p,|\alpha|_p^{-1}) & (|\alpha|_p,1) & (0,1) & (|\alpha|_p, |\beta|_p^{-1})\\
				\hline
			\end{array}
		\end{displaymath}
		
		As such, we see that both $G_\alpha^\beta$ and $\widehat{G_\alpha^\beta}$ are of Schwartz class $(\max\{|\alpha|_p,|\beta|_p\}, \min\{|\alpha|_p^{-1},|\beta|_p^{-1}\})$. For simplicity, take $G[\alpha] = G_{\alpha q}^{\alpha/q}$, which is of Schwartz class $(|\alpha|_p,|\alpha|_p^{-1})$, and we can compute directly the zeta integrals $Z(G[\alpha],\tilde{\chi}\chi_{\lambda})$ and $Z(\widehat{G[\alpha]},\tilde{\chi}\chi_\lambda)$, where $\lambda \in \C_p^\times $ such that $|\lambda|_p\in (|\alpha|_p,|\alpha|_p^{-1})$:
		\begin{itemize}
			\item If $\tilde{\chi}$ is not the trivial character, both zeta integrals vanish, because $G[\alpha]$ and $\widehat{G[\alpha]}$ are $\got{o}_K^\times$-invariant (as functions on $K$).
			
			\item $Z(G[\alpha],\chi_\lambda) = q^{-\delta/2}(1-\frac{1}{q})\left[\frac{1}{1-\alpha\lambda} - \frac{\lambda^{-1-\delta}}{1-\alpha/\lambda}\right]$.
			
			\item $Z(\widehat{G[\alpha]},\chi_{\lambda}^* = \chi_{q\lambda^{-1}}) = (\lambda q^{-1})^{\delta}(1-\frac{1}{q})\frac{1-\lambda q^{-1}}{1-\lambda^{-1}}\left[\frac{1}{1-\alpha\lambda} - \frac{\lambda^{-1-\delta}}{1-\alpha/\lambda}\right]$.
		\end{itemize}
		
		In particular, we find that $Z(G[\alpha],\chi_\lambda) = q^{\delta/2}\lambda^{-\delta}\frac{1-\lambda^{-1}}{1-\lambda q^{-1}} Z(\widehat{G[\alpha]},\chi_\lambda^*)$, where the factor $q^{\delta/2}\lambda^{-\delta}\frac{1-\lambda^{-1}}{1-\lambda q^{-1}}$ is independent of $\alpha$, as guaranteed by Lemma \ref{lem.local-fe}.
		
		\item[(b)] It is possible to give simpler examples. To begin with, for any multiplicative character $\chi$, define the \textit{level} of $\chi$ to be $0$ if $\chi$ is unramified, and otherwise the smallest positive integer $n$ such that $\chi|_{1+\pi^n\got{o}_K}$ is trivial (when $n=1$ and $\got{o}_K^\times = 1+\pi \got{o}_K$, we say $\chi$ is both of level 0 and 1). Next consider the following functions:
		\begin{itemize}
			\item $h_0 = h_1 = (\1_{1+\pi\got{o}_K} - \frac{1}{q}\1_{\got{o}_K}) - (\1_{\pi+\pi^2\got{o}_K} - \frac{1}{q}\1_{\pi\got{o}_K}) = -\frac{1}{q}\1_{\okt} + \1_{1+\pi\got{o}_K} - \1_{\pi+\pi^2\got{o}_K}$;
			
			\item $h_n = \1_{1+\pi^n\got{o}_K} - \frac{1}{q}\1_{1+\pi^{n-1}\got{o}_K}$ for $n\ge 2$.
		\end{itemize}
		
		Again, we can compute their Fourier transforms:
		\begin{itemize}
			\item $\hat{h}_0(y) = \hat{h}_1(y) = q^{-\delta/2-1}\left[\zeta^{\tr(y)}\1_{\pi^{-\delta-1}\okt} - \frac{1}{q}\zeta^{\tr(\pi y)}\1_{\pi^{-\delta-2}\okt}\right]$;
			
			\item $\hat{h}_n(y) = q^{-\delta/2-n}\zeta^{\tr(y)}\1_{\pi^{-\delta-n}\okt}$ for $n\ge 2$.
		\end{itemize}
		
		We see that all of these functions are of Schwartz class $(0,\infty)$, since they are supported on open compact subsets of $K^\times$. Now, we record their zeta integrals for any multiplicative character $\chi = \tilde{\chi}\chi_\lambda$:
		\begin{displaymath}
			\begin{array}{|*{4}{c|}}
				\hline \vrule width 0pt height 2.5ex
				\text{level of }\chi & 0 & 1& m\ge 2\\
				\hline \vrule width 0pt height 2.5ex
				Z(h_n,\tilde{\chi}\chi_\lambda),\ n=0,1& q^{-\delta/2-2}(1-\lambda)& q^{\delta/2-1}(1-\lambda q^{-1})& 0\\ 
				\hline
				Z(h_n,\tilde{\chi}\chi_\lambda),\ n\ge 2 & 0& 0&
				\begin{array}{c}
					\vrule width 0pt height 2.5ex
					q^{-\delta/2-n} \text{ if }m = n,\\
					\text{otherwise }0
				\end{array}
				\\
				\hline \vrule width 0pt height 2.5ex Z(\hat{h}_n,\tilde{\chi}\chi_\lambda),\ n=0,1& -q^{-1}\lambda^{-\delta-1}(1-\lambda^{-1})& q^{-1}\lambda^{-\delta-1}(1-\lambda^{-1})\got{g}(\tilde{\chi})& 0\\
				\hline
				Z(\hat{h}_n,\tilde{\chi}\chi_\lambda),\ n\ge 2& 0& 0& 
				\begin{array}{c}
					\vrule width 0pt height 2.5ex
					q^{-n}\lambda^{-\delta-n}\got{g}(\tilde{\chi}) \text{ if }m=n,\\
					\text{otherwise }0
				\end{array}\\
				\hline
			\end{array}
		\end{displaymath}
		
		where for $\chi$ of level $n\ge 1$, $\got{g}(\chi) = q^{\delta/2+n}\int_{\okt} \zeta^{\tr(y/\pi^{\delta+n})}\chi(y)dy = \sum_{a\in \okt/1+\pi^n\got{o}_K} \zeta^{\tr(a/\pi^{\delta+n})}\chi(a)$, which is known to be non-zero. In particular, with fixed $\tilde{\chi}$ of level $n$, we have an equality of rigid meromophic functions in the variable $\lambda \in \C_p^\times$:
		\[
		\frac{Z(h_n,\tilde{\chi}\chi_\lambda)}{Z(\hat{h}_n,(\tilde{\chi}\chi_{\lambda})^*)} 
		= 
		\begin{cases*}
			q^{\delta/2}\lambda^{-\delta}\frac{1-\lambda^{-1}}{1-\lambda q^{-1}}, & if $n=0$;\\
			q^{\delta/2+n}\lambda^{-\delta-n}\got{g}(\tilde{\chi}^{-1})^{-1}, & if $n\ge 1$.\\
		\end{cases*}
		\]
	\end{enumerate}
	
	We can now announce the main theorem:
	\begin{thm}
		\label{thm.local-functional-equation}
		Suppose $f$ is a dissipative function and both $f$ and $\hat{f}$ are of Schwartz class $c$ for some $c\ge 1$. Then, for any equivalence class $\got{C}$ of characters parametrized by $\C_p^\times$, the zeta integral $Z(f,\chi)$ can be analytically continued on the Laurent domain $\{\chi\in \got{C}: c^{-1}\le \norm{\chi} \le c\}$, by the functional equation
		\[Z(f,\chi) = \rho(\chi) Z(\hat{f},\chi^*).\]
		
		Here, $\rho(\chi)$ is a meromorphic function on each equivalence class of characters, defined by the functional equation itself when the modulus of $\chi$ is $1$, and for all multiplicative characters by analytic continuation.
	\end{thm}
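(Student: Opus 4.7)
The plan is to establish the theorem in two steps: first, to define $\rho(\chi)$ on the circle of characters of modulus $1$ as a ratio of zeta integrals, verify via Lemma \ref{lem.local-fe} that it is independent of $f$, and exhibit it as a rigid meromorphic function of $\lambda\in \C_p^\times$ using the explicit auxiliary functions from Example~(b); second, to use the functional equation to glue $Z(f,\chi)$ with $\rho(\chi)Z(\hat{f},\chi^*)$ along this circle, yielding the meromorphic continuation.

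For the first step, fix an equivalence class with representative $\tilde{\chi}$ of level $n\ge 0$, and take $\lambda\in \C_p^\times$ with $|\lambda|_p = 1$, so that $\tilde{\chi}\chi_\lambda$ has modulus $1$. Since $f$ and $\hat{f}$ are of Schwartz class $c\ge 1$, they are in particular of Schwartz class $1$ by Corollary \ref{cor.schwartz-filtration}. Picking $g = h_n$ from Example~(b), which together with $\hat{h}_n$ is of Schwartz class $(0,\infty)$, Lemma \ref{lem.local-fe} yields
\[
Z(f,\tilde{\chi}\chi_\lambda)\,Z(\hat{h}_n,(\tilde{\chi}\chi_\lambda)^*) \;=\; Z(\hat{f},(\tilde{\chi}\chi_\lambda)^*)\,Z(h_n,\tilde{\chi}\chi_\lambda),
\]
so the ratio $Z(h_n,\tilde{\chi}\chi_\lambda)/Z(\hat{h}_n,(\tilde{\chi}\chi_\lambda)^*)$, read off from the tables in Example~(b), is an explicit rational function of $\lambda$; I take this expression as the definition of $\rho(\tilde{\chi}\chi_\lambda)$, which manifestly extends to a rigid meromorphic function on all of $\C_p^\times$.

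For the second step, Lemma \ref{lem.analyticity} shows that $\lambda\mapsto Z(f,\tilde{\chi}\chi_\lambda)$ is rigid analytic on $\{1\le|\lambda|_p\le c\}$, while under the rigid analytic substitution $\mu = q\lambda^{-1}$ (valid since $|q|_p = 1$) the same lemma applied to $\hat{f}$ shows that $\lambda\mapsto Z(\hat{f},(\tilde{\chi}\chi_\lambda)^*)$ is rigid analytic on $\{c^{-1}\le|\lambda|_p\le 1\}$. Multiplying by $\rho$ produces a rigid meromorphic function on this smaller annulus, and by the very definition of $\rho$ on the circle $|\lambda|_p = 1$ the two sides agree there; they therefore glue to a rigid meromorphic function on $\{c^{-1}\le|\lambda|_p\le c\}$, furnishing the desired continuation.

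The principal subtlety will be ensuring that $\rho$ extends to a \emph{rigid meromorphic} function on $\C_p^\times$, rather than merely to a rigid analytic function on the circle $|\lambda|_p = 1$: this is exactly what the test functions $h_n$ deliver, by yielding a closed-form expression indexed by the level of $\tilde{\chi}$. A secondary point is to check that $Z(\hat{h}_n,(\tilde{\chi}\chi_\lambda)^*)$ does not vanish identically, so that the ratio is indeed meromorphic and not of indeterminate form; this is transparent from the nonvanishing of the local Gauss-type factor $G(\tilde{\chi}^{-1})$ tabulated in Example~(b).
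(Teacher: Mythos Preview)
Your proposal is correct and follows essentially the same route as the paper: define $\rho(\chi)$ via the explicit test functions $h_n$ of Example~(b), invoke Lemma~\ref{lem.local-fe} together with Corollary~\ref{cor.schwartz-filtration} to obtain the functional equation on the circle $|\lambda|_p=1$, and then use Lemma~\ref{lem.analyticity} on each side to glue across that circle and extend to the full annulus $\{c^{-1}\le|\lambda|_p\le c\}$. Your added remarks on the substitution $\mu=q\lambda^{-1}$ and on the nonvanishing of $Z(\hat{h}_n,\chi^*)$ via the Gauss sum are exactly the points the paper leaves implicit.
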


	\begin{proof}
		For any $n\in \Z_{\ge 0}$ and any multiplicative character $\chi = \tilde{\chi}\chi_\lambda$ of level $n$, we define $\rho(\chi)$ to be $Z(h_n,\chi)/Z(\hat{h}_n,\chi^*)$, as appeared in the second example above. It is clear that $\rho(\chi)$ is rigid meromorphic on each equivalence class of characters, and is entire and non-vanishing unless $n=0$, in which case it has a unique pole at $\lambda =q$ and a unique zero at $\lambda =1$.
		
		\vspace{3mm}
		
		By Corollary \ref{cor.schwartz-filtration}, we know that $f$ and $\hat{f}$ are of Schwartz class $1$, and thus by Lemma \ref{lem.local-fe}, for any character of modulus $1$ and level $n$, we have the functional equation
		\[
		Z(f,\chi) = \frac{Z(h_n,\chi)}{Z(\hat{h}_n,\chi^*)}Z(\hat{f},\chi^*) = \rho(\chi) Z(\hat{f},\chi^*).
		\]
		
		Now fix an equivalence class of multiplicative characters, and let $\chi = \tilde{\chi}\chi_{\lambda}$ only range within this class (namely fix $\tilde{\chi}$ and let $\lambda$ range in $\C_p^\times$). Then, the above equality is an equality between two rigid meromorphic functions on the Laurent domain $\{\lambda\in \C_p^\times: |\lambda|_p = 1\}$. Moreover, by Lemma \ref{lem.analyticity} we know that $Z(\hat{f},\chi^*) = Z(\hat{f},\tilde{\chi}^{-1}\chi_{q\lambda^{-1}})$ is an analytic function on the Laurent domain $\{\lambda\in \C_p^\times,c^{-1}\le |\lambda|_p\le 1\}$. As such, $\rho(\chi)Z(\hat{f},\chi^*)$ is in fact a meromorphic function on the Laurent domain $\{\lambda\in\C_p^\times: c^{-1} \le |\lambda|_p\le 1\}$, extending the analytic function $Z(f,\chi)$ on  $\{\lambda\in\C_p^\times: |\lambda|_p = 1\}$. Therefore, the function $Z(f,\chi)$, which \textit{a priori} is analytic on $\{\lambda\in\C_p^\times: 1\le |\lambda|_p\le c\}$ by Lemma \ref{lem.analyticity}, can be extended to $\{\lambda\in\C_p^\times: c^{-1}\le |\lambda|_p\le c\}$, as was to be shown.
	\end{proof}

	\textit{Remarks.} 
	\begin{enumerate}
		\item We have implicitly used the unicity of analytic continuation for Laurent domains, which we briefly explain here for completeness. For $\alpha_1,\alpha_2,\beta_1,\beta_2\in \C_p^\times$ such that $|\alpha_2|_p\le |\alpha_1|_p \le |\beta_1|_p\le |\beta_2|_p$, we have an inclusion of the Laurent domains $D_1 = \{\lambda\in \C_p^\times: |\alpha_1|_p \le \lambda\le |\beta_1|\} \subseteq D_2 = \{\lambda\in \C_p^\times: |\alpha_2|_p \le \lambda\le |\beta_2|\}$. In this case, it is easy to show that the analytic continuation for any meromorphic function from $D_1$ to $D_2$, if exists, is unique, because the induced map on affinoid algebras $\C_p\chx{\frac{X}{\beta_2},\alpha_2 Y}/(XY-1) \to \C_p\chx{\frac{X}{\beta_1},\alpha_1 Y}/(XY-1)$ is injective.
		
		\item To us it is truly remarkable that we can analytically extend the zeta integrals below modulus $1$. This is all because the Laurent domain $\{\lambda\in \C_p^\times: |\lambda|_p =1\}$ is already (admissible) open in $\C_p^\times$, which is of true $p$-adic nature. The reader should compare this to the classical treatment in §2.4 of \cite{Ta50}, which uses the common region of convergence $0<\text{exponent}<1$ to ensure the complex analytic continuation.
		
		\item As noted in the appendix, our normalization is slightly different from that of Tate. In particular, our $p$-adic $\rho(\chi)$ corresponds to Tate's complex $\rho_{\C}(\chi|\cdot|)$. For example, fix $n\in \Z$ and take $\chi: K^\times \to K^\times$ to be $\chi(x) = q^{-n\ord_\pi(x)}$, which we can regard as both a $p$-adic and complex multiplicative character. In this case, we have the equality (\textit{cf.} p322, \cite{Ta50}):
		\[\rho(\chi) = q^{\delta/2}\chi(\pi)^{-\delta}\frac{1-\chi(\pi)^{-1}}{1-\chi(\pi) q^{-1}} = (q^{\delta})^{(n+1)-1/2}\frac{1-q^{(n+1)-1}}{1-q^{-(n+1)}} =\rho_{\C}(|\cdot|^{n+1})= \rho_{\C}(\chi|\cdot|).\]
		
		\item The factor $\rho(\chi)$ is in fact dependent on $\zeta\in T_\ell(\G_m)$ chosen: if we chose $\zeta^t$ where $t\in \Z_\ell^\times$ then $\hat{f}_{\zeta^t}(r) = \hat{f}_{\zeta}(tr)$, so $\rho_{\zeta^t}(\chi) = \chi(t)\rho_{\zeta}(\chi)$. Note that this dependence is invisible if $\chi$ is unramified; and, when $\chi$ is ramified, is reflected on the dependence of Gauss sum $\got{g}(\chi)$ on the choice of $\zeta$. 
	\end{enumerate}
	
	\section*{Appendix A : An informal approach}
	\addcontentsline{toc}{section}{Appendix A : An informal approach}
	
	In this appendix we will describe an informal approach towards the $p$-adic local functional equation, followed by explicit computations for a certain family of functions. To us, such computations really form the genesis of this note, while the main sections are developed so as to formalize and justify these computations.
	
	\vspace{3mm}
	
	Recall, as discussed in the introduction, the complex local functional equation takes the form:
	\begin{align}
		\tag{A.1}
		\label{equation.lfe-appdx}
		\int_{K}f(x)\chi(x) \frac{dx}{|x|} = \rho_{\C}(\chi) \int_{K}\hat{f}(x)\hat{\chi}(x)\frac{dx}{|x|},
	\end{align}
	
	where $\chi = c|\cdot|^s :K^\times\to \C^\times$ is a continuous character with $0<\re(s)<1$, and $\hat{\chi} = |\cdot|\chi^{-1}$. As such, to ask whether there is a $p$-adic valued analogue, the first thing confronting us is to define relevant objects appearing in \eqref{equation.lfe-appdx} for the $p$-adic setting. More precisely, to formulate \eqref{equation.lfe-appdx} with values in $\C_p$, there are 3 questions we need to answer:
	\begin{enumerate}
		\item[(i)] What is the $p$-adic analogue of $\chi$?
		
		\item[(ii)] What is the $p$-adic analogue of $dx$, and with which can we still do integration?
		
		\item[(iii)] What is the appropriate $p$-adic analogue of Fourier transform, and the subset of functions on which it is defined?
	\end{enumerate}
	
	For simplicity, we shall restrict our attention to the case $K = \Q_\ell$ in this appendix. Firstly, to answer (i), we may tentatively forget about the condition on $\re(s)$, and look at any continuous character from $\Q_\ell^\times$ to $\C_p^\times$. For (ii), we follow the perspective of Riesz-Markov-Kakutani, and think of $dx$ as first a linear functional defined on $\C_p$-valued compactly supported locally constant functions by $dx(\1_{a+\ell^n \Z_\ell}) = \ell^{-n}$, and then extend it by continuity to the completion of this subspace under the sup norm induced by $|\cdot|_p$. Finally we can answer (iii) by na\"ively transplanting the complex-valued Fourier transform on $\Q_\ell$ in the $p$-adic setting, which, at first sight, might appear both artificial and miraculous. Indeed, from complex Fourier transform we have:
	\[
		f = \1_{a+\ell^N\Z_\ell} \longleftrightarrow \hat{f}(t)=\int_{\Q_\ell} f(x) e^{-2\pi i tx} dx 
		= e^{-2\pi iat}\frac{1}{\ell^N}\1_{\ell^{-N}\Z_\ell}(t) ,
	\]
	
	where we recall for $x = \sum_{n\ge h}a_{n}\ell^{n}\in \Q_\ell$, $e^{2\pi ix} = e^{2\pi i \sum_{n=h}^{-1}a_n\ell^{n}}$. As such, it shows $\hat{f}(t)$, \textit{a priori} valued in $\C$, is in fact taking its values in $\Q(\mu_{\ell^\infty}) \subset \C$, because with $t\in \Q_\ell$, $e^{-2\pi i at}$ is an $\ell^{-\ord_\ell(at)}$-th root of unity. Consequently, we may choose once and for all an embedding $\iota_p: \Q(\mu_{\ell^\infty}) \hookrightarrow \C_p$, and define the $p$-adic \textit{mock Fourier transform} $\cali{F}$ as an endomorphism on the completed space from our answer to (ii) by 
	\[\cali{F}(\1_{a+\ell^N\Z_\ell})(x) = \iota_p(e^{-2\pi iax})\frac{1}{\ell^N} \1_{\ell^{-N}\Z_\ell}(x)
	= \frac{1}{\ell^N}\sum_{b\in \Z/\ell^N} \iota_p(e^{-2\pi i\frac{ab}{\ell^N}})\1_{\frac{b}{\ell^N}+\Z_\ell}.\] 
	
	In short, we answered (iii) by interpolating the complex Fourier transform. 
	
	\vspace{3mm}
	
	Now with the three questions resolved, if we assume like \eqref{equation.lfe-appdx} there is also a factor $\rho(\chi)\in \C_p$ independent of $f$ in our $p$-adic setting, we are ready to compute it. As an illustration, we do it explicitly in the unramified case below:
	
	\begin{itemize}
		\item Let $\chi: \Q_\ell^\times \to \C_p^\times$ be an unramified character. Clearly $\chi$ is determined by $\chi(\ell)\in \C_p^\times$, which we denote by $\lambda$.
		
		\item Let $dx$ be the linear functional as above, whose domain is the completion of compactly supported locally constant functions. For example, $dx$ can be evaluated at $\sum_{n\ge 0} p^n \1_{\ell^n \Z_\ell^\times}$, and the evaluation is $(1-1/\ell)\sum_{n\ge 0}p^n\ell^{-n}$.
		
		\item Fix any $\alpha\in \C_p$ such that $|\alpha|_p< \min\{|\lambda|_p,|\lambda|_p^{-1} \}$. Define the continuous function
		\[
		G[\alpha](x) = \sum_{n\ge 0} (\alpha\ell)^{n}\1_{\ell^n \Z_\ell^\times} - \frac{1}{\ell} \sum_{n\ge 0} (\alpha/\ell)^{n}\1_{\ell^{-n-1}\Z_\ell^\times}.
		\]
		
		Then because $\1_{\ell^n \Z_\ell^\times} = \1_{\ell^n \Z_\ell} - \1_{\ell^{n+1}\Z_\ell}$, the mock Fourier transform $\cali{F}$ has the effect: $\cali{F}(\1_{\ell^n \Z_\ell^\times}) = \ell^{-n}\1_{\ell^{-n}\Z_\ell} - \ell^{-n-1}\1_{\ell^{-n-1}\Z_\ell}$. By direct computation, we find: 
		\begin{itemize}
			\item $\cali{F}(\sum_{n\ge 0}(\alpha\ell)^n \1_{\ell^n\Z_\ell^\times}) = \frac{1-1/\ell}{1-\alpha}\1_{\Z_\ell}
			-\frac{1-\alpha\ell}{\ell(1-\alpha)}\sum_{n\ge 0} \alpha^n \1_{\ell^{-n-1}\Z_\ell^\times}$.
			
			\item $\cali{F}(\sum_{n\ge 0}(\alpha/\ell)^n\1_{\ell^{-n-1}\Z_\ell^\times}) = \frac{\ell(1 -1/\ell)}{1-\alpha}\1_{\Z_\ell} - \frac{\ell(1-\alpha/\ell)}{1-\alpha} \sum_{n\ge 0}\alpha^n\1_{\ell^n\Z_\ell^\times}$.
			
			\item Thus $\cali{F}(G[\alpha]) = \frac{1-\alpha/\ell}{1-\alpha}\sum_{n\ge 0}\alpha^n\1_{\ell^n \Z_\ell^\times}
			-\frac{1-\alpha\ell}{\ell(1-\alpha)}\sum_{n\ge 0}\alpha^n\1_{\ell^{-n-1}\Z_\ell^\times}$.
		\end{itemize}
		
		\item We may now compute the two integrals:
		\begin{align*}
			\begin{split}
				\int_{\Q_\ell} G[\alpha](x)\chi(x)dx 
				& = \sum_{n\ge 0}(\alpha\ell)^n\int_{\ell^n\Z_\ell^\times}\chi(x)dx - \frac{1}{\ell}\sum_{n\ge 0}(\alpha/\ell)^n\int_{\ell^{-n-1}\Z_\ell^\times}\chi(x)dx\\
				& = \sum_{n\ge 0}\alpha^n \lambda^n (1-1/\ell) - \frac{1}{\ell}\sum_{n\ge 0}
				\alpha^n \lambda^{-n-1}\ell(1-1/\ell) \\
				& = \frac{1-1/\ell}{1-\alpha\lambda} - \frac{1-1/\ell}{\lambda(1-\alpha\lambda^{-1})}\\
				& = \frac{(1-1/\ell)(1+\alpha)(1-\lambda^{-1})}{(1-\alpha\lambda)(1-\alpha\lambda^{-1})}.
			\end{split}
		\end{align*}
		
		If we let $\chi^* = \chihaar^{-1}\chi^{-1}$ where $\chihaar(x) = 1/\ell^{\ord_\ell(x)}$, then
		\begin{align*}
			\begin{split}
				\int_{\Q_\ell}\cali{F}(G[\alpha])(x)\chi^*(x)dx
				&= \frac{1-\alpha/\ell}{1-\alpha}\sum_{n\ge 0}\alpha^n\int_{\ell^n\Z_\ell^\times}\chi^*(x)dx - \frac{1-\alpha\ell}{\ell(1-\alpha)}\sum_{n\ge 0}\alpha^n\int_{\ell^{-n-1}\Z_\ell^\times}\chi^*(x)dx\\
				&= \frac{1-\alpha/\ell}{1-\alpha}\sum_{n\ge 0} \alpha^n(\ell\lambda^{-1})^n\ell^{-n}(1-1/\ell) - \frac{1-\alpha\ell}{\ell(1-\alpha)}\sum_{n\ge 0}\alpha^n(\ell^{-1}\lambda)^{n+1}\ell^{n+1}(1-1/\ell)\\
				&= \frac{(1-\alpha/\ell)(1-1/\ell)}{(1-\alpha)(1-\alpha\lambda^{-1})}- \frac{(1-\alpha\ell)(1-1/\ell)}{\ell(1-\alpha)}\frac{\lambda}{1-\alpha\lambda}\\
				&= \frac{(1-1/\ell)(1+\alpha)(1-\lambda/\ell)}{(1-\alpha\lambda)(1-\alpha\lambda^{-1})}.
			\end{split}
		\end{align*}
		
		We then conclude that
		\begin{align}
			\tag{A.2}
			\label{equation.local-fe-baby}
			\int_{\Q_\ell} G[\alpha](x)\chi(x) dx = \rho(\chi)
			\int_{\Q_\ell} \cali{F}(G[\alpha])(x)\chi^*(x) dx,
		\end{align}
		
		where $\rho(\chi) = \frac{1-1/\lambda}{1-\lambda/\ell}$.
	\end{itemize}
	
	\textit{Remarks.}
	\begin{enumerate}
		\item Under proper normalization, \eqref{equation.local-fe-baby} takes the same form as its complex counterpart \eqref{equation.lfe-appdx}. In fact, if we take the ``multiplicative Haar measure'' $\textsf{d}x = dx/\chihaar(x)$, then \eqref{equation.local-fe-baby} shows for $f=G[\alpha]$:
		\[
		\int_{\Q_\ell} f\chi \textsf{d}x = \rho(\chi\chihaar^{-1})\int_{\Q_\ell}\hat{f} \chihaar\chi^{-1}\textsf{d}x,
		\] 
		
		which can be formally obtained from \eqref{equation.lfe-appdx} by replacing every complex object by its $p$-adic analogue. We note, however, that our normalization is different from that of \cite{Ta50}, and our $\rho(\chi)$ is really the $p$-adic analogue of the complex factor $\rho_{\C}(\chi|\cdot|)$.
		
		\item We did not take the test function to be $\1_{\Z_\ell}$ as \cite{Ta50} did, for lacking a common region of convergence: On the one hand, $\int_{\Q_\ell}\1_{\Z_\ell} \chi dx$ only converges when $|\lambda|_p<1$ (or $\lambda =1$); and on the other, as $\cali{F}(\1_{\Z_\ell}) = \1_{\Z_\ell}$, the integral $\int_{\Q_\ell}\cali{F}(\1_{\Z_\ell})\chi^* dx$ converges only when $|\chi^*(\ell)|_p = |\ell/\lambda|_p<1$ or $\ell/\lambda = 1$, in which case neither $|\lambda|_p<1$ or $\lambda =1$ could hold true.
		
		\item In the above computations, we have shown that the factor $\rho(\chi)$ is independent of $\alpha$. To prove this fact, we could have avoided the computations if $\lambda\in \bar{\Q}$, by a transcendental method: Fix some transcendental $\tilde{\alpha} \in \C^\times$ such that $|\tilde{\alpha}|_\C < \min\{|\lambda|_{\C},|\lambda^{-1}|_{\C}\}$, and define a complex-valued function $G[\tilde{\alpha}]$ as above by replacing $\alpha$ by $\tilde{\alpha}$. We may then relate the complex setting to the $p$-adic setting by a topological field embedding $\bar{\Q}((\tilde{\alpha})) \hookrightarrow \C_p$ sending $\tilde{\alpha}$ to $\alpha$. The existence of some $\rho(\chi)$ independent of $\alpha$ or $\tilde{\alpha}$ is a corollary of Tate's complex-valued local functional equation. 
		
		\item When we answered question (ii), we ignored the condition on the exponent, namely $0<\re(s)<1$ if the complex character is $c|\cdot|^s$ where $c$ is unitary. The computations above have revealed that the $p$-adic avatar of this condition is a statement about $|\chi(\ell)|_p$. More specifically, with the function $G[\alpha]$ fixed, the local functional equation holds true under the following $p$-adic-natured condition
		\begin{align*}
			|\alpha|_p<|\chi(\ell)|_p<|\alpha|_p^{-1}.
		\end{align*}
		
		This and the next point, in turn, give a preview to Theorem \ref{thm.local-functional-equation}.
		
		\item By letting the parameter $\lambda=\chi(\ell)$ vary, we get a bijection between the rigid analytic variety $\C_p^\times$ and the set of all unramified characters $\Q_\ell^\times \to \C_p^\times$. We then see that the factor $\rho(\chi) = \frac{1-1/\lambda}{1-\lambda/\ell}$, with the variable $\lambda$, is in fact a rigid meromorphic function on $\C_p^\times$. Similarly, when $\alpha$ is fixed, we see that $\int_{\Q_\ell} G[\alpha]\chi dx$ and $\int_{\Q_\ell} \cali{F}(G[\alpha]) \chi^* dx$ are rigid analytic functions on the annulus $\{\lambda \in \C_p^\times: |\alpha|_p^{-1} \le |\lambda|_p \le |\alpha|_p\}$. As such, \eqref{equation.local-fe-baby} becomes an equality between two rigid analytic functions on the annulus, and thus is a bona fide functional equation. This perspective, in turn, concurs well with the one taken in \cite{Ta50}, for which the meromorphic/analytic functions are instead on the Riemann surface $\C/\left(\frac{2\pi i}{\log q}\right)$.
		
		\item We have assumed $\ell\ne p$, but this condition is not essential to the calculations above. The hidden complicacy is that, in the $\ell=p$ case, there is not a well-defined notion of Haar measure, even on the subgroup $\Z_p$. For example, a non-zero bounded $\C_p$-valued measure $\mu$ on $\Z_p$ cannot be invariant under addition, because otherwise for any subset $E$ of non-zero volume, $\mu(p^n E) = p^{-n} \mu(E)$ will tend to infinity when $n$ tends to infinity, and thus unbounded. In another vein, if we want to realize the Haar measure on $\Z_p$ as a locally analytic distribution $\mu$ (i.e., a \textit{distribution continue} as in §1.1.2 of \cite{Co98}) such that $\mu(a+p^n\Z_p) = p^{-n}$ for all $a\in \Z_p$ and $n\in \Z_{\ge 0}$, then the problem is that $\mu$ is not unique: for all $a\in \Z_p$, one can show that, under the Amice transform, the locally analytic distribution $\mu_a$ corresponding to $t^a \frac{\log t}{t-1} \in  \Q_p[[t-1]]$ satisfies this property. That being said, one should note that there is no locally analytic distribution $\mu$ such that $\mu(a+x) = \mu(x)$ for all $a\in \Z_p$, and thus none of the above $\mu_a$ is genuinely addition-invariant.
	\end{enumerate}
	
	\section*{Appendix B : Towards a hypothetical global functional equation}
	
	\addcontentsline{toc}{section}{Appendix B : Towards a hypothetical global functional equation}
	
	Recall that for the complex $L$-function attached to a Dirichlet character $\varphi: \ide_{\Q} \to \bar{\Q}^\times$, we have the global functional equation
	\begin{align}
		\tag{B.1}
		\label{equation.complex-global}
		L(s,\varphi) = \rho(s,\varphi) L(1-s,\varphi)
	\end{align}
	
	for some meromorphic factor $\rho(s,\varphi)$. By the ``theory in the large'' in Tate's thesis, also known as Iwasawa-Tate theory \cite{Iw52}, for a properly normalized multiplicative Haar measure $d^\times a$, \eqref{equation.complex-global} can be re-written as
	\begin{align*}
		\int_{\ide_\Q} f(a)\varphi(a)|a|^s d^\times a = \int_{\ide_\Q} \hat{f}(a)\varphi^{-1}|a|^{1-s}d^\times a.
	\end{align*}
	
	In this appendix, as a continuation of of our methodology, we will record our attempt in seeking a $p$-adic valued global theory, and obtain, via tautology, an idelic integral representation of the $p$-adic polylogarithm. To us, even incomplete, such a theory seems to suggest a potential global functional equation for the $p$-adic zeta function, and thus deserves further investigations.
	
	\subsection*{B.1\quad Idelic characters}
	
	Let $\adefp$ denote the restricted product $\prod_{\ell \ne p}' \Q_\ell$ with respect to $\prod_{\ell \ne p}\Z_\ell$, and similarly $\idefp = \prod_{\ell \ne p}' \Q_\ell^\times$ with respect to $\prod_{\ell \ne p}\Z_\ell^\times$. The group of finite ideles is then given by $\ide_f= \Q_p^\times \times \idefp$. Let also $\Z_{(p)}^+ = \{r\in \Q_{>0}: \gcd(r,p)=1\}$, which, as a group, is isomorphic to $\oplus_{\ell\ne p}\ell^{\Z}$ by unique factorization. We then have a diagonal embedding $\Z_{(p)}^+ \hookrightarrow \idefp$, and it is easy to work out the identification
	\begin{align}
		\tag{B.2}
		\label{equation.idelic-quot}
		\Z_{(p)}^+ \backslash \idefp \xrightarrow{\sim} \prod_{\ell \ne p}\Z_\ell^\times.
	\end{align}
	
	To define characters on $\idefp$, we recall briefly the classical construction of $p$-adically valued idelic characters. Let $\chi_\C: \Q^\times\backslash \ide_{\Q} \to \C^\times$ be a complex idelic character of infinity type $n\in \Z$, namely $\chi_{\C,\infty}(x) = x^{-n}$ for all $x\in \R_{>0}$, where $\chi_{\C,\infty}$ is the restriction of $\chi_{\C}$ on the local component $\R^\times$. The Serre-Tate construction \cite[§1]{Gr81} attaches to $\chi_\C$ a $p$-adic avatar $\chi_\C^{\rm ST}: \Q^\times_{+}\backslash \ide_{f} \to \C_p^\times$, defined by $\chi_\C^{\rm ST}(a) = \chi_\C(a)^{\sigma_p}\cdot a_p^{-n}$, where $a = (a_v)_{v<\infty}\in \ide_{f}$ and $\sigma_p: \bar{\Q}\hookrightarrow \C_p$ is a fixed embedding. Suppose furthermore that $\chi_\C$ is unramified, \textit{i.e.}, $\chi_\C(\prod_{v<\infty}\Z_v^\times)=1$. Then it can be shown that $\chi_\C^{\rm ST}$ is a continuous homomorphism from $\ide_{f}$ to $\C_p^\times$ that enjoys the following two properties:
	\begin{enumerate}
		\item[1)] $\chi_\C^{\rm ST}$ is trivial on the diagonal $\Q^\times_+\subset\ide_{f}$ and on the subgroup $\prod_{\ell \ne p}\Z_\ell^\times$.
		
		\item[2)] The restriction of $\chi_\C^{\rm ST}$ to the local component $\Q_p^\times$ is trivial on the cyclic subgroup $p^\Z$, and there exists an integer $n$ (the infinity type of $\chi_\C$) such that $\chi_\C^{\rm ST}$ is of the form $x\mapsto x^{-n}$ for $x\in \Z_p^\times$.
	\end{enumerate}
	
	The two properties in fact characterize the image under the Serre-Tate construction of unramified characters, but we will not need this fact. We emphasize that the true merit of this characterization is that it frees us from archimedean considerations. Mimicking this, we introduce
	
	\begin{de}
		A continuous homomorphism $\chi^p: \idefp\to \C_p^\times$ will be referred to as an unramified character on $\idefp$ if it is the restriction of some continuous homomorphism $\chi: \ide_f \to \C_p^\times$ that enjoys the following two properties:
		\begin{enumerate}
			\item[i)] $\chi$ is trivial on the diagonal $\Q^\times_{+}\subset \ide_f$ and on the subgroup $\prod_{\ell \ne p}\Z_\ell^\times$.
			
			\item[ii)] The restriction of $\chi$ to the local component $\Q_p^\times$ is trivial on the cyclic subgroup $p^\Z$.
		\end{enumerate}
		
		In what follows, for a finite place $v$ we denote by $\chi_v$ the local character $\chi_v: \Q_v^\times\hookrightarrow \ide_{f}\xrightarrow{\chi} \C_p^\times$.
	\end{de} 
	
	\textit{Remarks.}
	\begin{enumerate}
		\item There is at most one character $\chi: \ide_{f}\to \C_p^\times$ satisfying i) and ii) that restricts to a given $\chi^p:\idefp\to \C_p^\times$. This is because $\chi = \chi_p\chi^p$, and properties i) and ii) show that the restriction of $\chi_p$ on the dense subset $p^{\Z}\Z_{(p)}^+\subset \Q_p^\times$ is determined by $\chi^p$, and thus $\chi_p$ is determined by $\chi^p$.
		
		\item Property ii) is redundant, since i) implies that $\chi_p(p)\prod_{\ell \ne p}\chi_\ell(p) = 1$ and $\chi_\ell(p) = 1$ for all $\ell\ne p$.
	\end{enumerate}
	
	From now on we will often drop the adjective ``unramified'' since it is the only scenario under consideration. Also, when there is no confusion, we shall not distinguish the character $\chi^p: \idefp\to \C_p^\times$ from its lift $\chi: \ide_{f}\to \C_p^\times$, thanks to the first remark.
	
	\begin{prop}
		There is a bijection from the set of unramified characters on $\idefp$ to continuous homomorphisms $\lambda_p: \Z_p^\times \to \got{o}_p^\times$; the map is given by the restriction $\chi \mapsto \chi|_{\Z_p^\times}$.
	\end{prop}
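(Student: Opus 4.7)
The plan is to give explicit formulas recovering $\chiA$ from $\lambda_p$ and then verify that the two constructions are mutually inverse. First, since $\Z_p^\times$ is compact and $\chi_p\colon \Q_p^\times \to \C_p^\times$ is continuous, the image $\chi_p(\Z_p^\times)$ is a compact subgroup of $\C_p^\times$ and so lies in $\got{o}_p^\times$; thus the restriction $\lambda_p := \chi_p|_{\Z_p^\times}$ really takes values in $\got{o}_p^\times$, and the asserted map is well-defined.

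For injectivity, I would exploit the triviality of $\chiA$ on the diagonally embedded $p^\Z \Z_{(p)}^+$ together with the unramified-ness of $\chiA^p$ to pin down every local component. Evaluating $\chiA$ at the diagonal $p$ gives $1 = \chi_p(p)\prod_{\ell \ne p}\chi_\ell(p)$; since $p \in \Z_\ell^\times$ for each $\ell \ne p$, unramified-ness forces $\chi_\ell(p) = 1$ and hence $\chi_p(p) = 1$. Combined with $\lambda_p$, this determines $\chi_p$ on the whole of $\Q_p^\times = p^\Z \times \Z_p^\times$. Similarly, for each prime $\ell'\ne p$, evaluating $\chiA$ at the diagonal $\ell'\in \Z_{(p)}^+$ produces $1 = \lambda_p(\ell')\cdot\chi_{\ell'}(\ell')$, whence $\chi_{\ell'}(\ell') = \lambda_p(\ell')^{-1}$; together with unramified-ness this determines each $\chi_{\ell'}$, and hence $\chiA^p$ by the decomposition \eqref{equation.idelic-quot}.

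For surjectivity I would read the above formulas backwards. Given a continuous $\lambda_p\colon \Z_p^\times \to \got{o}_p^\times$, set $\chi_p(p^n u) := \lambda_p(u)$ for $u\in \Z_p^\times$ and $n\in \Z$, and for each prime $\ell \ne p$ let $\chi_\ell$ be the unramified local character on $\Q_\ell^\times$ with $\chi_\ell(\ell) := \lambda_p(\ell)^{-1}$, which is well-defined because $\ell \in \Z_p^\times$. Continuity of $\chi_p$ is immediate from that of $\lambda_p$, while the global character $\chiA^p$ assembled from the $\chi_\ell$'s is automatically continuous since it is trivial on the open subgroup $\prod_{\ell \ne p}\Z_\ell^\times$ of $\ide_{f,p}$. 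It then remains to verify that $\chiA = \chi_p\cdot \chiA^p$ is trivial on the generators $p$ and the primes $\ell \ne p$ of $p^\Z\Z_{(p)}^+$, which is a direct recapitulation of the two identities used in the injectivity step.

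The whole argument is a matter of keeping local and global data aligned; I do not foresee a genuine obstacle beyond careful bookkeeping of which component of the idele records which piece of the data. The only substantive input is the compactness observation at the very start, which ensures $\lambda_p$ genuinely lands in the bounded group $\got{o}_p^\times$ rather than merely in $\C_p^\times$.
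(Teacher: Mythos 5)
Your proposal is correct and follows the same route as the paper: pin down $\chi_p(p)$ and each $\chi_\ell(\ell)$ from the triviality of $\chiA$ on $p$ and the primes $\ell' \ne p$ (using unramifiedness of the $\chi_\ell$ and the decomposition \eqref{equation.idelic-quot}), then reverse the formulas to construct the character from $\lambda_p$. The only differences are that you make explicit a few points the paper leaves implicit — that $\lambda_p$ lands in $\got{o}_p^\times$ by compactness of $\Z_p^\times$, and that the assembled $\chiA^p$ is automatically continuous because it kills the open subgroup $\prod_{\ell\ne p}\Z_\ell^\times$ — which is reasonable bookkeeping rather than a different argument.
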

	
	\begin{proof}
		Suppose $\chi^p$ is an unramified character on $\idefp$. That $\lambda_p = \chi|_{\Z_p^\times}$ is determined by $\chi^p$ follows from the first remark above. Conversely, if $\lambda_p$ is a character $\Z_p^\times \to \got{o}_p^\times$, then we may define $\chi_p$ to be the unique extension of $\lambda_p$ to $\Q_p^\times$ trivial on $p^\Z$, and $\chi_\ell$ to be the unramified local character with $\chi_\ell(\ell) = \chi_p(\ell)^{-1}$. In turn, by taking $\chi^p = \prod_{\ell\ne p}\chi_\ell$ and $\chi = \chi_p\chi^p$ we have a desired unramified character on $\idefp$.
	\end{proof}
	
	\textbf{Examples.} Thus to construct a character on $\idefp$ it suffices to construct one on $\Z_p^\times$. We then have the \textit{principal} cyclotomic character $\kappa: \Z_p^\times \twoheadrightarrow 1+p\Z_p \hookrightarrow \got{o}
	_p^\times$, of which the character on $\idefp$ is such that $\kappa(n\prod_{\ell\ne p}\Z_\ell^\times) = \chx{n}^{-1}$ for all $n\in\Z_{(p)}^+$. Also, for any Dirichlet character $\psi: (\Z/p^t)^\times \to \bar{\Q}^\times$, we may regard it as a character on $\Z_p^\times$, and thus on $\idefp$; its evaluation at the coset $n\prod_{\ell \ne p}\Z_\ell^\times$ for $n\in \Z_{(p)}^+$ is a constant $\psi(n)^{-1}$. We note that the usual cyclotomic character, $\kappa\omega$, coincides with the absolute value $|\cdot|: \idefp \to \C_p^\times$, $x = (x_\ell)_{\ell\ne p} \mapsto \prod_{\ell\ne p} |x_\ell|_\ell$; here for $x_\ell\in \Q_\ell^\times$, $|x_\ell|_\ell = \ell^{-{\mathrm{ord}_\ell(x_\ell)}}$. In the following we shall restrict exclusively to characters on $\idefp$ of the form $\chi = \kappa^s \psi$ for some $s\in \Z_p$ and $\psi$ as above.
	
	\subsection*{B.2\quad Global theory of integration}
	
	Let $\cali{C}(\adefp,\got{o}_p)$ be the set of continuous functions from $\adefp$ to $\got{o}_p$. In the same spirit as §2, we have the following topological linearization:
	\begin{align*}
		\textstyle
		\cali{C}(\adefp,\got{o}_p) = \plim_{n} \plim_{N} \ilim_{M} \cali{C}(N^{-1}\prod_{\ell\ne p} \Z_\ell /M\prod_{\ell\ne p}\Z_\ell, \got{o}_p/p^n) = \plim_{n} \plim_{N} \ilim_{M} \cali{C}(N^{-1}\Z/M\Z, \got{o}_p/p^n).
	\end{align*}
	
	Next, we define $\cali{D}(\adefp,\got{o}_p) = \Hom^{\rm cts}_{\got{o}_p}(\cali{C}(\adefp,\got{o}_p),\got{o}_p)$. Again by the Cartier duality formalism, it can be shown that there is an embedding (coarse Fourier transform)
	\begin{align*}
		\cali{D}(\adefp,\got{o}_p) \simeq \plim_{n}\ilim_N \plim_M \cali{C}(M^{-1}\Z/N\Z, \got{o}_p/p^n) \hookrightarrow \cali{C}(\adefp,\got{o}_p).
	\end{align*}
	
	As usual, the explicit formula is
	\begin{align*}
		\mu \mapsto \hat{\mu}(b) = \int_{\adefp} \zeta^{ba} \mu(a)\quad\text{ for all }b\in \adefp.
	\end{align*}
	
	To introduce functions with desired decay, we consider $\cali{C}(\idefp,\got{o}_p)$, the space of continuous functions from $\idefp$ to $\got{o}_p$, which has a dense subspace $\cali{S}(\idefp,\op)$ consisting of compactly supported locally constant functions. On $\cali{S}(\idefp,\op)$ we have a linear functional $da$ whose evaluation at $\1_{\prod_{\ell \ne p}\Z_\ell^\times}$ is $1$, and satisfies the property $d(ta) = |t|da$ for all $t\in \idefp$. Furthermore, by a \textit{Schwartz class} function we mean a function $g\in \cali{C}(\idefp,\got{o}_p)$ such that for all $s\in \Z_p$ and all characters $\psi$, $g\kappa^s \psi da$ is in $\cali{D}(\adefp,\got{o}_p)$, where the pairing with any $f\in \cali{C}(\adefp,\got{o}_p)$ is via the following procedure:
	\[
	\begin{tikzcd}
		\cali{C}(\adefp,\got{o}_p) \ar[r,"\text{res}"] & \cali{C}(\idefp,\got{o}_p) \ar[r,"g\kappa^s \psi da"] &
		\got{o}_p\\
		f \ar[rr,mapsto]&&\int_{\idefp} f(a)g(a)\kappa^s\psi(a) da,
	\end{tikzcd}
	\]
	
	where $\text{res}: \cali{C}(\adefp,\got{o}_p) \to \cali{C}(\idefp,\got{o}_p)$ is the restriction map (valid and continuous because $\idefp$ has a finer topology than the inherited one from $\adefp$).
	
	\vspace{3mm}
	
	\textit{Remark.} Contrary to the local case, we do not have a well-behaved global multiplicative theory. More precisely, we see that there is no canonical section to the restriction map $\cali{C}(\adefp,\got{o}_p) \to \cali{C}(\idefp,\got{o}_p)$ since the topology of $\idefp$ is strictly finer than that of $\adefp$; while recall in the local case we essentially used the canonical section $\cali{C}(\Q_\ell^\times,\got{o}_p) \to \cali{C}(\Q_\ell,\got{o}_p)$ of extending by zero to define Schwartz class functions. In turn, this becomes problematic, as when $f\in \cali{C}(\idefp,\got{o}_p)$ is a Schwartz class function, $\hat{f}$ will be a function on $\adefp$, and is thus never of Schwartz class under our definition!
	
	\vspace{3mm}
	
	\textbf{Example.} The function $\1_{\prod_{\ell\ne p}\Z_\ell^\times}$ is of Schwartz class, because for any unramified character $\chi$ on $\idefp$ we have $\chi\1_{\prod_{\ell \ne p}\Z_\ell^\times} = \1_{\prod_{\ell \ne p}\Z_\ell^\times}$. Now let us compute its Fourier transform, which is given by
	\begin{align*}
		(\1_{\prod_{\ell\ne p}\Z_\ell^\times})^\wedge(b) = \int_{\prod_{\ell\ne p}\Z_\ell^\times} \zeta^{ba} da.
	\end{align*}
	
	Since for any $b\in \adefp$, its $\ell$-component is inside $\Z_\ell$ for all but finitely many $\ell$, we have 
	\begin{align*}
		\int_{\prod_{\ell\ne p}\Z_\ell^\times} \zeta^{ba}da = \lim_{S} \int_{\prod_{\ell\in S}\Z_\ell^\times} \zeta^{bx} \frac{dx}{\prod_{\ell\in S}(1-1/\ell)},
	\end{align*}
	
	where the limit is over all finite sets of primes not containing $p$, and $dx$ is the product of the local Haar measures defined in §2.3. We thus have
	\begin{align*}
		(\1_{\prod_{\ell\ne p}\Z_\ell^\times})^\wedge = \prod_{\ell\ne p} \frac{1}{1-1/\ell}(\1_{\Z_\ell^\times})^\wedge = \prod_{\ell\ne p} \left(\1_{\Z_\ell} + \frac{1}{1-\ell}\1_{\ell^{-1}\Z_\ell^\times}\right),
	\end{align*}
	
	which clearly is a uniformly continuous function on $\adefp$.
	
	\subsection*{B.3\quad Conjectural functional equation}
	
	We are now ready to indicate why the global integral is interesting. For $\alpha \in \C_p$ with $|\alpha|_p<1$, we consider the Schwartz class function $E_\alpha = \sum_{n\ge 1, p\nmid n} \alpha^n \1_{n\prod_{\ell \ne p}\Z_\ell^\times}$. Let also $\chi = (\kappa^{1-s}\psi\omega)^{-1}$ be a character on $\idefp$, with $s\in \Z_p$ and $\psi$ a Dirichlet character of a $p$-power conductor. Then we have the following integral representation of a (twisted) polylogarithm:
	\begin{align}
		\tag{B.3}
		\label{equation.integral-representation-idelic}
		\begin{split}
			\int_{\idefp} \chi(a)E_\alpha(a)da &= \sum_{n\ge 1, p\nmid n}\alpha^n \int_{\idefp}\chi(a)\1_{n\prod\Z_\ell^\times}(a) da\\
			&= \sum_{n\ge 1, p\nmid n} \alpha^n\chx{n}^{1-s}\psi\omega(n)\frac{1}{n}\\
			&= \sum_{n\ge 1, p\nmid n}\frac{\alpha^n\psi(n)}{\chx{n}^s}.
		\end{split}
	\end{align}
	
	Denote the last function by $\cali{L}(\alpha,s,\psi)$. By an observation of Koblitz \cite[paragraph following Lemma 1]{Ko79} and Coleman \cite[Proposition 6.2]{Co82}, for fixed $s$ and $\psi$, the function $\cali{L}(\alpha,s,\psi)$ is rigid analytic on $\mathbf{P}^1(\C_p) - \{z\in\C_p: |z-1|_p<1\}$. When $s=k\ge 2$ is an integer, Coleman \cite[Corollary 7.1a]{Co82} essentially showed that $\cali{L}(\alpha,k,\psi)$ can be defined on the removed disc $\{z\in\C_p: |z-1|_p<1\}$ so that it is continuous on $\mathbf{P}^1(\Q_p)$; moreover, the following identity holds:
	\begin{align*}
		\cali{L}(1,k,\psi) = L_p(k,\psi\omega),
	\end{align*}
	
	where the right hand side is the Kubota-Leopoldt $p$-adic $L$-function attached to the character $\psi\omega$. Alternatively, when $p\ne 2$ we may allow $s\in \Z_p$ to be arbitrary, and evaluate the polylogrithm $\cali{L}(\alpha,s,\psi)$ at $\alpha=-1$ to deduce
	\begin{align*}
		\cali{L}(-1,s,\psi) = \frac{1}{2}\sum_{n\ge 1} \sum_{p^{n-1}\le m<p^n,p\nmid m} \frac{(-1)^m\psi(m)}{\chx{m}^s} = -(1-\psi\omega(2)\chx{2}^{1-s})L_p(s,\psi\omega),
	\end{align*}
	
	where the last equality is in essence due to Delbourgo \cite[Theorem 1.1]{De06}.
	
	\vspace{3mm}
	
	To proceed further, may we be forgiven to make the following bold speculations: We expect that it should be possible to give a precise meaning to the integral $\int_{\idefp} \chi(a)\hat{E}_{\alpha}(a)da$ for any $\alpha\in \C_p$ with $|\alpha|_p<1$ and for any $\chi = \kappa^s\psi$; and there shall be a constant $\rho(\chi)\in \C_p^\times$ independent of $\alpha$ such that
	\begin{align*}
		\int_{\idefp} \chi(a)E_{\alpha}(a)da = \rho(\chi)\int_{\idefp} (\kappa\omega\chi)^{-1}(a)\hat{E}_{\alpha}(a) da.
	\end{align*}
	
	Granted this, take $\chi=(\kappa^{1-s}\psi\omega)^{-1}$ and let $\alpha$ tend to $1$, and we would obtain a conjectural functional equation of the Kubota-Leopoldt $p$-adic $L$-function:
	\begin{align}
		\tag{B.4}
		\label{equation.global-functional}
		L_p(s,\psi\omega) = \rho(\chi) L_p(1-s,\psi^{-1}).
	\end{align}
	
	We stress that the conjectural functional equation \eqref{equation.global-functional} has important consequences, since it can relate the values of the $p$-adic $L$-function at positive integers, which are at present largely mysterious, to those at negative integers, which are known to be Bernoulli numbers multiplied by the Euler factors at $p$. In particular, using it one should be able to tell whether or not $L_p(k,\psi\omega^{k-1})$ vanishes for any positive integer $k\ge 2$, which in turn is closely tied to whether some Iwasawa modules are of finite cardinality (for a nice summary see §3.3 of \cite{Co15}).
	
	\vspace{3mm}
	
	\textit{Final Remark.} Strictly speaking, our discussion above have only indicated the limit on the left hand side is valid when $s\ge 2$ is an integer. On the other hand, the limit on the right is heuristically obtained from \eqref{equation.integral-representation-idelic} by treating $\hat{E}_\alpha$ as if as $E_\alpha$, and replacing $\chi$ by $(|\cdot|\chi)^{-1} = (\kappa\omega\chi)^{-1}$. The surrogation of $E_\alpha$ for $\hat{E}_\alpha$ might eventually become legit, as we imagine that $\lim_{\alpha\to 1} E_\alpha$``$=$''$\1_{\prod_{\ell \ne p}\Z_\ell}$, and $\1_{\prod_{\ell \ne p}\Z_\ell}$ should be equal to its own Fourier transform. To us, the main difficulty is to find an appropriate way to precisely formulate the right hand side of the functional equation, where the method of $p$-adic analysis appears to have reached its limit. In this regard, we are curious whether we could have developed everything purely in terms of group schemes, as contrary to mixing it with $p$-adic analysis. We plan to study these questions in the near future.

	\paragraph{Acknowledgement.} The author is very thankful to Antonio Lei, for his constant encouragement and for reading the draft and providing many helpful suggestions on the organization and the grammar; to Yiannis Sakellaridis for the many inspirational discussions in Newark. We would also like to thank Professor John Coates for kindly pointing out an important application of a potential global functional equation in Iwasawa theory. Finally, we thank the referee for their careful reading and helpful suggestions.
	
	\bibliographystyle{alpha}
	\bibliography{fourier_v2}
	
	\vspace{5mm}
	
	\textsc{Department of Mathematics, Johns Hopkins University, 404 Krieger Hall, 3400 N. Charles Street, Baltimore, MD 21218, USA}
	
	\textit{Email address: }\texttt{lzhao39@jhu.edu}
\end{document}